\newcommand{\R}{\mathbb{R}}
\newcommand{\N}{\mathbb{N}}
\newcommand{\C}{\mathbb{C}}
\newcommand{\Z}{\mathbb{Z}}
\newcommand{\calO}{\mathcal{O}}
\newcommand{\calH}{\mathcal{H}}
\newcommand{\calL}{\mathcal{L}}
\newcommand{\spt}{\operatorname{spt}}
\newcommand{\Arg}{\operatorname{Arg}}
\newcommand{\interior}{\operatorname{int}}
\newcommand{\card}{\operatorname{card}}
\theoremstyle{plain}
\newtheorem{thm}[equation]{Theorem}
\newtheorem{lemma}[equation]{Lemma}
\newtheorem{proposition}[equation]{Proposition}
\newtheorem{cor}[equation]{Corollary}
\newtheorem{claim}[equation]{Claim}
\theoremstyle{definition}
\newtheorem{definition}[equation]{Definition}
\newtheorem{defThm}[equation]{Theorem and Definition}
\theoremstyle{remark}
\newtheorem{remark}[equation]{Remark}
\numberwithin{equation}{section}
\author{Tuomas Orponen}\thanks{The author is supported by the Finnish Centre of Excellence in Analysis and Dynamics Research.}
\title{Slicing Sets and Measures, and the Dimension of Exceptional Parameters}
\address{Department of Mathematics and Statistics, University of Helsinki, P.O.B. 68, FI-00014 Helsinki, Finland}
\email{tuomas.orponen@helsinki.fi}
\subjclass[2000]{28A50 (Primary); 28A75, 28A78, 28A80 (Secondary)}
\begin{document}

\begin{abstract} We consider the problem of slicing a compact metric space $\Omega$ with sets of the form $\pi_{\lambda}^{-1}\{t\}$, where the mappings $\pi_{\lambda} \colon \Omega \to \R$, $\lambda \in \R$, are \emph{generalized projections}, introduced by Yuval Peres and Wilhelm Schlag in 2000. The basic question is: assuming that $\Omega$ has Hausdorff dimension strictly greater than one, what is the dimension of the 'typical' slice $\pi_{\lambda}^{-1}\{t\}$, as the parameters $\lambda$ and $t$ vary. In the special case of the mappings $\pi_{\lambda}$ being orthogonal projections restricted to a compact set $\Omega \subset \R^{2}$, the problem dates back to a 1954 paper by Marstrand: he proved that for almost every $\lambda$ there exist positively many $t \in \R$ such that $\dim \pi_{\lambda}^{-1}\{t\} = \dim \Omega - 1$. For generalized projections, the same result was obtained 50 years later by J\"arvenp\"a\"a, J\"arvenp\"a\"a and Niemel\"a. In this paper, we improve the previously existing estimates by replacing the phrase 'almost all $\lambda$' with a sharp bound for the dimension of the exceptional parameters.
\end{abstract}

\maketitle

\section{Introduction}

\emph{Generalised projections} were introduced by Yuval Peres and Wilhelm Schlag in 2000: these are families of continuous mapping $\pi_{\lambda} \colon \Omega \to \R^{n}$, $\lambda \in Q$, where $\Omega$ is a compact metric space and $Q$ is an open set of parameters in $\R^{m}$. The projections $\pi_{\lambda}$ are required to satisfy certain conditions, see Definition \ref{projections}, which guarantee that they behave regularly with respect to $\lambda$ and are never too severely non-injective. As it was shown in \cite{PS}, these conditions are sufficient to produce results in the spirit of Marstrand's projection theorem, which were previously known only for orthogonal projections in $\R^{n}$.

Let us quickly review the classical theory related with orthogonal projections, restricting attention to $\R^{2}$. According to a 1954 result of Marstrand, see \cite{Mar}, any Borel set $B$ of dimension $\dim B = s \leq 1$ is projected into a set of dimension $s$ in almost all directions; if $\dim B > 1$, the projections typically have positive length. Since 1954, these results have been sharpened by examining the largest possible dimension of the set of \emph{exceptional} directions, that is, the directions for which the typical behaviour described by the theorem fails. Let $\rho_{\theta}(x) = x \cdot (\cos \theta,\sin \theta)$ denote the orthogonal projection onto the line spanned by $(\cos \theta, \sin \theta)$. Then we have the bounds
\begin{equation}\label{kaufman} \dim \{\theta \in [0,2\pi) : \dim \rho_{\theta}(B) < \dim B\} \leq \dim B, \qquad \dim B \leq 1, \end{equation} 
due to Kaufman, see \cite{Ka}, and
\begin{equation}\label{falconer} \dim \{\theta \in [0,2\pi) : \calL^{1}(\rho_{\theta}(B)) = 0\} \leq 2 - \dim B, \qquad \dim B > 1,  \end{equation} 
due to Falconer, see \cite{Fa}. Both estimates are known to be sharp. 

Orthogonal projections are a special case of the general formalism of Peres and Schlag, and all the results stated above -- along with their higher dimensional analogues -- follow from the theory in \cite{PS}. Besides orthogonal projections, Peres and Schlag provide multiple examples to demonstrate the wide applicability of their formalism. These examples include the mappings $\pi_{\lambda}(x) = |x - \lambda|^{2}$, $\lambda,x \in \R^{n}$, and the 'Bernoulli projections'
\begin{displaymath} \pi_{\lambda}(x) = \sum_{j = 1}^{\infty} x_{j}\lambda^{j}, \qquad x = (x_{1},x_{2},\ldots) \in \Omega = \{-1,1\}^{\N}, \end{displaymath} 
for $\lambda \in (0,1)$. Indeed, estimates similar to \eqref{kaufman} and \eqref{falconer} are obtained for these projections, and many more, in \cite{PS}.

In the field of geometric measure theory, Marstrand's projection theorem is not the only result involving orthogonal projections. Marstrand's theorem is certainly matched in fame by the \emph{Besicovitch-Federer projection theorem}, characterising rectifiability in $\R^{n}$ in terms of the behaviour of orthogonal projections. In the plane, this theorem states that a Borel set $B$ with positive and finite $1$-dimensional measure is purely unrectifiable, if and only if almost all of the sets $\rho_{\theta}(B)$ have zero length. Considering the success of Peres and Schlag's projections in generalizing Marstrand's theorem, it is natural to ask whether also the characterisation of Besicovitch-Federer would permit an analogue in terms of the generalized projections. This question was recently resolved by Hovila, J\"arvenp\"a\"a, J\"arvenp\"a\"a and Ledrappier: Theorem 1.2 in \cite{HJJL} essentially shows that orthogonal projections can be replaced by \emph{any} family of generalized projections in the theorem of Besicovitch-Federer.

There is a third classical result in geometric measure theory related intimately, though slightly covertly, to orthogonal projections. In his 1954 article mentioned above, Marstrand also studied the following question: given a Borel set $B \subset \R^{2}$ with $\dim B > 1$, what can be said of the dimension of the intersections $B \cap L$, where $L$ ranges over the lines of $\R^{2}$? Marstrand proved that in almost all directions there exist positively many lines intersecting $B$ in a set of dimension $\dim B - 1$. The multidimensional analogue of this result was obtained by Mattila first in \cite{Mat1} and later in \cite{Mat2} using a different technique: if $B \subset \R^{n}$ is a Borel set of dimension $\dim B > m$, then positively many translates of almost every $m$-codimensional subspace intersect $B$ in dimension $\dim B - m$. These results are easily formulated in terms of orthogonal projections. Once more restricting attention to the plane, the theorem of Marstrand can be stated as follows: given a Borel set $B \subset \R^{2}$ with $\dim B > 1$, we have
\begin{equation}\label{marstrand2} \calL^{1}(\{t \in \R : \dim [B \cap \rho_{\theta}^{-1}\{t\}] \geq \dim B - 1\}) > 0 \end{equation} 
for $\calL^{1}$ almost every $\theta \in [0,2\pi)$. Indeed, all lines of $\R^{2}$ are of the form $\rho_{\theta}^{-1}\{t\}$ for some $\theta \in [0,2\pi)$ and $t \in \R$. Inspecting \eqref{marstrand2}, one arrives at the following conjecture: let $J \subset \R$ be an open interval and let $\pi_{\lambda} \colon \Omega \to \R$ be a family of generalized projections. Then \eqref{marstrand2} holds with $\rho_{\theta}$ replaced with $\pi_{\lambda}$, for any Borel set $B \subset \Omega$ with $\dim B > 1$. This conjecture, and its higher dimensional analogue, was verified in the 2004 paper \cite{JJN} by J\"arvenp\"a\"a, J\"arvenp\"a\"a and Niemel\"a. 

To the best of our knowledge, no work has previously been done on obtaining a sharpened version of \eqref{marstrand2}. By a 'sharpened version' we mean a result, which would yield \eqref{marstrand2} not only for almost all $\theta \in [0,2\pi)$, but also give an estimate for the dimension of the set of exceptional parameters $\theta$ similar to \eqref{falconer}. It is easy to guess the correct analogue of \eqref{falconer} in our situation: first, note that if \eqref{marstrand2} holds for some $\theta \in [0,2\pi)$, then we automatically have $\calL^{1}(\rho_{\theta}(B)) > 0$. The estimate \eqref{falconer} is known sharp, which means that $\calL^{1}(\rho_{\theta}(B)) > 0$ may fail (for some particular set $B$, see Section \ref{sharpness} for references) for all parameters $\theta$ in a set of dimension $2 - \dim B$: thus, also \eqref{marstrand2} may fail for all parameters $\theta$ in a set of dimension $2 - \dim B$. The converse result is proven below: for any Borel set $B \subset \R^{2}$ with $\dim B > 1$, we have \eqref{marstrand2} for all parameters $\theta \in [0,2\pi) \setminus E$, where $\dim E \leq 2 - \dim B$. Inspired by the generalization due to J\"arvenp\"a\"a, J\"arvenp\"a\"a and Niemel\"a, all our estimates will also be couched in the formalism of Peres and Schlag.
Straightforward applications to the mappings $\pi_{\lambda}(x) = |\lambda - x|^{2}$ and $\pi_{\lambda}(x) = \sum x_{i}\lambda^{i}$ are presented in Sections \ref{sharpness} and \ref{further}.

\section{Definitions and a Result of Peres and Schlag}

We start by defining our central object of study, the projections $\pi_{\lambda}$:

\begin{definition}[The Projections $\pi_{\lambda}$]\label{projections} Let $(\Omega,d)$ be a compact metric space. Suppose that an open interval $J \subset \R$ parametrises a collection of continuous mappings $\pi_{\lambda} \colon \Omega \to \R$, $\lambda \in J$. These mappings, often referred to as \emph{projections}, are assumed to satisfy the following properties (see Remark \ref{ps} for a discussion on the origins of our assumptions):
\begin{itemize}
\item[(i)] For every compact subinterval $I \subset J$ and every $l \in \N$ there exist constants $C_{I,l} > 0$ such that
\begin{displaymath} |\partial^{l}_{\lambda} \pi_{\lambda}(x)| \leq C_{I,l} \end{displaymath}
for every $\lambda \in I$ and $x \in \Omega$.
\item[(ii)] Write
\begin{displaymath} \Phi_{\lambda}(x,y) := \begin{cases} \frac{\pi_{\lambda}(x) - \pi_{\lambda}(y)}{d(x,y)}, & \lambda \in J, \: x,y \in \Omega, \: x \neq y\\
											0, & \lambda \in J, \: x = y \in \Omega, \end{cases} \end{displaymath} 
and fix $\tau \in [0,1)$. For every compact subinterval $I \subset J$, there exist constants $\delta_{I,\tau}$ such that
\begin{equation}\label{transversality} |\Phi_{\lambda}(x,y)| \leq \delta_{I,\tau}d(x,y)^{\tau} \quad \Longrightarrow \quad |\partial_{\lambda}\Phi_{\lambda}(x,y)| \geq \delta_{I,\tau}d(x,y)^{\tau} \end{equation} 
for every $\lambda \in I$ and $x,y \in \Omega$. The projections $\pi_{\lambda}$ are then said to satisfy \emph{transversality of order $\tau$}. 
\item[(iii)] For every compact subinterval $I \subset J$, every $\tau > 0$ and every $l \in \N$ there exist constants $C_{I,l,\tau} > 0$ such that
\begin{equation}\label{regularity} |\partial^{(l)}_{\lambda}\Phi_{\lambda}(x,y)| \leq C_{I,l,\tau}d(x,y)^{-l\tau} \end{equation} 
for every $\lambda \in I$ and $x,y \in \Omega$. This property is called \emph{regularity of order $\tau$}.
\end{itemize}
\end{definition}
Under these hypotheses, our main result is the following
\begin{thm}\label{main} Let $(\pi_{\lambda})_{\lambda \in J} \colon \Omega \to \R$ be a family of projections satisfying (i), (ii) and (iii) of Definition \ref{projections} for some $\tau > 0$. Let $B \subset \Omega$ be a Borel set with $\dim B = s$ for some $1 < s < 2$. Then there exists a set $E \subset J$ such that $\dim E \leq 2 - s + \delta(\tau)$, and 
\begin{displaymath} \calL^{1}(\{t \in \R : \dim [B \cap \pi_{\lambda}^{-1}\{t\}] \geq s - 1 - \delta(\tau)\}) > 0, \qquad \lambda \in J \setminus E. \end{displaymath}
Here $\delta(\tau) > 0$ is a constant depending only on $\tau$, and $\delta(\tau) \to 0$ as $\tau \to 0$. If the requirements of Definition \ref{projections} are satisfied with $\tau = 0$ and $\calH^{s}(B) > 0$, then the assertions above hold with $\delta(\tau) = 0$.
\end{thm}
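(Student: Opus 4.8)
The plan is to run a Frostman–disintegration argument in the spirit of Marstrand and J\"arvenp\"a\"a--J\"arvenp\"a\"a--Niemel\"a, and to upgrade ``almost every $\lambda$'' to ``all $\lambda$ off a sharp-dimensional set'' by combining the absolute-continuity theorem of Peres and Schlag with a scale-by-scale exploitation of the transversality inequality \eqref{transversality}. Fix $s'<s$ close enough to $s$ that $s-s'$, together with all the $\tau$-dependent losses below, is absorbed into one $\delta(\tau)>0$ with $\delta(\tau)\to 0$ as $\tau\to0$; in the case $\tau=0$, $\calH^{s}(B)>0$ take instead $s'=s$. By Frostman's lemma fix a compactly supported measure $\mu$ with $\spt\mu\subset B$, $\mu(\Omega)>0$ and $\mu(B(x,r))\leq r^{s'}$ for all $x,r$, and set $\alpha:=s'-1-\delta(\tau)\in(0,s'-1)$. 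Whenever $\pi_{\lambda\ast}\mu\ll\calL^{1}$, write $f_{\lambda}$ for its density and disintegrate $\mu=\int\mu_{\lambda,t}\,f_{\lambda}(t)\,dt$, with $\mu_{\lambda,t}$ a probability measure carried by $\pi_{\lambda}^{-1}\{t\}$. Since $\dim(B\cap\pi_{\lambda}^{-1}\{t\})\geq\dim\spt\mu_{\lambda,t}$, and since $I_{\alpha}(\mu_{\lambda,t})<\infty$ forces $\dim\spt\mu_{\lambda,t}\geq\alpha$, it suffices to produce, for every $\lambda$ outside a set of dimension $\leq2-s+\delta(\tau)$, a set of $t$ of positive $\calL^{1}$-measure with $I_{\alpha}(\mu_{\lambda,t})<\infty$.

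Write the prospective exceptional set as $E=E_{1}\cup E_{2}$, where $E_{1}:=\{\lambda\in J:\pi_{\lambda\ast}\mu\not\ll\calL^{1}\}$ and $E_{2}$ is the set of $\lambda\notin E_{1}$ for which no such positive-measure set of $t$ exists. Since $s'>1$, the result of Peres and Schlag recalled in this section, applied to projections into $\R$, gives $\dim E_{1}\leq2-s'+\delta(\tau)$ (with no loss when $\tau=0$), and in fact some fractional Sobolev regularity of $f_{\lambda}$ off a set of the same dimension, which I would keep available for the next step. This is the term that makes the final bound sharp, by the Falconer-type examples of Section \ref{sharpness}: for those examples already $\calL^{1}(\pi_{\lambda}(B))=0$, hence $\pi_{\lambda\ast}\mu$ is singular, for a set of $\lambda$ of dimension $2-\dim B$.

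It remains to bound $\dim E_{2}$. The natural object is the sliced energy
\begin{displaymath}
\int_{\R}I_{\alpha}(\mu_{\lambda,t})\,f_{\lambda}(t)^{2}\,dt \;=\; \lim_{\varepsilon\to0}\frac{1}{2\varepsilon}\iint_{|\pi_{\lambda}(x)-\pi_{\lambda}(y)|<\varepsilon}\frac{d\mu(x)\,d\mu(y)}{d(x,y)^{\alpha}},
\end{displaymath}
finiteness of which (together with $f_{\lambda}\in L^{2}$) yields $I_{\alpha}(\mu_{\lambda,t})<\infty$ for $\calL^{1}$-a.e.\ $t$ in the positive-measure set $\{f_{\lambda}>0\}$. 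Integrating the right-hand side over $\lambda$ against Lebesgue measure on a compact $I\subset J$ and using \eqref{transversality} and \eqref{regularity} to bound $\frac{1}{2\varepsilon}\int_{I}\mathbf{1}_{|\pi_{\lambda}(x)-\pi_{\lambda}(y)|<\varepsilon}\,d\lambda\lesssim d(x,y)^{-1-O(\tau)}$ reproduces, via $\lesssim I_{\alpha+1+O(\tau)}(\mu)<\infty$, exactly the almost-everywhere statement of \cite{JJN}. The difficulty is that one cannot simply repeat this with a Frostman measure $\nu$ of exponent $\gamma<1$ in place of Lebesgue: the corresponding quantity $\frac{1}{2\varepsilon}\nu(\{\lambda\in I:|\pi_{\lambda}(x)-\pi_{\lambda}(y)|<\varepsilon\})\sim\varepsilon^{\gamma-1}d(x,y)^{-\gamma-O(\tau)}$ genuinely blows up as $\varepsilon\to0$, so the naive method gives nothing below dimension one. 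Instead one must stop the mollification at the scale under consideration, break the fibre condition into dyadic scales $r=2^{-k}$, and at each scale use \eqref{transversality} (monotonicity of $\lambda\mapsto\Phi_{\lambda}(x,y)$ where it is small) and \eqref{regularity} (a bound on how often $\Phi_{\lambda}(x,y)$ re-enters the small regime) to control the parameters along which $\pi_{\lambda}$ over-compresses a $\mu$-typical ball $B(x,2^{-k})$; the Sobolev regularity of $f_{\lambda}$ for $\lambda\notin E_{1}$ is what lets the compressed balls be the only obstruction. Reassembling the scales so that the $\tau$-losses add up only to a $\delta(\tau)\to0$ correction then pins $\dim E_{2}\leq2-s+\delta(\tau)$, and a Borel--Cantelli argument over the scales leaves, for $\lambda\notin E$, a set of $t$ of positive $\calL^{1}$-measure along which $\mu_{\lambda,t}$ obeys a Frostman bound of exponent $\alpha$. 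When $\tau=0$ the relevant oscillatory/transversality estimates are clean and one runs this (or a Fourier–energy variant \`a la Falconer) with $\delta(\tau)=0$ throughout.

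The step I expect to be the main obstacle is precisely the bound on $E_{2}$: turning the scale-discretized, transversality-driven estimate into a sharp dimension bound. The delicate points are (i) counting the oscillations of $\lambda\mapsto\Phi_{\lambda}(x,y)$ via \eqref{transversality} and \eqref{regularity} accurately enough that the accumulated $\tau$-losses assemble into a single $\delta(\tau)\to0$; (ii) summing the per-scale estimates over $x$ (against $\mu$) and over $k$ without a spurious logarithmic or power loss; and (iii) converting the scale-wise slice bounds into a set of genuinely good $t$ of positive Lebesgue measure, rather than merely positive $\pi_{\lambda\ast}\mu$-measure — which is exactly where the absolute continuity supplied by $\lambda\notin E_{1}$ is used once more.
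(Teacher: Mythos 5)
Your top-level architecture matches the paper's: a Frostman measure $\mu$ on $B$, the splitting $E=E_{1}\cup E_{2}$ with $E_{1}=\{\lambda:\pi_{\lambda\sharp}\mu\not\ll\calL^{1}\}$ controlled by Theorem \ref{sobolev}, and the reduction of $E_{2}$ to showing that $\int_{\R}I_{\alpha}(\mu_{\lambda,t})\,dt<\infty$ off a parameter set of dimension at most $2-s$. You also correctly diagnose why the naive route fails: integrating the mollified fibre integral against a Frostman measure $\nu$ of exponent $\gamma<1$ on the parameter side produces $\varepsilon^{\gamma-1}$ and blows up. But at exactly this point — which you yourself flag as ``the main obstacle'' — the proposal stops being a proof. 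The passage ``stop the mollification at the scale under consideration, break the fibre condition into dyadic scales, \dots, reassembling the scales \dots then pins $\dim E_{2}\leq 2-s+\delta(\tau)$'' names no mechanism by which scale-by-scale $L^{1}$-type estimates in $\lambda$ become a Hausdorff dimension bound on the divergent set. A Borel--Cantelli argument over scales can only give an almost-everywhere or measure-zero conclusion; to localize the bad set to dimension $2-s$ one needs, in addition to smallness of each scale's contribution in $L^{1}(d\lambda)$, quantitative control of its oscillation in $\lambda$ (derivative bounds), so that smallness on a $\operatorname{dim}>2-s$ net propagates to smallness everywhere. This is precisely the content of Peres and Schlag's Lemma \ref{series}, and supplying the two inputs it requires is where all the work of the paper lies.

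Concretely, the paper's proof of the missing step runs as follows, and none of it is present in your sketch. Since Fourier analysis is unavailable on the abstract space $\Omega$, one does not disintegrate $\mu$ over $\pi_{\lambda}$ directly but first lifts it to $\R^{2}$ by $\Psi_{\lambda}=(\partial_{\lambda}\pi_{\lambda},\pi_{\lambda})$, which is bi-H\"older by transversality and regularity, and slices $\Psi_{\lambda\sharp}\mu$ by the vertical projection $\rho_{2}$. The sliced energy integral is then dominated by the anisotropic quantity $\int|\rho_{1}(x)|^{s-2}|\widehat{\Psi_{\lambda\sharp}\mu}(x)|^{2}\,dx$; the region away from the vertical cone is controlled for \emph{every} $\lambda$ by $I_{s}(\mu)$, while the cone is decomposed into angular sectors $\mathcal{C}_{i}$ of aperture $2^{-i}$, yielding a series $\sum_{i}2^{i(3-s)}h_{i}(\lambda)$. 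The $L^{1}(I)$-decay $\|h_{i}\|_{1}\lesssim 2^{-i(r+1)}$ comes from the oscillatory-integral estimate of Lemma \ref{PSLemma} (a nontrivial adaptation of \cite[Lemma 4.6]{PS}, occupying the appendix), and the derivative bounds $\|h_{i}^{(l)}\|_{\infty}\lesssim 2^{il}$ come from computing the Fourier transform of $x\mapsto\varphi_{i}(\arg x)|x|^{s-2}$ via spherical harmonics and differentiating under the integral sign. Only then does Lemma \ref{series} deliver $\dim E_{2}\leq 2-s$, with the $\tau$-losses tracked through the choice of $L\asymp\tau^{-1/3}$. Without an argument playing the role of these three ingredients (the lift $\Psi_{\lambda}$, the $L^{1}$ and derivative estimates, and the power-series dimension lemma), your bound on $E_{2}$ is unsupported, so the proposal as written has a genuine gap at its central step.
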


\begin{remark}\label{ps} Throughout the paper, $\dim$ will always refer to Hausdorff dimension. Our definition of the projections $\pi_{\lambda}$ is a slightly specialized version of \cite[Definition 2.7]{PS}. The most notable strengthening in our hypotheses is that, in the original definition in \cite{PS}, the bound in (\ref{regularity}) is only assumed to hold \emph{under the condition that} $|\Phi_{\lambda}(x,y)| \leq \delta_{I,\tau}d(x,y)^{\tau}$, whereas we assume it for all $\lambda \in I$ and $x,y \in \Omega$. Second, Peres and Schlag also obtain results for projections $\pi_{\lambda}$ such that (\ref{regularity}) holds only for a finite number of $\lambda$-derivatives of the function $\Phi_{\lambda}$: our projections are $\infty$-regular in the language of \cite{PS}.

Inspecting (i) and (ii) above, one should note that the easiest way to establish (\ref{transversality}) and (\ref{regularity}) for all $\tau > 0$ is to establish them (with some constants) for $\tau = 0$: indeed, this is possible in all known (to the author, at least) 'geometric' applications of the projection formalism -- but not possible in \emph{all} applications. 
\end{remark}
A word on notation before we proceed. If $A,B > 0$ and $p_{1},\ldots,p_{k}$ are parameters, we write $A \lesssim_{p_{1},\ldots,p_{k}} B$, if there exists a finite constant $C > 0$, depending only on the parameters $p_{1},\ldots,p_{k}$, such that $A \leq CB$. The two-sided inequality $A \lesssim_{p_{1},\ldots,p_{k}} B \lesssim_{p_{1},\ldots,p_{k}} A$ is further abbreviated to $A \asymp_{p_{1},\ldots,p_{k}} B$. 

We now cite the parts of \cite[Theorem 2.8]{PS} that will be needed later:

\begin{thm}\label{sobolev} Let $\pi_{\lambda} \colon \Omega \to \R$, $\lambda \in J$, be a family of projections as in Definition \ref{projections}, satisfying (ii) and (iii) for some $\tau \in [0,1)$. Let $\mu$ be a Radon measure on $\Omega$ such that $I_{s}(\mu) := \iint d(x,y)^{-s} \, d\mu x \, d\mu y < \infty$ for some $s > 0$. Write $\mu_{\lambda} := \pi_{\lambda\sharp}\mu$.\footnote{Thus $\mu_{\lambda}(B) = \mu(\pi_{\lambda}^{-1}(B))$ for Borel sets $B \subset \R$.} Then
\begin{equation}\label{sobolevIneq} \int_{I} \|\mu_{\lambda}\|_{2,\gamma}^{2} \, d\lambda \lesssim_{I,\gamma} I_{s}(\mu), \qquad 0 < (1 + 2\gamma)(1 + a_{0}\tau) \leq s, \end{equation}
where $\|\mu_{\lambda}\|_{2,\gamma}^{2} := \int |t|^{2\gamma}|\widehat{\mu_{\lambda}}(t)|^{2} \, dt$ is the Sobolev-norm of $\mu_{\lambda}$ with index $\gamma \in \R$, and $a_{0} > 0$ is an absolute constant. Moreover, we have the estimate
\begin{displaymath} \dim \{\lambda \in J : \mu_{\lambda} \not\ll \calL^{1}\} \leq 2 - \frac{s}{1 + a_{0}\tau}. \end{displaymath}
\end{thm}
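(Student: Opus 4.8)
The plan is to follow the argument of Peres and Schlag, whose Theorem~2.8 the present statement reproduces. It has two parts: the Sobolev inequality \eqref{sobolevIneq}, and the dimension bound for $\{\lambda : \mu_\lambda \not\ll \calL^1\}$, the latter deduced from a weighted version of the former. For \eqref{sobolevIneq} I would start from the Fourier identity
\begin{equation*} |\widehat{\mu_\lambda}(t)|^2 = \iint e^{-2\pi i t(\pi_\lambda(x) - \pi_\lambda(y))} \, d\mu x \, d\mu y = \iint e^{-2\pi i t\, d(x,y)\, \Phi_\lambda(x,y)} \, d\mu x \, d\mu y, \end{equation*}
multiply by $|t|^{2\gamma}$, and integrate in $t \in \R$ and $\lambda$ over a compact subinterval $I \subset J$. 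Using Tonelli (the integrand $|t|^{2\gamma}|\widehat{\mu_\lambda}(t)|^2$ is nonnegative) together with Fubini on the finite measure space $I \times \Omega \times \Omega$ at each fixed $t$, the whole matter reduces to the single oscillatory estimate
\begin{equation*} \int_\R |t|^{2\gamma} \Bigl| \int_I e^{-2\pi i t\, d(x,y)\, \Phi_\lambda(x,y)} \, d\lambda \Bigr| \, dt \lesssim_{I,\gamma} d(x,y)^{-(1+2\gamma)(1+a_0\tau)}, \qquad x \ne y, \end{equation*}
for an absolute constant $a_0 > 0$: granting it, the same Fubini (now justified by this very bound) gives $\int_I \|\mu_\lambda\|_{2,\gamma}^2 \, d\lambda \lesssim_{I,\gamma} \iint d(x,y)^{-(1+2\gamma)(1+a_0\tau)} \, d\mu x \, d\mu y = I_{(1+2\gamma)(1+a_0\tau)}(\mu) \lesssim I_s(\mu)$ whenever $0 < (1+2\gamma)(1+a_0\tau) \le s$, using that $\spt\mu$ is bounded to pass to the larger exponent $s$.

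The proof of this oscillatory estimate is the technical heart, and the step I expect to be the main obstacle. Fix $x \ne y$, put $h := d(x,y)$ and $\varphi(\lambda) := \Phi_\lambda(x,y)$, so that the phase is $2\pi t h \varphi(\lambda)$. One splits $I$ according to whether $|\varphi(\lambda)| \le \delta_{I,\tau} h^\tau$ or not. On the ``transversal'' part, the implication \eqref{transversality} forces $|\varphi'(\lambda)| \ge \delta_{I,\tau} h^\tau$, so there $\varphi$ is piecewise monotone with a quantitative lower bound on its derivative, while the regularity bounds \eqref{regularity} control all higher $\lambda$-derivatives of $\varphi$ by $\lesssim h^{-l\tau}$; this is precisely the input needed to run van der Corput's lemma / repeated non-stationary-phase integration by parts and gain decay of the form $|\int_I e^{-2\pi i t h \varphi}\,d\lambda| \lesssim_N (|t| h^{1+\tau})^{-N} h^{-C_N\tau}$ on that part. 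The complementary set $\{|\varphi| > \delta_{I,\tau} h^\tau\}$ is, again by \eqref{transversality} and the derivative bounds, a union of boundedly many intervals on which either $\varphi$ is monotone with $|\varphi'| \gtrsim h^\tau$ or $|\varphi|$ stays bounded away from $0$; the same bound applies. Feeding these into the $t$-integral (splitting at $|t| \sim h^{-(1+\tau)}$ and taking $N$ large enough for convergence) and keeping careful track of all the $h^{-\tau}$ losses produces the power $h^{-(1+2\gamma)(1+a_0\tau)}$. Making this bookkeeping clean --- and in particular checking that $a_0$ can be taken absolute --- is the delicate part; everything around it is soft. The $\tau = 0$ case is strictly easier, with no $h^{-\tau}$ losses anywhere, and directly yields $\delta(\tau) = 0$.

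For the dimension bound, set $E := \{\lambda \in J : \mu_\lambda \not\ll \calL^1\}$ and suppose, for contradiction, that $\dim E > 2 - s/(1+a_0\tau)$. Since $J$ is a countable union of compact subintervals and Hausdorff dimension is countably stable, there is a compact $I \subset J$ with $\dim(E \cap I) > 2 - s/(1+a_0\tau)$; choose $u$ with $2 - s/(1+a_0\tau) < u < \dim(E \cap I)$, and by Frostman's lemma and inner regularity a compact $F \subset E \cap I$ carrying a Radon measure $\nu$ with $0 < \nu(F) < \infty$ and $\nu(B(\lambda,r)) \lesssim r^u$ for all $\lambda, r$. Now repeat the computation above with $\gamma = 0$ but with $d\lambda$ on the transversal window replaced by $d\nu\lambda$: transversality makes $\varphi$ a bi-Lipschitz coordinate there with $|\varphi'| \gtrsim h^\tau$, so the $\varphi$-preimage of an interval of length $\varepsilon$ is a $\lambda$-interval of length $\lesssim \varepsilon h^{-\tau}$, hence of $\nu$-measure $\lesssim (\varepsilon h^{-\tau})^u$ by the Frostman condition. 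Carrying this through yields the weighted estimate
\begin{equation*} \int_F \int_\R |\widehat{\mu_\lambda}(t)|^2 \, dt \, d\nu\lambda \lesssim_{I,\nu} I_{(1+a_0\tau)(2-u)}(\mu) \lesssim I_s(\mu) < \infty, \end{equation*}
the last two inequalities holding because $(1+a_0\tau)(2-u) < s$ and $\spt\mu$ is bounded. Splitting off the harmless low-frequency part $\int_{|t| \le 1} |\widehat{\mu_\lambda}(t)|^2 \, dt \le 2\mu(\Omega)^2$, we conclude $\widehat{\mu_\lambda} \in L^2(\R)$ --- hence $\mu_\lambda \ll \calL^1$, with an $L^2$ density --- for $\nu$-almost every $\lambda$. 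This contradicts $\nu(F) > 0$ and $F \subset E$, and therefore $\dim E \le 2 - s/(1+a_0\tau)$.
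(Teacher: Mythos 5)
Your reduction collapses the whole theorem onto the single estimate $\int_\R |t|^{2\gamma}\bigl|\int_I e^{-2\pi i t\,d(x,y)\Phi_\lambda(x,y)}\,d\lambda\bigr|\,dt \lesssim d(x,y)^{-(1+2\gamma)(1+a_0\tau)}$, and that estimate is false -- already in the model case of orthogonal projections in the plane with $\tau=0$. There transversality tells you $|\partial_\lambda\Phi_\lambda|\gtrsim 1$ only where $|\Phi_\lambda|$ is small; on the complementary set $\{|\Phi_\lambda|>\delta_{I,\tau}d(x,y)^\tau\}$ you claim ``the same bound applies'' because $|\Phi_\lambda|$ stays away from $0$, but largeness of the phase \emph{value} gives no decay whatsoever for a pure exponential: for $\pi_\lambda(x)=x\cdot(\cos\lambda,\sin\lambda)$ one has $\Phi_\lambda(x,y)=\cos(\lambda-\theta)$, the phase is stationary exactly where $|\Phi_\lambda|=1$, and stationary phase gives only $\bigl|\int_I e^{-ith\cos(\lambda-\theta)}\,d\lambda\bigr|\asymp (|t|h)^{-1/2}$ with $h=d(x,y)$. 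Then $\int_\R|t|^{2\gamma}(1+|t|h)^{-1/2}\,dt=\infty$ for every $\gamma\ge 0$, so the quantity you propose to bound is infinite and no choice of $a_0$ rescues it. The same defect propagates into your second half: after replacing $d\lambda$ by the Frostman measure $d\nu$ you can no longer integrate by parts at all, and the ``repeat the computation'' step again rests on a pointwise-in-$t$ bound that does not hold.

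This is precisely why Peres and Schlag do not work with sharp frequencies. Their proof (which this paper does not reproduce -- Theorem \ref{sobolev} is simply cited from \cite[Theorem 2.8]{PS}, and only the key kernel estimate, in a modified form, is proved in the Appendix as Lemma \ref{PSLemma}) first performs a Littlewood--Paley decomposition of the $t$-integral using smooth cutoffs $\eta$ supported in an annulus away from the origin. The $\lambda$-integrand then involves $\hat\eta(2^{j}r\Phi_\lambda)$ rather than $e^{-itr\Phi_\lambda}$: on the region $|\Phi_\lambda|\gtrsim r^\tau$ this factor is \emph{pointwise} rapidly decreasing, $\lesssim_q(1+2^jr^{1+\tau})^{-q}$, so no oscillation is needed there, and on the transversal region one changes variables $u=\Phi_\lambda$ (Lemma \ref{Intervals}) and exploits the vanishing moments $\int u^l\hat\eta(u)\,du=0$ via a Taylor expansion of the Jacobian factor to gain $(2^jr)^{-q}$ up to $r^{-C\tau}$ losses; summing $2^{j(1+2\gamma)}(1+2^jr^{1+A\tau})^{-q}$ over $j$ yields \eqref{sobolevIneq}, and the absolute continuity statement is then obtained from the dyadic pieces via \cite[Lemma 3.1]{PS} (the paper's Lemma \ref{series}) rather than your Frostman argument. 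Your Frostman route for the dimension bound is a legitimate alternative in spirit (it is how Falconer's bound is classically proved), but it too must be run against dyadic blocks or sublevel-set estimates for the rapidly decreasing kernel, not against $e^{-itr\Phi_\lambda}$ with absolute values taken for each fixed $t$.
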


\begin{definition}[The Mappings $\Psi_{\lambda}$]\label{psi} Given a family of projections as in Definition \ref{projections}, we define the mappings $\Psi_{\lambda} \colon \Omega \to \R^{2}$, $\lambda \in J$, by
\begin{displaymath} \Psi_{\lambda}(x) := (\partial_{\lambda}\pi_{\lambda}(x),\pi_{\lambda}(x)), \qquad x \in \Omega. \end{displaymath}
\end{definition}

\begin{remark}[H\"older continuity of $\Psi_{\lambda}$] Note that
\begin{displaymath} \frac{\Psi_{\lambda}(x) - \Psi_{\lambda}(y)}{d(x,y)} = (\partial_{\lambda}\Phi_{\lambda}(x,y),\Phi_{\lambda}(x,y)), \qquad x,y \in \Omega,\: x \neq y. \end{displaymath} 
Assuming that the projections $\pi_{\lambda}$ are transversal and regular of order $\tau \in [0,1)$, the inequalities (\ref{transversality}) and (\ref{regularity}) yield
\begin{displaymath} \left| \frac{\Psi_{\lambda}(x) - \Psi_{\lambda}(y)}{d(x,y)} \right| \gtrsim |\partial_{\lambda}\Phi_{\lambda}(x,y)| + |\Phi_{\lambda}(x,y)| \geq \delta_{I,\tau}d(x,y)^{\tau} \end{displaymath}
and 
\begin{displaymath} \left| \frac{\Psi_{\lambda}(x) - \Psi_{\lambda}(y)}{d(x,y)} \right| \lesssim \max\{C_{I,0,\tau},C_{I,1,\tau}\}d(x,y)^{-\tau} \end{displaymath} 
for all $\lambda$ in any compact subinterval $I \subset J$ and all $x,y \in \Omega$. In brief, the mapping $\Psi_{\lambda}$ satisfies bi-H\"older continuity in the form
\begin{equation}\label{holder} d(x,y)^{1 + \tau} \lesssim_{I,\tau} |\Psi_{\lambda}(x) - \Psi_{\lambda}(y)| \lesssim_{I,\tau} d(x,y)^{1 - \tau} \end{equation}
for $x,y \in \Omega$.
\end{remark}
We close this chapter by stating a result on 'slicing' any Radon measure $\mu$ on $\R^{2}$ with respect to a continuous function $\pi \colon \R^{2} \to \R$. For the technical details, we refer to \cite{Mat2} or \cite[Chapter 10]{Mat3}. 

\begin{defThm}[Sliced Measures]\label{slices} Let $\mu$ be a compactly supported Radon measure on $\R^{2}$. If $\rho \colon \R^{2} \to \R$ is any orthogonal projection, we may for $\calL^{1}$ almost every $t \in \R$ define the \emph{sliced measure} $\mu_{\rho,t}$ with the following properties:
\begin{itemize}
\item[(i)] $\spt \mu_{\rho,t} \subset \spt \mu \cap \rho^{-1}\{t\}$,
\item[(ii)] 
\begin{displaymath} \int \eta \, d\mu_{\rho,t} = \lim_{\delta \to 0} (2\delta)^{-1} \int_{\rho^{-1}(t - \delta,t + \delta)} \eta \, d\mu, \qquad \eta \in C(\R^{2}). \end{displaymath}
\end{itemize}
For every Borel set $B \subset \R$ and non-negative lower semicontinuous function $\eta$ on $\R^{2}$ we have the inequality
\begin{displaymath} \int_{B} \int \eta \, d\mu_{\rho,t} \, dt \leq \int_{\rho^{-1}(B)} \eta \, d\mu. \end{displaymath}
Moreover, equality holds, if $\rho_{\sharp}\mu \ll \calL^{1}$. In this case, taking $B = \{t : \nexists \,\mu_{\rho,t} \text{ or } \mu_{\rho,t} \equiv 0\}$ and $\eta \equiv 1$ yields
\begin{displaymath} \rho_{\sharp}\mu(B) = \int_{\rho^{-1}(B)} \, d\mu = \int_{B} \int d\mu_{\rho,t} \, dt = 0, \end{displaymath}
which shows that $\mu_{\rho,t}$ exists and is non-trivial for $\rho_{\sharp}\mu$ almost every $t \in \R$. 
\end{defThm}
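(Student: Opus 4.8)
The plan is to obtain $\mu_{\rho,t}$ from the disintegration of $\mu$ along $\rho$, renormalised so as to satisfy the Lebesgue-type identity in (ii). First I would invoke the disintegration theorem: since $\R^{2}$ and $\R$ are standard Borel spaces and $\rho$ is continuous, the compactly supported measure $\mu$ admits conditional measures along $\rho$ — a measurable family $\{\widetilde{\mu}_{t}\}_{t\in\R}$ of probability measures on $\R^{2}$, defined for $\rho_{\sharp}\mu$-almost every $t$, with $\spt\widetilde{\mu}_{t}\subset\rho^{-1}\{t\}$ and $\mu=\int\widetilde{\mu}_{t}\,d(\rho_{\sharp}\mu)(t)$. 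Testing this identity against the countably many members of a basis of $\R^{2}\setminus\spt\mu$ also gives $\spt\widetilde{\mu}_{t}\subset\spt\mu$ for $\rho_{\sharp}\mu$-almost every $t$. Writing $q$ for the density of the absolutely continuous part of $\rho_{\sharp}\mu$ with respect to $\calL^{1}$, I would then define $\mu_{\rho,t}:=q(t)\,\widetilde{\mu}_{t}$, and set $\mu_{\rho,t}:=0$ on the $\calL^{1}$-null set where $q(t)=0$ or $\widetilde{\mu}_{t}$ is undefined. Property (i) is then immediate from the support properties of $\widetilde{\mu}_{t}$.

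For (ii) I would appeal to Lebesgue's differentiation theorem. Fix $\eta\in C(\R^{2})$; since $\mu$ is compactly supported, only the restriction of $\eta$ to a fixed large ball matters, so $\eta$ may be taken from a separable space. With $G_{\eta}(s):=\int\eta\,d\widetilde{\mu}_{s}$, a bounded $\rho_{\sharp}\mu$-measurable function, one has $\int_{\rho^{-1}(t-\delta,t+\delta)}\eta\,d\mu=\int_{(t-\delta,t+\delta)}G_{\eta}\,d(\rho_{\sharp}\mu)$, and because the absolutely continuous part of the measure $G_{\eta}\cdot\rho_{\sharp}\mu$ has density $G_{\eta}q$, differentiation gives $(2\delta)^{-1}\int_{\rho^{-1}(t-\delta,t+\delta)}\eta\,d\mu\to G_{\eta}(t)q(t)=\int\eta\,d\mu_{\rho,t}$ for $\calL^{1}$-almost every $t$. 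The exceptional set a priori depends on $\eta$; the genuinely careful step is to remove this dependence — running the argument over a countable dense set of test functions, and then extending to all $\eta\in C(\R^{2})$ using that $(2\delta)^{-1}(\rho_{\sharp}\mu)\bigl((t-\delta,t+\delta)\bigr)\to q(t)<\infty$ bounds the relevant masses uniformly for small $\delta$ — so that $\eta\mapsto\int\eta\,d\mu_{\rho,t}$ becomes, for a single $\calL^{1}$-full set of $t$, a positive linear functional, i.e.\ a Radon measure, consistent with the construction above.

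Finally, the inequality and its refinement follow from the absolutely-continuous/singular decomposition. By (ii), for a fixed non-negative $\eta\in C(\R^{2})$ the map $t\mapsto\int\eta\,d\mu_{\rho,t}$ is $\calL^{1}$-almost everywhere the density of the absolutely continuous part of $\rho_{\sharp}(\eta\mu)$, so for every Borel set $B\subset\R$,
\begin{displaymath}
\int_{B}\int\eta\,d\mu_{\rho,t}\,dt=\bigl(\rho_{\sharp}(\eta\mu)\bigr)_{\mathrm{ac}}(B)\le\rho_{\sharp}(\eta\mu)(B)=\int_{\rho^{-1}(B)}\eta\,d\mu .
\end{displaymath}
Writing a non-negative lower semicontinuous $\eta$ as an increasing limit of non-negative continuous functions and applying monotone convergence (in $t$ and in the kernel) upgrades this to the stated generality. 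If $\rho_{\sharp}\mu\ll\calL^{1}$, then $\rho_{\sharp}(\eta\mu)\ll\rho_{\sharp}\mu\ll\calL^{1}$ for bounded $\eta\ge0$ — a $\rho_{\sharp}\mu$-null set pulls back under $\rho$ to a $\mu$-null, hence $\eta\mu$-null, set — so $\rho_{\sharp}(\eta\mu)$ has no singular part and the inequality becomes an equality, which the same limiting procedure preserves. Taking this equality with $\eta\equiv1$ and $B=\{t:\nexists\,\mu_{\rho,t}\text{ or }\mu_{\rho,t}\equiv0\}$ forces $\rho_{\sharp}\mu(B)=\int_{B}\int d\mu_{\rho,t}\,dt=0$, since the integrand vanishes wherever $\mu_{\rho,t}$ is defined and the rest of $B$ is $\calL^{1}$-null; thus $\mu_{\rho,t}$ exists and is non-trivial for $\rho_{\sharp}\mu$-almost every $t$. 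No step uses that $\rho$ is linear, so the construction applies verbatim to any continuous $\rho\colon\R^{2}\to\R$, as is needed when $\rho$ is later replaced by maps assembled from the $\pi_{\lambda}$.
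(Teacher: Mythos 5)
Your proof is correct, but note that the paper does not prove this statement at all: it is quoted from Mattila, with the technical details delegated to \cite{Mat2} and \cite[Chapter 10]{Mat3}. Mattila's construction goes in the opposite order from yours. For each $\varphi$ in a countable dense subset of $C^{+}(\spt\mu)$ he differentiates the Radon measure $A\mapsto\int_{\rho^{-1}(A)}\varphi\,d\mu$ with respect to $\calL^{1}$; off a single Lebesgue-null set all these derivatives exist and are finite, a $3\varepsilon$-argument extends the limits to all of $C(\R^{2})$, and the Riesz representation theorem then \emph{produces} $\mu_{\rho,t}$ as the limit in (ii). The inequality and its equality case are then exactly the statements that integrating the derivative of a measure over a Borel set recovers at most (respectively, exactly, in the absolutely continuous case) the measure of that set. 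You instead build $\mu_{\rho,t}=q(t)\widetilde{\mu}_{t}$ from the disintegration of $\mu$ along $\rho$ and only afterwards verify (ii) by Lebesgue differentiation. This is a legitimate alternative: it makes property (i), the identity $\rho_{\sharp}(\eta\mu)=G_{\eta}\cdot\rho_{\sharp}\mu$, and the identification of $t\mapsto\int\eta\,d\mu_{\rho,t}$ with the density of $\bigl(\rho_{\sharp}(\eta\mu)\bigr)_{\mathrm{ac}}$ essentially immediate, and your handling of the $\eta$-dependent exceptional sets and of the lower semicontinuous case by monotone approximation is exactly right. What it costs is an appeal to the (non-elementary) disintegration theorem on top of the differentiation theorem, which you still need anyway to obtain (ii) — so the disintegration is an extra layer rather than a substitute — and a small mismatch of conventions: your $\mu_{\rho,t}$ is \emph{defined} (as the zero measure) at some $t$ where the limit in (ii) fails or is infinite, e.g.\ at atoms of $\rho_{\sharp}\mu$. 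Since this happens only on a Lebesgue-null set, it is harmless for the theorem as stated, but it is worth being aware that the set $B=\{t:\nexists\,\mu_{\rho,t}\text{ or }\mu_{\rho,t}\equiv0\}$ in the paper refers to existence of the limit in (ii), not to your disintegration being defined.
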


\section{Preliminary Lemmas}

For the rest of the paper we will write $e_{1} = (1,0)$, $e_{2} = (0,1)$, $\rho_{1} := \rho_{e_{1}}$ and $\rho_{2} := \rho_{e_{2}}$. Thus $\rho_{1}(x,y) = x$ and $\rho_{2}(x,y) = y$ for $(x,y) \in \R^{2}$. Our first lemma is motivated by the following idea: if $\mu$ were a smooth function on $\R^{2}$, say $\mu \in C_{c}^{\infty}(\R^{2})$ (the subscript $c$ indicates compact support), then one may easily check that any slice $\mu_{t} := \mu_{\rho_{2},t}$, $t \in \R$, coincides with $(\mu\calH^{1})\llcorner L_{t}$, where $L_{t}$ is the line $L_{t} = \{(s,t) : s \in \R\}$, and $(\mu\calH^{1})\llcorner L_{t}$ is the measure defined by
\begin{displaymath} \int \varphi \, d(\mu\calH^{1})\llcorner L_{t} := \int_{\R} \varphi(s,t)\mu(s,t) \, ds \end{displaymath}
for $\varphi \in C(\R^{2})$.  Since an arbitrary Radon measure $\mu$ on $\R^{2}$ can be approximated by a family $(\mu_{\varepsilon})_{\varepsilon > 0}$ of smooth functions, one may ask whether also the measures $(\mu^{\varepsilon}\calH^{1})\llcorner L_{t}$ converge weakly to $\mu_{t}$ as $\varepsilon \searrow 0$. Below, we will prove that \textbf{if the functions $\mu_{\varepsilon}$ are chosen suitably}, then $\mu_{t} = \lim_{\varepsilon \to 0} (\mu_{\varepsilon}\calH^{1})\llcorner L_{t}$.

\begin{lemma} Let $Q := (-1/2,1/2) \times (-1/2,1/2) \subset \R^{2}$ be the unit square centered at the origin, and let $\chi_{\varepsilon}$ be the lower semicontinuous function $\chi_{\varepsilon}(x) := \varepsilon^{-2}\chi_{Q}(x/\varepsilon)$. Let $\mu$ be a compactly supported Radon measure on $\R^{2}$, and write $\mu_{\varepsilon} := \chi_{\varepsilon} \ast \mu$. Then
\begin{equation}\label{form1} \int_{\R^{2}} \eta \, d\mu_{t} = \lim_{\varepsilon \to 0} \int_{\R} \eta(s,t)\mu_{\varepsilon}(s,t) \, ds, \qquad \eta \in C(\R^{2}), \end{equation}
whenever $\mu_{t} := \mu_{\rho_{2},t}$ exists in the sense of Definition \ref{slices}. Moreover, the convergence is uniform on any compact family (in the $\sup$-norm topology) of functions $K \subset C(\R^{2})$.
\end{lemma}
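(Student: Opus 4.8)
The plan is to compute $\mu_{\varepsilon}$ explicitly, unwind the right-hand side of \eqref{form1} into an integral against $\mu$ by Fubini, and then compare it with $\int \eta \, d\mu_{t}$ using property (ii) of Definition~\ref{slices} as the bridge. Throughout, $t$ is fixed so that $\mu_{t}:=\mu_{\rho_{2},t}$ exists in the sense of Definition~\ref{slices}.

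First I would observe that, since $\chi_{\varepsilon}(x) = \varepsilon^{-2}\chi_{Q}(x/\varepsilon)$ equals $\varepsilon^{-2}$ on the open square of side $\varepsilon$ centred at the origin and vanishes elsewhere,
\begin{displaymath} \mu_{\varepsilon}(s,t) = \varepsilon^{-2}\,\mu\bigl(\{(u,v) : |s-u| < \varepsilon/2,\ |t-v| < \varepsilon/2\}\bigr); \end{displaymath}
in particular $\mu_{\varepsilon}$ is a bounded Borel function supported, for $\varepsilon \leq 1$ say, in a fixed compact set. Writing $\chi_{Q}$ as a product of two one-dimensional indicators and applying Tonelli's theorem (after splitting $\eta$ into positive and negative parts the integrand is nonnegative and bounded, and $\mu$ is finite), I get
\begin{displaymath} \int_{\R} \eta(s,t)\mu_{\varepsilon}(s,t)\,ds = \frac{1}{\varepsilon}\int_{\rho_{2}^{-1}(t-\varepsilon/2,\,t+\varepsilon/2)} A_{\varepsilon}(u,t)\,d\mu(u,v), \qquad A_{\varepsilon}(u,t) := \frac{1}{\varepsilon}\int_{u-\varepsilon/2}^{u+\varepsilon/2}\eta(s,t)\,ds. \end{displaymath}

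Next, by property (ii) of Definition~\ref{slices} with $\delta = \varepsilon/2$ we have $\int \eta \, d\mu_{t} = \lim_{\varepsilon \to 0}\varepsilon^{-1}\int_{\rho_{2}^{-1}(t-\varepsilon/2,\,t+\varepsilon/2)}\eta(u,v)\,d\mu(u,v)$, so it is enough to prove that
\begin{displaymath} \frac{1}{\varepsilon}\int_{\rho_{2}^{-1}(t-\varepsilon/2,\,t+\varepsilon/2)}\bigl[A_{\varepsilon}(u,t) - \eta(u,v)\bigr]\,d\mu(u,v) \longrightarrow 0 \quad \text{as } \varepsilon \to 0. \end{displaymath}
Fixing a compact $X \subset \R^{2}$ containing a fixed neighbourhood of $\spt \mu$ and letting $\omega$ be a modulus of continuity of $\eta$ on $X$, we have, on the relevant slab $\{|v - t| < \varepsilon/2\}$, both $|A_{\varepsilon}(u,t) - \eta(u,t)| \leq \omega(\varepsilon/2)$ (an average of $\eta(\cdot,t)$ over an interval of length $\varepsilon$) and $|\eta(u,t) - \eta(u,v)| \leq \omega(\varepsilon/2)$; hence the last integral is at most $2\,\omega(\varepsilon/2)\cdot \varepsilon^{-1}\mu(\rho_{2}^{-1}(t-\varepsilon/2,\,t+\varepsilon/2))$ in absolute value. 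Applying property (ii) once more, now with $\eta \equiv 1$ (legitimate for the compactly supported $\mu$ after inserting a cutoff, since $\spt \mu_{t} \subset \spt \mu$), shows $\varepsilon^{-1}\mu(\rho_{2}^{-1}(t-\varepsilon/2,\,t+\varepsilon/2)) \to \mu_{t}(\R^{2}) < \infty$; so this factor stays bounded while $\omega(\varepsilon/2) \to 0$, and \eqref{form1} follows. Note that this is precisely where the hypothesis ``$\mu_{t}$ exists'' is used.

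For the uniformity over a compact family $K \subset C(\R^{2})$, I would first restrict everything to the fixed compact set $X$: by Arzel\`a--Ascoli the restrictions of the members of $K$ to $X$ are equicontinuous and uniformly bounded, so a single modulus of continuity $\omega$ and a single sup-bound serve all $\eta \in K$, while the factor $\varepsilon^{-1}\mu(\text{slab})$ is independent of $\eta$. The only point requiring a little care is that the convergence $\varepsilon^{-1}\int_{\text{slab}}\eta(u,v)\,d\mu \to \int \eta\,d\mu_{t}$ must also be uniform over $K$; this is a standard consequence of the weak convergence of the uniformly-bounded-mass measures $\varepsilon^{-1}(\mu\llcorner\text{slab})$ to $\mu_{t}$, obtained by covering $K$ with finitely many sup-balls, invoking property (ii) at their centres, and absorbing the error on the remaining terms via the $\eta$-independent bounds on $\mu_{t}(\R^{2})$ and $\sup_{\varepsilon}\varepsilon^{-1}\mu(\text{slab})$. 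Beyond this bookkeeping I do not anticipate any real difficulty: the content of the lemma is the elementary identity for $\mu_{\varepsilon}$ together with property (ii) of Definition~\ref{slices}.
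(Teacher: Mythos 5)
Your proof is correct, and the opening computation (Fubini applied to the convolution, producing the slab integral of the running average $A_{\varepsilon}$) is exactly the paper's starting point. Where you diverge is in how the error term is killed. The paper first reduces to functions of the ``special form'' $\eta(x_{1},x_{2})=\eta(x_{1},t)$ and then, because the comparison function $\psi_{\varepsilon}=A_{\varepsilon}-\eta$ varies with $\varepsilon$, invokes the Banach--Steinhaus theorem to get a uniform bound on the functionals $\Lambda_{\varepsilon}(\psi)=\varepsilon^{-1}\int_{\mathrm{slab}}\psi\,d\mu$ before letting $\|\psi_{\varepsilon}\|_{\infty}\to 0$. You avoid both the reduction and the uniform boundedness principle by comparing $A_{\varepsilon}(u,t)$ directly with $\eta(u,v)$ via a modulus of continuity, and by controlling the mass factor $\varepsilon^{-1}\mu(\mathrm{slab})$ through property (ii) of Definition \ref{slices} applied to $\eta\equiv 1$ --- which is precisely where the hypothesis that $\mu_{t}$ exists enters, just as Banach--Steinhaus needs the existence of the limits to bound the orbits. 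Your route is the more elementary and arguably cleaner one; the paper's is more automatic but heavier. The same contrast persists in the uniformity claim: the paper runs Banach--Steinhaus and Arzel\`a--Ascoli on the family of functionals $\Gamma_{\varepsilon}$, whereas you use equicontinuity of $K$ to get a single modulus of continuity and then a finite $\sup$-norm net to upgrade pointwise convergence of $\varepsilon^{-1}\int_{\mathrm{slab}}\eta\,d\mu$ to uniform convergence, absorbing the error with the two $\eta$-independent mass bounds. Both arguments are complete; yours trades abstract machinery for explicit estimates and loses nothing.
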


\begin{proof} We first establish pointwise convergence, and then use the Arzelà-Ascoli theorem to verify the stronger conclusion. Assume that $\mu_{t}$ exists for some $t \in \R$. We start with a simple reduction. Namely, we observe that it suffices to prove \eqref{form1} only for functions $\eta$ with the special form $\eta(x_{1},x_{2}) = \eta(x_{1},t)$, $(x_{1},x_{2}) \in \R^{2}$. Indeed, if $\eta \in C(\R^{2})$ is arbitrary, we note that both sides of \eqref{form1} depend only on the values of $\eta$ on the line $L_{t}$: in particular, both sides of \eqref{form1} remain unchanged, if we replace $\eta$ by the function $\tilde{\eta} \in C^{+}(\R^{2})$ defined by $\tilde{\eta}(x_{1},x_{2}) = \eta(x_{1},t)$. The function $\tilde{\eta}$ has the 'special form'.

Fix a function $\eta \in C(\R^{2})$ of the 'special form'. 
Starting from the right hand side of \eqref{form1}, we compute
\begin{align}\label{form30} \int_{\R} & \eta(s,t)\mu_{\varepsilon}(s,t) \, ds = \int_{\R} \eta(s,t) \varepsilon^{-2} \int_{\R^{2}} \chi_{Q}\left(\frac{(s,t) - x}{\varepsilon}\right) \, d\mu x \, ds\notag\\
& = \varepsilon^{-1} \int_{\rho_{2}^{-1}(t - \varepsilon/2, t + \varepsilon/2)} \left(\varepsilon^{-1} \int_{x_{1} - \varepsilon/2}^{x_{1} + \varepsilon/2} \eta(s,t)\, ds \right) \, d\mu x. \end{align}
The domain $\rho_{2}^{-1}(t - \varepsilon/2,t + \varepsilon/2)$ results from the fact that the kernel $\chi_{Q}([(s,t) - x]/\varepsilon)$ is zero whenever the second coordinate of $x = (x_{1},x_{2})$ differs from $t$ by more than $\varepsilon/2$. On the other hand, if $|t - x_{2}| < \varepsilon/2$, we see that $\chi_{Q}([(s,t) - x]/\varepsilon) = 1$, if and only if $|s - x_{1}| < \varepsilon/2$. Next, we use the uniform continuity of $\eta$ on $\spt \mu$ and the 'special form' property to deduce that 
\begin{align}\label{form2} \sup & \left\{\left|\varepsilon^{-1} \int_{x_{1} - \varepsilon/2}^{x_{1} + \varepsilon/2} \eta(s,t) \, ds - \eta(x) \right| : x = (x_{1},x_{2}) \in \spt \mu \right\}\notag\\
& \leq \sup \left\{\varepsilon^{-1} \int_{x_{1} - \varepsilon/2}^{x_{1} + \varepsilon/2} |\eta(s,t) - \eta(x_{1},t)| \, ds : (x_{1},x_{2}) \in \spt \mu \right\} \to 0, \end{align}
as $\varepsilon \to 0$. We write $\| \cdot \|_{\infty}$ for the $L^{\infty}$-norm on $C(\spt \mu)$.  Let us consider the continuous linear functionals $\Lambda_{\epsilon} \colon (C(\spt \mu),\| \cdot \|_{\infty}) \to \R$, defined by
\begin{displaymath} \Lambda_{\varepsilon}(\psi) := \varepsilon^{-1}\int_{\rho_{2}^{-1}(t - \varepsilon/2,t + \varepsilon/2)} \psi \, d\mu, \qquad \psi \in C(\spt \mu). \end{displaymath}
Since $\mu_{t}$ exists, the orbits $\{\Lambda_{\varepsilon}(\psi) : \varepsilon > 0\}$ are bounded subsets of $\R$ for any $\psi \in C(\spt \mu)$. So, it follows from the Banach-Steinhaus theorem, see \cite[Theorem 2.5]{Ru}, that these functionals are uniformly bounded: there exists $C > 0$, independent of $\varepsilon > 0$, such that $|\Lambda_{\varepsilon}(\psi)| \leq C\|\psi\|_{\infty}$. We apply the bound with $\psi = \psi_{\varepsilon}$ defined by
\begin{displaymath} \psi_{\varepsilon}(x) = \varepsilon^{-1} \int_{x_{1} - \varepsilon/2}^{x_{1} + \varepsilon/2} \eta(s,t) \, ds - \eta(x), \quad x = (x_{1},x_{2}) \in \spt \mu. \end{displaymath}
Recalling \eqref{form2}, we have
\begin{align*} \limsup_{\varepsilon \to 0} & \left| \varepsilon^{-1} \int_{\rho_{2}^{-1}(t - \varepsilon/2, t + \varepsilon/2)} \left[\varepsilon^{-1} \int_{x_{1} - \varepsilon/2}^{x_{1} + \varepsilon/2} \eta(s,t) \, ds -  \eta(x) \right] d\mu x \right|\\
& = \limsup_{\varepsilon \to 0} |\Lambda_{\varepsilon}(\psi_{\varepsilon})| \leq C \cdot \limsup_{\varepsilon \to 0} \|\psi_{\varepsilon}\|_{\infty} \stackrel{\eqref{form2}}{=} 0, \end{align*}
which implies that
\begin{align*} \lim_{\varepsilon \to 0} \varepsilon^{-1} & \int_{\rho_{2}^{-1}(t - \varepsilon/2, t + \varepsilon/2)} \left(\varepsilon^{-1} \int_{x_{1} - \varepsilon/2}^{x_{1} + \varepsilon/2} \eta(s,t) \, ds \right) \, d\mu x\\
& = \lim_{\varepsilon \to 0} \varepsilon^{-1} \int_{\rho_{2}^{-1}(t - \varepsilon/2,t + \varepsilon/2)} \eta(x) \, d\mu x =: \int \eta \, d\mu_{t}. \end{align*}
The existence of the former limit is a consequence of this equation, and the \emph{a priori} information on the existence of the latter limit. Combined with \eqref{form30}, this finishes the proof of pointwise convergence in \eqref{form1}.

Next, we fix a compact family of functions $K \subset C(\R^{2})$, and demonstrate that the convergence is uniform on $K$. Let $B \subset \R^{2}$ be a closed ball large enough to contain the supports of all the measures $\mu_{\varepsilon}$, for $0 < \varepsilon \leq 1$, say. Consider the linear functionals
\begin{displaymath} \Gamma_{\varepsilon}(\psi) := \int_{\R} \psi(r,t)\mu_{\varepsilon}(r,t) \, dr, \qquad \psi \in C(B). \end{displaymath}
Since the functionals $\Gamma_{\varepsilon}$ and $\psi \mapsto \Gamma(\psi) := \int \psi \, d\mu_{t}$ vanish outside $C(B)$, it suffices to show that $\Gamma_{\epsilon} \to \Gamma$ uniformly on $K \cap C(B)$: in fact, we may and will assume that $K \subset C(B)$. This way, we may view the mappings $\Gamma_{\varepsilon}$ not only as a family of functionals on $C(B)$, but also as a family of continuous functions $(K,\|\cdot\|_{L^{\infty}(B)}) \to \R$. Above, we showed that $\Gamma_{\varepsilon}(\psi) \to \Gamma(\psi)$ for every $\psi \in C(B)$: thus, the orbits $\{\Gamma_{\varepsilon}(\psi) : \varepsilon > 0\}$ are bounded for every $\psi \in C(B)$ -- and for every $\psi \in K$, in particular. Applying the Banach-Steinhaus theorem again, we see that
\begin{equation}\label{form32} |\Gamma_{\varepsilon}(\psi)| \leq C\|\psi\|_{L^{\infty}(B)}, \qquad \psi \in C(B), \end{equation}
for some constant $C > 0$ independent of $\varepsilon$. This implies that the functions $\Gamma_{\varepsilon}$ are equicontinuous on $K$: if $\psi \in K$ and $\delta > 0$, we have $|\Gamma_{\varepsilon}(\psi) - \Gamma_{\varepsilon}(\eta)| = |\Gamma_{\varepsilon}(\psi - \eta)| \leq \delta$ as soon as $\|\psi - \eta\|_{L^{\infty}(B)} \leq \delta/C$, for any $0 < \varepsilon \leq 1$. We have now demonstrated that $\{\Gamma_{\varepsilon} \colon K \to \R : 0 < \varepsilon \leq 1\}$ is a pointwise bounded equicontinuous family of functions. By the Arzelà-Ascoli theorem, see \cite[Theorem A5]{Ru}, every sequence in $\{\Gamma_{\varepsilon} : 0 < \varepsilon \leq 1\}$ contains a uniformly convergent subsequence. According to our result on pointwise convergence, the only possible limit of any such sequence is the functional $\psi \mapsto \int \psi \, d\mu_{t}$. This finishes the proof.
\end{proof}

\begin{cor} Let $(\varphi_{\varepsilon})_{\varepsilon > 0}$ be a sequence of smooth test functions satisfying $\varphi_{\varepsilon} \geq \chi_{\varepsilon}$. Let $\mu$ be a compactly supported Radon measure on $\R^{2}$, and write $\tilde{\mu}_{\varepsilon} := \mu \ast \varphi_{\varepsilon}$. Then
\begin{displaymath} \iint_{\R^{2} \times \R^{2}} \eta(x;y) \, d\mu_{t} x \, d\mu_{t} y \leq \liminf_{\varepsilon \to 0} \iint_{\R \times \R} \eta((r,t);(s,t))\tilde{\mu}_{\varepsilon}(r,t)\tilde{\mu}_{\varepsilon}(s,t) \, dr \, ds \end{displaymath}
for all non-negative lower semicontinuous functions $\eta \colon \R^{2} \times \R^{2} \to \R$, and for all $t \in \R$ such that $\mu_{t} = \mu_{\rho_{2},t}$ exists.
\end{cor}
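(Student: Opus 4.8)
\emph{Proof proposal.} The plan is to reduce everything to the preceding Lemma. First one replaces the smoothed density $\tilde\mu_{\varepsilon}$ by the pointwise smaller density $\mu_{\varepsilon}=\chi_{\varepsilon}\ast\mu$, then one passes to the limit $\varepsilon\to 0$ for nice $\eta$ by applying the Lemma \emph{twice}, and finally one removes the continuity and compact‑support assumptions on $\eta$ by monotone approximation. The hypothesis $\varphi_{\varepsilon}\geq\chi_{\varepsilon}$ enters only through the trivial but crucial pointwise bound $\tilde\mu_{\varepsilon}=\mu\ast\varphi_{\varepsilon}\geq\mu\ast\chi_{\varepsilon}=\mu_{\varepsilon}\geq 0$, valid everywhere on $\R^{2}$ because $\varphi_{\varepsilon}\geq\chi_{\varepsilon}\geq 0$ and $\mu\geq 0$.

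\emph{Reductions.} Since $\eta\geq 0$, the bound $\tilde\mu_{\varepsilon}\geq\mu_{\varepsilon}$ shows it suffices to prove the asserted inequality with $\tilde\mu_{\varepsilon}$ replaced by $\mu_{\varepsilon}$. Next, any non‑negative lower semicontinuous $\eta$ on $\R^{2}\times\R^{2}$ is the pointwise increasing limit of a sequence $\eta_{k}\in C_{c}^{+}(\R^{2}\times\R^{2})$ (take Lipschitz inf‑convolutions $\inf_{w}(\eta(w)+k\,\mathrm{dist}(\cdot,w))$, truncated at level $k$ and multiplied by an increasing sequence of cut‑offs). By monotone convergence $\iint\eta\,d\mu_{t}\,d\mu_{t}=\sup_{k}\iint\eta_{k}\,d\mu_{t}\,d\mu_{t}$, while for each fixed $k$ the integrand with $\eta_{k}$ is dominated by the one with $\eta$; hence it is enough to establish, for every $\eta\in C_{c}^{+}(\R^{2}\times\R^{2})$ and every $t$ for which $\mu_{t}$ exists, the (in fact two‑sided) convergence
\begin{equation*}
\iint_{\R^{2}\times\R^{2}}\eta(x;y)\,d\mu_{t}x\,d\mu_{t}y=\lim_{\varepsilon\to 0}\iint_{\R\times\R}\eta((r,t);(s,t))\,\mu_{\varepsilon}(r,t)\,\mu_{\varepsilon}(s,t)\,dr\,ds .
\end{equation*}

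\emph{Iterating the Lemma.} Fix such an $\eta$ and $t$. By Tonelli the right‑hand double integral equals $\int_{\R}G_{\varepsilon}(r)\,\mu_{\varepsilon}(r,t)\,dr$, where $G_{\varepsilon}(r):=\int_{\R}\eta((r,t);(s,t))\,\mu_{\varepsilon}(s,t)\,ds$. The key observation is that the functions $z\mapsto\eta((r,t);z)$, $r\in\R$, form a \emph{compact} family $K\subset C(\R^{2})$: the map $r\mapsto\eta((r,t);\cdot)$ is continuous into $(C(\R^{2}),\|\cdot\|_{\infty})$ by the uniform continuity of $\eta$, and it is $\equiv 0$ for $r$ outside a compact interval. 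The uniform‑convergence clause of the Lemma therefore yields $G_{\varepsilon}\to G$ uniformly on $\R$, where $G(r):=\int\eta((r,t);z)\,d\mu_{t}z$; moreover $G\in C(\R)$ (again by uniform continuity of $\eta$ together with finiteness of $\mu_{t}$), so $G$ extends to a function in $C(\R^{2})$. Since the Lemma with $\eta\equiv 1$ gives $\int\mu_{\varepsilon}(r,t)\,dr\to\mu_{t}(\R^{2})<\infty$, the masses $\int\mu_{\varepsilon}(r,t)\,dr$ stay bounded, whence $\int_{\R}\bigl(G_{\varepsilon}-G\bigr)(r)\,\mu_{\varepsilon}(r,t)\,dr\to 0$; and a second application of the Lemma, now to the extension of $G$, gives $\int_{\R}G(r)\,\mu_{\varepsilon}(r,t)\,dr\to\int G\,d\mu_{t}=\iint\eta(x;y)\,d\mu_{t}x\,d\mu_{t}y$. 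Adding the two facts finishes the proof, since then
\begin{equation*}
\liminf_{\varepsilon\to 0}\iint\eta((r,t);(s,t))\,\tilde\mu_{\varepsilon}(r,t)\,\tilde\mu_{\varepsilon}(s,t)\,dr\,ds\ \geq\ \lim_{\varepsilon\to 0}\iint\eta((r,t);(s,t))\,\mu_{\varepsilon}(r,t)\,\mu_{\varepsilon}(s,t)\,dr\,ds\ =\ \iint\eta\,d\mu_{t}\,d\mu_{t},
\end{equation*}
and the general lower semicontinuous case follows by taking the supremum over the approximants $\eta_{k}$.

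\emph{Main obstacle.} The only real difficulty is bridging the single‑measure statement of the Lemma and the bilinear quantity appearing here; the device that makes this routine is precisely the uniform‑convergence‑on‑compact‑families refinement of the Lemma, which lets one view the inner integral $G_{\varepsilon}(r)$ as converging to $G(r)$ uniformly in the outer parameter $r$, so that a second, entirely standard application of the Lemma disposes of the outer integral. The points deserving a moment's care are the verification that $\{\eta((r,t);\cdot):r\in\R\}$ is genuinely compact in the sup‑norm (a consequence of the compact support and uniform continuity of $\eta$, which is why one first reduces to $\eta\in C_{c}^{+}$) and the observation that the needed inequality direction is exactly what $\varphi_{\varepsilon}\geq\chi_{\varepsilon}$ supplies.
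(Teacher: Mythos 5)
Your proposal is correct and follows essentially the same route as the paper: reduce to $\eta \in C_{c}^{+}$ by approximation from below, replace $\tilde{\mu}_{\varepsilon}$ by the pointwise smaller $\mu_{\varepsilon} = \mu \ast \chi_{\varepsilon}$, and then apply the preceding lemma twice, using its uniform-convergence-on-compact-families clause to handle the inner integral uniformly in the outer variable. The only (immaterial) difference is bookkeeping: the paper controls the outer integral via dominated convergence in $d\mu_{t}y$ plus the Banach--Steinhaus bound \eqref{form32}, whereas you extend the limit function $G$ to $C(\R^{2})$ and invoke the lemma a second time together with boundedness of the masses $\int \mu_{\varepsilon}(r,t)\,dr$.
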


\begin{proof} Assume that $\mu_{t}$ exists. Approximating from below, it suffices to prove the inequality for continuous compactly supported functions $\eta \colon \R^{2} \times \R^{2} \to \R$. For such $\eta$, the plan is to first prove equality with $\tilde{\mu}_{\varepsilon}$ replaced by $\mu_{\varepsilon} = \mu \ast \chi_{\varepsilon}$, and then simply apply the estimate $\mu_{\varepsilon} \leq \tilde{\mu}_{\varepsilon}$.

Fix $\eta \in C(\R^{2} \times \R^{2})$. It follows immediately from the previous lemma that
\begin{displaymath} \iint_{\R^{2} \times \R^{2}} \eta(x;y) \, d\mu_{t}x \, d\mu_{t}y = \int_{\R^{2}} \lim_{\varepsilon \to 0} \int_{\R} \eta((r,t);y)\mu_{\varepsilon}(r,t) \, dr \, d\mu_{t} y. \end{displaymath}
Moreover, the convergence of the inner integrals is uniform in $\epsilon$ on the compact family of functions $K = \{\eta(\cdot \, ; y) : y \in \spt \mu\} \subset C(\R^{2})$. Hence, the numbers 
\begin{displaymath} \left|\int_{\R} \eta((r,t);y)\mu_{\varepsilon}(r,t) \, dr \right|, \qquad y \in \spt \mu, \end{displaymath}
are uniformly bounded by a constant independent of $\epsilon$. This means that the use of the dominated convergence theorem is justified: 
\begin{equation}\label{form31} \iint_{\R^{2} \times \R^{2}} \eta(x;y) \, d\mu_{t}x \, d\mu_{t}y = \lim_{\varepsilon \to 0} \int_{\R^{2}}\int_{\R}\eta((r,t);y)\mu_{\varepsilon}(r,t) \, dr \, d\mu_{t}y. \end{equation}
We may now estimate as follows:
\begin{align} & \left| \iint_{\R^{2} \times \R^{2}} \eta(x;y) \, d\mu_{t}x \, d\mu_{t}y - \iint_{\R \times \R} \eta((r,t);(s,t))\mu_{\varepsilon}(r,t)\mu_{\varepsilon}(s,t) \, dr \, ds \right|\notag\\
& \leq \left| \iint_{\R^{2} \times \R^{2}}\eta(x;y) \, d\mu_{t}x\,d\mu_{t}y - \int_{\R^{2}}\int_{\R} \eta((r,t);y)\mu_{\varepsilon}(r,t) \, dr \, d\mu_{t}y \right|\notag\\
&\label{form33} \qquad + \left|\int_{\R} \left[ \int_{\R^{2}} \eta((r,t);y) \, d\mu_{t}y - \int_{\R} \eta((r,t);(s,t))\mu_{\varepsilon}(s,t) \,ds \right]\mu_{\varepsilon}(r,t) \, dr \right|  \end{align}
As $\varepsilon \to 0$, the first term tends to zero according to \eqref{form31}. To see that the second term vanishes as well, we need to apply the previous lemma again. Namely, we first observe that the outer integration (with respect to $r$) can be restricted to some compact interval $[-R,R]$. The family of continuous mappings $\{\eta((r,t); \cdot) \colon \R^{2} \to \R : r \in [-R,R]\}$ is then compact, so the previous lemma implies that
\begin{displaymath} \int_{\R} \eta((r,t);(s,t))\mu_{\varepsilon}(s,t) \, ds \to \int_{\R^{2}} \eta((r,t);y) \, d\mu_{t} y \end{displaymath}
uniformly with respect to $r \in [-R,R]$, as $\varepsilon \to 0$. An application of \eqref{form32} then shows that the term on line \eqref{form33} converges to zero as $\varepsilon \to 0$. As we mentioned at the beginning of the proof, the assertion of the corollary now follows from the inequality $\mu_{\varepsilon} \leq \tilde{\mu}_{\varepsilon}$.

\end{proof}

The corollary will soon be used to prove an inequality concerning the energies of sliced measures. First, though, let us make a brief summary on Fourier transforms of measures on $\R^{n}$. If $\mu$ is a finite Borel measure on $\R^{n}$, then its Fourier transform $\hat{\mu}$ is, by definition, the complex function
\begin{displaymath} \hat{\mu}(x) = \int_{\R^{n}} e^{-ix \cdot y} \, d\mu y, \qquad x \in \R^{n}. \end{displaymath}
It is well-known, see \cite[Lemma 12.12]{Mat3}, that the $s$-energy $I_{s}(\mu)$ of $\mu$ can be expressed in terms of the Fourier transform:
\begin{equation}\label{s-energy} I_{s}(\mu) := \iint \frac{d\mu x \, d\mu y}{|x - y|^{s}} = c_{s,n}\int_{\R^{n}} |x|^{s - n}|\hat{\mu}(x)|^{2} \, dx, \qquad 0 < s < n. \end{equation}
This will be applied with $n = 1$ below.

\begin{lemma} Let $\mu$ be a compactly supported Radon measure on $\R^{2}$. Then, with $\mu_{t}$ as in the previous lemma, we have
\begin{displaymath} \int_{\R} I_{d - 1}(\mu_{t}) \, dt \lesssim_{d} \int_{\R^{2}} |\rho_{1}(x)|^{d - 2}|\hat{\mu}(x)|^{2} \, dx, \quad 1 < d < 2. \end{displaymath}
\end{lemma}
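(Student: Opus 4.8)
The plan is to feed the Riesz kernel into the previous corollary and then integrate over $t$. Concretely, I would apply the corollary to the non-negative lower semicontinuous function $\eta(x;y) := |\rho_{1}(x) - \rho_{1}(y)|^{-(d-1)}$ (it is the increasing limit of the continuous functions $(|\rho_{1}(x)-\rho_{1}(y)| + 1/n)^{-(d-1)}$, hence lower semicontinuous). For every $t$ such that $\mu_{t} = \mu_{\rho_{2},t}$ exists we have $\spt \mu_{t} \subset L_{t}$, so $|\rho_{1}(x)-\rho_{1}(y)| = |x-y|$ on $\spt\mu_{t} \times \spt\mu_{t}$, and therefore $\iint \eta(x;y)\,d\mu_{t}x\,d\mu_{t}y = I_{d-1}(\mu_{t})$. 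Thus the corollary gives
\[
I_{d-1}(\mu_{t}) \leq \liminf_{\varepsilon\to 0} F_{\varepsilon}(t), \qquad F_{\varepsilon}(t) := \iint_{\R\times\R} |r-s|^{-(d-1)}\tilde{\mu}_{\varepsilon}(r,t)\tilde{\mu}_{\varepsilon}(s,t)\,dr\,ds,
\]
and Fatou's lemma reduces the assertion to showing $\liminf_{\varepsilon\to 0}\int_{\R} F_{\varepsilon}(t)\,dt \lesssim_{d} \int_{\R^{2}}|\rho_{1}(x)|^{d-2}|\hat{\mu}(x)|^{2}\,dx$.

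For the approximating functions I would make a convenient, $\varepsilon$-uniform choice: since $\chi_{Q}$ is bounded with compact support, fix one $\varphi \in C_{c}^{\infty}(\R^{2})$ with $\varphi \geq \chi_{Q}$ and put $\varphi_{\varepsilon}(x) := \varepsilon^{-2}\varphi(x/\varepsilon)$, so $\varphi_{\varepsilon} \geq \chi_{\varepsilon}$ and $\tilde{\mu}_{\varepsilon} := \mu\ast\varphi_{\varepsilon}$ qualifies in the corollary; moreover $\widehat{\tilde{\mu}_{\varepsilon}}(x) = \hat{\mu}(x)\hat{\varphi}(\varepsilon x)$ with $|\hat{\varphi}(\varepsilon x)| \leq \|\varphi\|_{L^{1}} =: C_{0}$ for every $x$. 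Next I compute $\int_{\R} F_{\varepsilon}(t)\,dt$ by Fourier analysis. For fixed $t$ and $\varepsilon$, the slice $g_{\varepsilon,t} := \tilde{\mu}_{\varepsilon}(\cdot,t)$ lies in $C_{c}^{\infty}(\R)$, and $F_{\varepsilon}(t)$ is precisely the $(d-1)$-energy of the finite measure $g_{\varepsilon,t}(r)\,dr$ on $\R$; since $0 < d-1 < 1$, formula \eqref{s-energy} with $n=1$ gives $F_{\varepsilon}(t) \asymp_{d} \int_{\R} |\xi|^{d-2}|\widehat{g_{\varepsilon,t}}(\xi)|^{2}\,d\xi$, where $\widehat{g_{\varepsilon,t}}$ is the partial Fourier transform of $\tilde{\mu}_{\varepsilon}$ in the first variable. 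Integrating in $t$ (Tonelli applies, all integrands being non-negative) and applying Plancherel's theorem in $t$ — the $t$-Fourier transform of $t \mapsto \widehat{g_{\varepsilon,t}}(\xi)$ being $\zeta \mapsto \widehat{\tilde{\mu}_{\varepsilon}}(\xi,\zeta)$ — one arrives at
\[
\int_{\R} F_{\varepsilon}(t)\,dt \asymp_{d} \int_{\R^{2}} |\xi|^{d-2}|\widehat{\tilde{\mu}_{\varepsilon}}(\xi,\zeta)|^{2}\,d\xi\,d\zeta = \int_{\R^{2}} |\rho_{1}(x)|^{d-2}|\widehat{\tilde{\mu}_{\varepsilon}}(x)|^{2}\,dx \leq C_{0}^{2}\int_{\R^{2}}|\rho_{1}(x)|^{d-2}|\hat{\mu}(x)|^{2}\,dx,
\]
which is the required $\varepsilon$-uniform bound; combined with the reduction above, this proves the lemma (the case of an infinite right-hand side being trivial).

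The main obstacle — really the only point needing care — is the juggling of Fourier transforms in the middle step: one must check that each horizontal slice $\tilde{\mu}_{\varepsilon}(\cdot,t)$ is smooth and compactly supported, so that the one-dimensional identity \eqref{s-energy} and Plancherel's theorem genuinely apply to it, and that the order of the $t$- and $\xi$-integrations may be exchanged, which is legitimate here only because every integrand in sight is non-negative. The remaining ingredients — the corollary, Fatou's lemma, and the uniform bound $|\hat{\varphi}(\varepsilon x)| \leq \|\varphi\|_{L^{1}}$ — are used essentially verbatim.
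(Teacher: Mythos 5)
Your proposal is correct and follows essentially the same route as the paper: apply the corollary with the Riesz kernel (your $|\rho_{1}(x)-\rho_{1}(y)|^{-(d-1)}$ agrees with the paper's $|x-y|^{1-d}$ on each horizontal slice, so the two choices are interchangeable), use Fatou, convert the slice energies via \eqref{s-energy} with $n=1$, apply Plancherel in $t$, and finish with the $\varepsilon$-uniform bound $|\widehat{\varphi_{\varepsilon}}| \leq \|\varphi_{\varepsilon}\|_{L^{1}}$. The only cosmetic difference is your explicit rescaled choice $\varphi_{\varepsilon}(x)=\varepsilon^{-2}\varphi(x/\varepsilon)$ versus the paper's normalization $\|\varphi_{\varepsilon}\|_{L^{1}}\leq 2$, which changes nothing.
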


\begin{proof} Choose a family of test functions $(\varphi_{\varepsilon})_{\varepsilon > 0}$ such that $\chi_{\varepsilon} \leq \varphi_{\varepsilon}$, and $\|\varphi_{\varepsilon}\|_{L^{1}(\R^{2})} \leq 2$. Applying the previous corollary with $\tilde{\mu}_{\varepsilon} = \mu \ast \varphi_{\varepsilon}$ and $\eta(x,y) = |x - y|^{1 - d}$, we estimate
\begin{align*} I_{d - 1}(\mu_{t}) & = \iint_{\R^{2} \times \R^{2}} |x - y|^{1 - d} \, d\mu_{t}x \, d\mu_{t}y\\
& \leq \liminf_{\varepsilon \to 0} \iint_{\R \times \R} |r - s|^{1 - d} \tilde{\mu}_{\varepsilon}(r,t)\tilde{\mu}_{\varepsilon}(s,t) \, dr \, ds.  \end{align*}
Integrating with respect to $t \in \R$, applying (\ref{s-energy}) and using Plancherel yields
\begin{align*} \int_{\R} I_{d - 1}(\mu_{t}) \, dt & \lesssim_{d} \liminf_{\varepsilon \to 0} \int_{\R} \int_{\R} |s|^{d - 2} \left|\int_{\R} e^{irs} \tilde{\mu}_{\varepsilon}(r,t) \, dr \right|^{2} \, ds \, dt\\
& = \liminf_{\varepsilon \to 0}\int_{\R} |s|^{d - 2} \int_{\R} \left| \int_{\R} e^{irs}\tilde{\mu}_{\varepsilon}(r,t) \, dr\right|^{2} \, dt \, ds\\
& \asymp \liminf_{\varepsilon \to 0} \int_{\R} |s|^{d - 2} \int_{\R} \left| \int_{\R} e^{itu} \int_{\R} e^{irs} \tilde{\mu}_{\varepsilon}(r,u) \, dr \, du \right|^{2} \, dt \, ds\\
& = \liminf_{\varepsilon \to 0} \iint_{\R \times \R} |s|^{d - 2} \left|\iint_{\R \times \R} e^{i(s,t) \cdot (r,u)} \tilde{\mu}_{\varepsilon}(r,u) \, dr \, du \right|^{2} \, ds \, dt\\
& = \liminf_{\varepsilon \to 0} \int_{\R^{2}} |\rho_{1}(x)|^{d - 2} |\widehat{\tilde{\mu}_{\varepsilon}}(x)|^{2} \, dx\\
& = \liminf_{\varepsilon \to 0} \int_{\R^{2}} |\rho_{1}(x)|^{d - 2} |\widehat{\varphi_{\varepsilon}}(x)|^{2}|\hat{\mu}(x)|^{2} \, dx\\
& \lesssim \liminf_{\varepsilon \to 0} \int_{\R^{2}} |\rho_{1}(x)|^{d - 2} |\hat{\mu}(x)|^{2} \, dx\\
& = \int_{\R^{2}} |\rho_{1}(x)|^{d - 2} |\hat{\mu}(x)|^{2} \, dx, \end{align*} 
as claimed.
\end{proof}

Our last third lemma concerns the divergent set of certain parametrised power series. The result is due to Peres and Schlag, and the proof can be found in \cite{PS}:

\begin{lemma}\label{series} Let $U \subset \R$ be an open interval, and let $(h_{j})_{j \in \N} \in C^{\infty}(U)$. Suppose that there exist finite constants $B > 1$, $R > 1$, $C > 0$, $L \in \N$ and $C_{l} > 0$, $l \in \{1,\ldots,L\}$, such that $\|h_{j}^{(l)}\|_{\infty} \leq C_{l}B^{jl}$ and $\|h_{j}\|_{1} \leq CR^{-j}$, for $j \in \N$ and $1 \leq l \leq L$. Then, if $1 \leq \tilde{R} < R$ and $\alpha \in (0,1)$ is such that $B^{\alpha}\tilde{R}^{\alpha/L} \leq R/\tilde{R} \leq B\tilde{R}^{1/L}$, we have the estimate
\begin{displaymath} \dim \left\{ \lambda \in U : \sum_{j \in \N} \tilde{R}^{j}|h_{j}(\lambda)| = \infty\right\} \leq 1 - \alpha. \end{displaymath}
\end{lemma}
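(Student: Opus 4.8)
The plan is to write the divergence set as a $\limsup$ of sublevel sets of the individual functions $h_{j}$, and then to cover each sublevel set efficiently by playing the $L^{1}$-decay $\|h_{j}\|_{1}\le CR^{-j}$ against the derivative bound $\|h_{j}^{(L)}\|_{\infty}\le C_{L}B^{jL}$ via a polynomial interpolation inequality. Since Hausdorff dimension is countably stable, I would fix a compact subinterval $I_{0}\subset U$, bound the dimension of the divergence set intersected with $I_{0}$, and then exhaust $U$ by such intervals. For the reduction to a $\limsup$ set, put $\eta_{j}:=(j^{2}\tilde R^{j})^{-1}$ and $A_{j}:=\{\lambda\in I_{0}:|h_{j}(\lambda)|>\eta_{j}\}$. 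If $\lambda\in I_{0}$ lies in only finitely many $A_{j}$, then $\tilde R^{j}|h_{j}(\lambda)|\le j^{-2}$ for all large $j$, so $\sum_{j}\tilde R^{j}|h_{j}(\lambda)|<\infty$; hence $\{\lambda\in I_{0}:\sum_{j}\tilde R^{j}|h_{j}(\lambda)|=\infty\}\subset\limsup_{j}A_{j}=\bigcap_{N}\bigcup_{j\ge N}A_{j}$. It therefore suffices to cover each $A_{j}$ by $N_{j}$ intervals of length $\delta_{j}$ with $\delta_{j}\to 0$ and $\sum_{j}N_{j}\delta_{j}^{t}<\infty$ for every $t>1-\alpha$: a routine computation with $\mathcal{H}^{t}_{\sup_{j\ge N}\delta_{j}}(\bigcup_{j\ge N}A_{j})\le\sum_{j\ge N}N_{j}\delta_{j}^{t}\to 0$ then forces $\mathcal{H}^{t}(\limsup_{j}A_{j})=0$, so $\dim(\limsup_{j}A_{j})\le t$ for all $t>1-\alpha$.

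For the covering step I would invoke the elementary interpolation inequality: for $f\in C^{L}$ on an interval $I$ of length $\delta$,
\begin{displaymath} \|f\|_{L^{\infty}(I)}\lesssim_{L}\delta^{-1}\|f\|_{L^{1}(I)}+\delta^{L}\|f^{(L)}\|_{L^{\infty}(I)}, \end{displaymath}
which follows by subtracting a degree-$(L-1)$ Taylor polynomial of $f$ and applying a Nikolskii-type estimate for polynomials on $I$. Partition $I_{0}$ into pieces $I$ of length $\delta_{j}$, to be chosen below. If $\sup_{I}|h_{j}|>\eta_{j}$, the inequality and the hypothesis give $\eta_{j}\lesssim_{L}\delta_{j}^{-1}\|h_{j}\|_{L^{1}(I)}+\delta_{j}^{L}C_{L}B^{jL}$; choosing $\delta_{j}\asymp_{L}\eta_{j}^{1/L}B^{-j}$ makes the second term at most half of $\eta_{j}$, leaving $\|h_{j}\|_{L^{1}(I)}\gtrsim_{L}\delta_{j}\eta_{j}$. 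Since the pieces $I$ are disjoint and $\sum_{I}\|h_{j}\|_{L^{1}(I)}\le\|h_{j}\|_{L^{1}(U)}\le CR^{-j}$, the number $N_{j}$ of pieces meeting $A_{j}$ satisfies $N_{j}\lesssim_{L}R^{-j}\delta_{j}^{-1}\eta_{j}^{-1}$, and $\delta_{j}\to 0$ since $B>1$.

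It remains to sum. Substituting $\delta_{j}\asymp\eta_{j}^{1/L}B^{-j}$ and $\eta_{j}^{-1}=j^{2}\tilde R^{j}$ gives, up to a fixed power of $j$,
\begin{displaymath} N_{j}\delta_{j}^{t}\lesssim_{L,t} R^{-j}B^{j(1-t)}\eta_{j}^{-1-(1-t)/L}\asymp j^{2(1+(1-t)/L)}\left(\frac{B^{1-t}\,\tilde R^{\,1+(1-t)/L}}{R}\right)^{j}=j^{2(1+(1-t)/L)}\left(\frac{(B\tilde R^{1/L})^{1-t}}{R/\tilde R}\right)^{j}. \end{displaymath}
For $t>1-\alpha$ one has $1-t<\alpha$, and since $B\tilde R^{1/L}\ge B>1$ this yields $(B\tilde R^{1/L})^{1-t}<(B\tilde R^{1/L})^{\alpha}=B^{\alpha}\tilde R^{\alpha/L}\le R/\tilde R$ by hypothesis; hence the geometric ratio is $<1$ and $\sum_{j}N_{j}\delta_{j}^{t}<\infty$. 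By the first paragraph, $\dim\{\lambda\in I_{0}:\sum_{j}\tilde R^{j}|h_{j}(\lambda)|=\infty\}\le t$ for every $t>1-\alpha$, hence $\le 1-\alpha$; letting $I_{0}\uparrow U$ finishes the proof.

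The only genuine content is the bookkeeping in the last two paragraphs: choosing the scale $\delta_{j}$ so that it simultaneously annihilates the $L$-th-derivative term and keeps $N_{j}$ small, and then recognizing that the hypothesis $B^{\alpha}\tilde R^{\alpha/L}\le R/\tilde R$ is precisely the condition making the resulting geometric series convergent for every exponent past $1-\alpha$. The interpolation inequality is standard, the $\limsup$/Hausdorff-measure mechanics are routine, and the stray polynomial factor $j^{2(1+(1-t)/L)}$ is harmless once the geometric ratio is controlled; so I expect no real obstacle beyond tracking the exponents carefully.
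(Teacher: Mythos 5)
Your argument is correct. The paper does not prove this lemma itself but defers to \cite[Lemma 3.1]{PS}, and your proof is essentially that argument: the same reduction of the divergence set to $\limsup_{j}\{\lambda : |h_{j}(\lambda)|>(j^{2}\tilde{R}^{j})^{-1}\}$, the same covering scale $\delta_{j}\asymp\eta_{j}^{1/L}B^{-j}$, and the same count $N_{j}\lesssim R^{-j}\delta_{j}^{-1}\eta_{j}^{-1}$, the only (cosmetic) difference being that you extract the count from a Taylor--Nikolskii interpolation inequality where Peres and Schlag iterate a ``either the function stays large or some derivative is large'' dichotomy through the orders $1,\ldots,L$.
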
 

\begin{remark} The exact reference to this result is \cite[Lemma 3.1]{PS}. The result there is formulated with $L = \infty$, but the proof actually yields the slightly stronger statement above -- and we will need it. The stronger version is observed by Peres and Schlag themselves on the first few lines of \cite[\S3.2]{PS}.

\end{remark}

\section{The Main Lemma}

Now we are equipped to study the dimension of the sliced measures 
\begin{displaymath} \mu_{\lambda,t} := (\Psi_{\lambda\sharp}\mu)_{\rho_{2},t}, \end{displaymath}
where $\Psi_{\lambda} \colon \Omega \to \R^{2}$ is the function introduced in Definition \ref{psi}. Thus, we are not attempting to slice the measure $\mu$ with respect to the transversal projections $\pi_{\lambda}$. Rather, we first map the measure into $\R^{2}$ with a sufficiently dimension preserving function $\Psi_{\lambda}$, and then slice the image measure $\Psi_{\lambda\sharp}\mu$. The reason for this is simple: in $\Omega$, the residence of $\mu$, we could not use Fourier analytic machinery required to prove Lemma \ref{thm1} below.
\begin{lemma}\label{thm1} Let $\mu$ be a Radon measure on $\Omega$, and let $1 < s < 2$. 
\begin{itemize} 
\item[(i)] If the projections $\pi_{\lambda}$ satisfy the regularity and transversality assumptions (\ref{transversality}) and (\ref{regularity}) with $\tau = 0$, then $I_{s}(\mu) < \infty$ implies
\begin{equation}\label{thm1Ineq} \dim \left\{\lambda \in J : \int_{\R} I_{s - 1}(\mu_{\lambda,t}) \, dt = \infty\right\} \leq 2 - s. \end{equation}

\item[(ii)] If the projections $\pi_{\lambda}$ only satisfy (\ref{transversality}) and (\ref{regularity}) for some $\tau > 0$ so small that $s + \tau^{1/3} < 2$, we still have (\ref{thm1Ineq}), assuming that $I_{t}(\mu) < \infty$ for  $t \geq s + \varepsilon(\tau)$, where $\varepsilon(\tau) > 0$ is a constant depending on $\tau$. Moreover, $\varepsilon(\tau) \to 0$ as $\tau \to 0$.
\end{itemize}
\end{lemma}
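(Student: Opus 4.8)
I propose to reduce the statement, through the last lemma of Section~3, to a dimension estimate for the divergence set of a single weighted Fourier integral, and then to extract that estimate from the Peres--Schlag divergence lemma (Lemma~\ref{series}). Since $\Omega$ is compact and the $\lambda$-derivatives of $\pi_{\lambda}$ are uniformly bounded on compact subintervals by~(i), the image $\Psi_{\lambda\sharp}\mu$ is a compactly supported Radon measure on $\R^{2}$ and $\mu_{\lambda,t}=(\Psi_{\lambda\sharp}\mu)_{\rho_{2},t}$; applying that lemma with $d=s$ (legitimate since $1<s<2$) yields $\int_{\R}I_{s-1}(\mu_{\lambda,t})\,dt\lesssim_{s}\mathcal{I}(\lambda)$, where $\mathcal{I}(\lambda):=\int_{\R^{2}}|\rho_{1}(x)|^{s-2}|\widehat{\Psi_{\lambda\sharp}\mu}(x)|^{2}\,dx$, so it suffices to bound $\dim\{\lambda\in J:\mathcal{I}(\lambda)=\infty\}$; fixing a compact $I\subset J$, I work on $\interior(I)$. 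On the cone $\{|x_{1}|\geq\varepsilon_{0}|x_{2}|\}$ one has $|\rho_{1}(x)|^{s-2}\lesssim_{\varepsilon_{0}}|x|^{s-2}$, so by~\eqref{s-energy} and the lower bound in~\eqref{holder} the corresponding part of $\mathcal{I}(\lambda)$ is $\lesssim_{\varepsilon_{0}}I_{s}(\Psi_{\lambda\sharp}\mu)\lesssim_{I}I_{s(1+\tau)}(\mu)$, finite for \emph{every} $\lambda$ (equal to $c_{s}I_{s}(\mu)$ when $\tau=0$) and hence irrelevant for the exceptional set. On the complementary cone, writing $u=x_{1}/x_{2}$, the phase factorises as $x\cdot\Psi_{\lambda}(\omega)=x_{2}\,\pi^{(u)}_{\lambda}(\omega)$ with $\pi^{(u)}_{\lambda}:=\pi_{\lambda}+u\,\partial_{\lambda}\pi_{\lambda}$, and a short computation shows that for $\varepsilon_{0}$ small the family $\{\pi^{(u)}_{\lambda}\}_{\lambda\in I}$ again satisfies transversality and regularity --- of order $0$ when $\tau=0$, of order $\asymp\tau$ otherwise --- uniformly in $u$: if $|\Phi_{\lambda}+u\,\partial_{\lambda}\Phi_{\lambda}|$ is small, then~\eqref{transversality} forces $|\partial_{\lambda}\Phi_{\lambda}|$, hence $|\partial_{\lambda}\Phi_{\lambda}+u\,\partial^{2}_{\lambda}\Phi_{\lambda}|$, to be bounded below.

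The heart of the proof is an averaged estimate for the dyadic pieces of $\mathcal{I}(\lambda)$ on this cone. Decompose it into pieces $\Delta_{j,k}=\{|x_{2}|\asymp2^{j},\ |x_{1}|\asymp2^{j-k}\}$ ($j\geq1$, $k\geq k_{0}$, so $2^{-k}$ is the aspect ratio $|x_{1}|/|x_{2}|$), and put $F_{j,k}(\lambda)=\int_{\Delta_{j,k}}|x_{1}|^{s-2}|\widehat{\Psi_{\lambda\sharp}\mu}(x)|^{2}\,dx\geq0$, so that $\mathcal{I}(\lambda)\lesssim\mathrm{const}+\sum_{j,k}F_{j,k}(\lambda)$. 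I claim
\begin{displaymath} \int_{I}F_{j,k}(\lambda)\,d\lambda\ \lesssim_{I,N}\ 2^{js-k(s-1)}\iint(1+2^{j}d(\omega,\omega'))^{-N}\,d\mu\omega\,d\mu\omega'\ \lesssim_{I}\ 2^{-js}\,2^{-k(s-1)}\,I_{s}(\mu) \end{displaymath}
when $\tau=0$ (with an extra factor $2^{C(j+k)\tau}$, and $I_{s}(\mu)$ replaced by $I_{s+\varepsilon(\tau)}(\mu)$, when $\tau>0$). One proves this by expanding $|\widehat{\Psi_{\lambda\sharp}\mu}|^{2}$ as a double integral over $\Omega\times\Omega$, using Fubini, and integrating by parts repeatedly in $\lambda$: the transversality of $\{\pi^{(u)}_{\lambda}\}$ (equivalently, of $\Phi_{\lambda}$) keeps the phase $x\cdot\Psi_{\lambda}$ non-stationary where it matters, and~\eqref{regularity} controls its higher derivatives, giving $\bigl|\int_{I}e^{-ix\cdot(\Psi_{\lambda}(\omega)-\Psi_{\lambda}(\omega'))}\,d\lambda\bigr|\lesssim_{N}(1+2^{j}d(\omega,\omega'))^{-N}$ for all $N$; one then integrates in $x$ over $\Delta_{j,k}$ --- this is where $s-1\in(0,1)$ enters, since then $|x_{1}|^{s-2}$ is locally integrable and $\int_{\Delta_{j,k}}|x_{1}|^{s-2}\,dx\asymp2^{js-k(s-1)}$ --- and in $\mu\times\mu$, using $\iint(1+2^{j}d)^{-N}\,d\mu\,d\mu\lesssim_{N}2^{-js}I_{s}(\mu)$. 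Differentiating $F_{j,k}$ under the integral and using~(i) and~\eqref{regularity} gives the companion bound $\|F_{j,k}^{(l)}\|_{L^{\infty}(I)}\lesssim_{I,l}2^{jl}\cdot2^{js-k(s-1)}$.

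Now feed these two families of bounds into Lemma~\ref{series}, after a dyadic reorganisation in $j$ and $k$: applied to $h_{j,k}:=2^{-js}F_{j,k}$, with $\sum_{j,k}2^{js}h_{j,k}(\lambda)=\infty$ as the divergence event, one has geometric $L^{1}$-decay at rate $2^{2s}$ in $j$ and $2^{s-1}$ in $k$, and derivative-growth bounds at rates $2$ in $j$ and $2^{s-1}$ in $k$, with $L$ large. When $\tau=0$ the admissibility conditions of Lemma~\ref{series} are met with $\alpha=s-1\in(0,1)$, giving $\dim\{\lambda:\mathcal{I}(\lambda)=\infty\}\leq1-(s-1)=2-s$, which is~(i). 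For $\tau>0$ one runs the same argument with the energy exponent $s+\varepsilon(\tau)$ in place of $s$ throughout; the $\tau$-losses above are linear in $\tau$, so one may take $\varepsilon(\tau)\asymp\tau$ --- a fortiori $\varepsilon(\tau)=\tau^{1/3}\to0$ works, and the hypothesis $s+\tau^{1/3}<2$ keeps $I_{s+\varepsilon(\tau)}(\mu)$ meaningful --- and they are exactly absorbed, so one again reaches $\dim\{\mathcal{I}=\infty\}\leq2-s$, i.e.~\eqref{thm1Ineq}. Taking the countable union over compact $I\subset J$ finishes the proof.

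The main obstacle is the averaged estimate of the second paragraph. After the $x_{1}$-integration there is an implicit weight $|\partial_{\lambda}\Phi_{\lambda}(\omega,\omega')|^{1-s}$ (with $1-s<0$) which blows up precisely along the slice $\{\Phi_{\lambda}=0\}$ --- exactly where the oscillatory $\lambda$-integral ceases to be non-stationary --- so the transversality inequality~\eqref{transversality}, which only forces $|\partial_{\lambda}\Phi_{\lambda}|$ to be large in the regime $|\Phi_{\lambda}|\lesssim d(\omega,\omega')^{\tau}$, must be coupled with the complementary large-$|\Phi_{\lambda}|$ regime in a way compatible with the repeated integrations by parts; disentangling these is the full Peres--Schlag oscillatory scheme, here adapted to the phases $x\cdot\Psi_{\lambda}$ uniformly over the cone. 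The remaining delicate points are the multi-scale bookkeeping in $j$ and $k$ needed to make Lemma~\ref{series} output the sharp exponent $2-s$, and tracking how the $\tau$-powers of~\eqref{transversality}--\eqref{regularity} propagate so as to pin down $\varepsilon(\tau)$ with $\varepsilon(\tau)\to0$. Everything else --- the reduction of the first paragraph, the splitting off of the easy cone, and the derivative bounds --- is routine given Definition~\ref{projections} and the lemmas already established.
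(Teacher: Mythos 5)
Your overall strategy is the paper's: reduce via the slicing--energy lemma to bounding $\mathcal{I}(\lambda)=\int|\rho_{1}(x)|^{s-2}|\widehat{\Psi_{\lambda\sharp}\mu}(x)|^{2}\,dx$, split off the cone where $|\rho_{1}(x)|\gtrsim|x|$ (handled by \eqref{s-energy} and \eqref{holder} for \emph{every} $\lambda$), decompose the remaining vertical cone dyadically, prove $L^{1}(I)$ and $L^{\infty}(I)$-derivative bounds for the pieces, and feed them into Lemma \ref{series}. But there is a genuine gap in the multi-scale bookkeeping of your second and third paragraphs. Your own chain of inequalities gives $\int_{I}F_{j,k}\,d\lambda\lesssim 2^{js-k(s-1)}\cdot 2^{-js}I_{s}(\mu)=2^{-k(s-1)}I_{s}(\mu)$, \emph{not} $2^{-js}2^{-k(s-1)}I_{s}(\mu)$: there is no decay in the radial index $j$ at the level of individual pieces, so the advertised ``geometric $L^{1}$-decay at rate $2^{2s}$ in $j$'' is unavailable and Lemma \ref{series} gives nothing in the $j$-direction (you would be forced into $R/\tilde{R}=1$, i.e.\ $\alpha=0$). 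Decay in $j$ only appears after summing the oscillatory bound over $j$ \emph{inside} the $\mu\times\mu$ integral, via $\sum_{j}2^{js}(1+2^{j}d)^{-N}\asymp d^{-s}$. So the radial sum must be performed before Lemma \ref{series} is invoked --- but then your companion derivative bound $\|F_{j,k}^{(l)}\|_{\infty}\lesssim 2^{jl}2^{js-k(s-1)}$ is useless, since $\sum_{j}2^{jl}(\cdots)=\infty$: the radially summed functions have no termwise sup-norm derivative control. This is exactly the tension the ``dyadic reorganisation in $j$ and $k$'' must resolve, and it is not routine.

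The paper's resolution of this tension is a substantive device absent from your proposal: the functions fed into Lemma \ref{series} are indexed by the angular variable only, $h_{i}(\lambda)=2^{-i}\int\varphi_{i}(\arg x)|x|^{s-2}|\widehat{\Psi_{\lambda\sharp}\mu}(x)|^{2}\,dx$, with the full radial integral inside. The $L^{1}$ bound is obtained as you intend (oscillatory lemma, then radial summation), but the derivative bound requires computing the Fourier transform of $x\mapsto\varphi_{i}(\arg x)|x|^{s-2}$ \emph{exactly}, via its spherical-harmonic expansion and the Stein--Weiss formula, so that $h_{i}(\lambda)=2^{-i}\iint F_{i}(\Psi_{\lambda}(x)-\Psi_{\lambda}(y))\,d\mu\,d\mu$ with $F_{i}$ homogeneous of degree $-s$; the lower H\"older bound $|\Psi_{\lambda}(x)-\Psi_{\lambda}(y)|\gtrsim d(x,y)^{1+\tau}$ then yields $\|h_{i}^{(l)}\|_{\infty}\lesssim 2^{il}I_{s+\tau(\cdots)}(\mu)$, with geometric growth only in the angular index. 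Without this (or an equivalent substitute) your application of Lemma \ref{series} does not close. Separately, your oscillatory estimate $\bigl|\int_{I}e^{-ix\cdot(\Psi_{\lambda}(\omega)-\Psi_{\lambda}(\omega'))}\,d\lambda\bigr|\lesssim_{N}(1+2^{j}d)^{-N}$ is asserted rather than proved; the correct statement carries a loss $(1+2^{j}d^{1+A\tau})^{-q}$ and its proof (the Appendix, via Lemma \ref{Intervals}, Taylor expansion at the zeros of $\Phi_{\lambda}$ and the Fa\`a di Bruno formula) is most of the work --- you acknowledge this, but together with the bookkeeping problem it means the proposal does not yet constitute a proof, and in particular the exponent $\varepsilon(\tau)$ cannot be pinned down from what is written.
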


\begin{proof} Assume that $I_{t}(\mu) < \infty$ for some $t \geq s$. We aim to determine the range of parameters $\tau > 0$ for which (\ref{thm1Ineq}) holds under this hypothesis. According to our second lemma we have
\begin{displaymath} \int_{\R} I_{s - 1}(\mu_{\lambda,t}) \, dt \lesssim_{s} \int_{\R^{2}} |\rho_{1}(x)|^{s - 2}|\widehat{\Psi_{\lambda\sharp}\mu}(x)|^{2} \, dx \end{displaymath}
for any $\lambda \in J$. We will attempt to show that the integral on the right hand side is finite for as many $\lambda \in J$ as possible. This is achieved by expressing the integral in the form of a power series and then applying Lemma \ref{series}. Some of the work in verifying the conditions of Lemma \ref{series} involves practically replicating ingredients from Peres and Schlag's original proof of Theorem \ref{sobolev}. Instead of using phrases such as 'we then argue as in \cite{PS}', we provide all the details for the reader's convenience, some of them in the Appendix.

Fix a compact subinterval $I \subset J$, and let $\lambda \in I$. We start by splitting our integral in two pieces:
\begin{align}\label{form5} \int_{\R^{2}} |\rho_{1}(x)|^{s - 2}|\widehat{\Psi_{\lambda\sharp}\mu}(x)|^{2} \, dx & =  \int_{\mathcal{C}} |\rho_{1}(x)|^{s - 2}|\widehat{\Psi_{\lambda\sharp}\mu}(x)|^{2} \, dx\\
&\label{form6} + \int_{\R^{2} \setminus \mathcal{C}} |\rho_{1}(x)|^{s - 2}|\widehat{\Psi_{\lambda\sharp}\mu}(x)|^{2} \, dx,
\end{align}
where $\mathcal{C}$ is the \emph{vertical} cone 
\begin{displaymath} \mathcal{C} = \{z : -\pi/4 \leq \arg z \leq \pi/4\} \cup \{\bar{z} : -\pi/4 \leq \arg z \leq \pi/4\}, \end{displaymath}
Here $\arg z$ is shorthand for \emph{signed angle formed by $z$ with the positive $y$-axis}, and $\bar{z}$ refers to complex conjugation. With this choice of $\mathcal{C}$, we have $|\rho_{1}(x)| \gtrsim |x|$ for $x \in \R^{2} \setminus \mathcal{C}$, which means that the integral on line (\ref{form6}) is easily estimated with the aid of equation \eqref{s-energy} and the H\"older bound \eqref{holder}:
\begin{align*} \int_{\R^{2} \setminus \mathcal{C}} |\rho_{1}(x)|^{s - 2}|\widehat{\Psi_{\lambda\sharp}\mu}(x)|^{2} \, dx & \lesssim_{s}	 \int |x|^{s - 2}|\widehat{\Psi_{\lambda\sharp}\mu}(x)|^{2} \, dx\\
& \asymp_{s} \iint |x - y|^{-s} \, d\Psi_{\lambda\sharp}\mu x \, d\Psi_{\lambda\sharp}\mu y\\
& = \iint_{\Omega \times \Omega} |\Psi_{\lambda}(x) - \Psi_{\lambda}(y)|^{-s} \, d\mu x \, d\mu y\\
& \lesssim_{I,\tau} \iint_{\Omega \times \Omega} d(x,y)^{-(1 + \tau)s} \, d\mu x \, d\mu y \end{align*} 
Thus the integral over $\R^{2} \setminus \mathcal{C}$ is finite for \emph{all} $\lambda \in I$, as soon as $(1 + \tau)s \leq t$, which sets the first restriction for the admissible parameters $\tau > 0$.

Write $\mathcal{C} = \mathcal{C}^{1} \cup \mathcal{C}^{2} \cup \mathcal{C}^{3} \cup \mathcal{C}^{4}$, where $\mathcal{C}^{i}$ is the intersection of $\mathcal{C}$ with the $i^{th}$ quadrant in $\R^{2}$. We will show that, if $\tau > 0$ is small enough, then the integral of $|\rho_{1}(x)|^{s - 2}|\widehat{\Psi_{\lambda\sharp}\mu}(x)|^{2}$ over each of these smaller cones can be infinite for parameters $\lambda$ in a set of dimension at most $2 - s$. This will prove the lemma. The treatment of each of the cones $\mathcal{C}^{i}$ is similar, so we restrict attention to $\mathcal{C}^{2}$, and, for simplicity, write $\mathcal{C} := \mathcal{C}^{2} = \{z \in \C : 0 \leq \arg z \leq \pi/4\}$ (thus $\mathcal{C}$ lies in the upper left quadrant of the plane). Further split $\mathcal{C}$ into sub-cones $\mathcal{C}_{i}$, $i = 2,3,...$, where $\mathcal{C}_{i} = \{z : 2^{-i - 1}\pi < \arg z \leq 2^{-i}\pi\}$. The $y$-axis is not covered, but this has no effect on integration:
\begin{align} \int_{\mathcal{C}} |\rho_{1}(x)|^{s - 2}|\widehat{\Psi_{\lambda\sharp}\mu}(x)|^{2} \, dx & = \sum_{i = 2}^{\infty} \int_{\mathcal{C}_{i}} |\rho_{1}(x)|^{s - 2}|\widehat{\Psi_{\lambda\sharp}\mu}(x)|^{2} \, dx\notag\\
&\label{form7} \asymp_{s} \sum_{i = 2}^{\infty} 2^{i(2 - s)} \int_{\mathcal{C}_{i}} |x|^{s - 2} |\widehat{\Psi_{\lambda\sharp}\mu}(x)|^{2} \,dx. \end{align} 
The passage to (\ref{form7}) follows by writing $|\rho_{1}(x)| = |\rho_{1}(x/|x|)|\cdot |x|$ and noting that $\rho_{1}(\zeta) = \sin (\arg \zeta) \asymp \arg \zeta$ for $\zeta \in \mathcal{C} \cap S^{1}$. To prove that (\ref{form7}) is finite for as many $\lambda \in I$ as possible, we need to replace $\chi_{\mathcal{C}_{i}}$ by something smoother. To this end, choose an infinitely differentiable function $\varphi$ on $\R$ satisfying
\begin{displaymath} \chi_{[-1,1]} \leq \varphi \leq \chi_{[-2,2]}. \end{displaymath}
Then let $\varphi_{i}$ be defined by $\varphi_{i}(t) = \varphi(c_{i}t + a_{i})$, where the numbers $a_{i}$ and $c_{i}$ are so chosen that
\begin{displaymath} \chi_{I_{i}} \leq \varphi_{i} \leq \chi_{2I_{i}}, \end{displaymath} 
with $I_{i} = [\pi2^{-i - 1},\pi2^{-i}]$, and $2I_{i}$ denotes the interval with the same mid-point and twice the length as $I_{i}$. Clearly $c_{i} \asymp 2^{i}$. Now
\begin{displaymath} \int_{\mathcal{C}_{i}} |x|^{s - 2} |\widehat{\Psi_{\lambda\sharp}\mu}(x)|^{2} \,dx \leq \int \varphi_{i}(\arg x)|x|^{s - 2}|\widehat{\Psi_{\lambda\sharp}\mu}(x)|^{2} \, dx. \end{displaymath} 
With our eyes fixed on applying Lemma \ref{series}, define
\begin{displaymath} h_{i}(\lambda) := 2^{-i}\int \varphi_{i}(\arg x)|x|^{s - 2}|\widehat{\Psi_{\lambda\sharp}\mu}(x)|^{2} \, dx. \end{displaymath} 
The theorem will be proven by showing that
\begin{equation}\label{form8} \dim \left\{\lambda \in I : \sum_{i = 2}^{\infty} 2^{i(3 - s)}h_{i}(\lambda) = \infty\right\} \leq 2 - s, \end{equation} 
provided that $\tau > 0$ is small. In order to apply Lemma \ref{series}, we now need to estimate both the derivatives and integrals of the functions $h_{i}$.

\subsection{The Integrals} Write $A_{j} := \{x \in \R^{2} : 2^{j - 1} \leq |x| \leq 2^{j}\}$ and $A_{ij} := \{x \in A_{j} : \arg x \in 2I_{i}\}$ for $i = 2,3,\ldots$ and $j \in \Z$. Then, by the choice of $\varphi_{i}$, we have
\begin{equation}\label{form9} h_{i}(\lambda) \lesssim_{s} 2^{-i} \sum_{j \in \Z} 2^{j(s - 2)} \int_{A_{ij}} |\widehat{\Psi_{\lambda\sharp}\mu}(x)|^{2} \, dx. \end{equation} 
Using basic trigonometry, one sees that $x_{1} \asymp -2^{j - i}$ and $x_{2} \asymp 2^{j}$ for $x = (x_{1},x_{2}) \in A_{ij}$. In other words, one may choose an absolute constant $a \geq 1$ such that the sets $A_{ij}$ are covered by the rectangles $Q_{ij} = [-a2^{j - i},-a^{-1}2^{j - i}] \times [a^{-1}2^{j},a2^{j}]$,  see the picture below. 
\begin{figure}[ht!]
\begin{center}
\includegraphics[scale = 1]{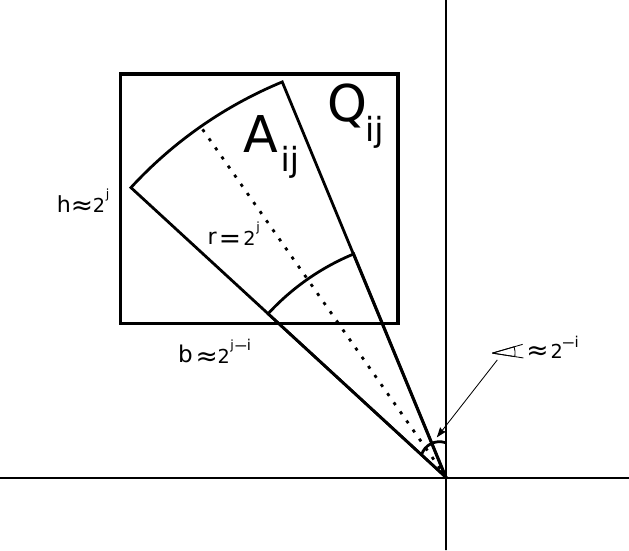}
\end{center}
\caption{The sets $A_{ij}$ and the covering rectangles $Q_{ij}$}
\end{figure}
Next, let $\eta$ be an infinitely differentiable function on $\R$, satisfying $\spt \eta \subset (a^{-2},2a)$ and $\eta|[a^{-1},a] \equiv 1$. Then the function $\eta_{ij} := \widetilde{\eta_{j - i}}\times\eta_{j}$ defined by
\begin{displaymath} \widetilde{\eta_{j - i}}\times\eta_{j}(x) := \widetilde{\eta_{j - i}}(x_{1})\eta_{j}(x_{2}), \qquad x = (x_{1},x_{2}) \in \R^{2}, \end{displaymath}
is identically one on $Q_{ij}$ (and thus $A_{ij}$) for every pair of indices $i = 2,3,\ldots$ and $j \in \Z$, where $\eta_{k}(t) := \eta(2^{-k}t)$ and $\tilde{f}(t) := f(-t)$ for any function $f \colon \R \to \R$. Using some basic properties of the Fourier transform, and the identity 
\begin{displaymath} \Psi_{\lambda}(x) - \Psi_{\lambda}(y) = (d(x,y)\partial_{\lambda}\Phi_{\lambda}(x,y),d(x,y)\Phi_{\lambda}(x,y)), \end{displaymath}
we may now estimate
\begin{align*} \int_{A_{ij}} & |\widehat{\Psi_{\lambda\sharp}\mu}(x)|^{2} \, dx \leq \int \eta_{ij} |\widehat{\Psi_{\lambda\sharp}\mu}(x)|^{2} \, dx = \iint \widehat{\eta_{j - i} \times \widetilde{\eta_{j}}}\,(x - y) \, d\Psi_{\lambda\sharp}\mu x \, d\Psi_{\lambda\sharp}\mu y\\
& = \iint_{\Omega \times \Omega} \widehat{\eta_{j - i}}(\rho_{1}[\Psi_{\lambda}(x) - \Psi_{\lambda}(y)])\widehat{\widetilde{\eta_{j}}}(\rho_{2}[\Psi_{\lambda}(x) - \Psi_{\lambda}(y)]) \, d\mu x \, d\mu y\\
& = 2^{2j - i}\iint_{\Omega \times \Omega} \hat{\eta}(2^{j - i}r\partial_{\lambda}\Phi_{\lambda}(x,y))\bar{\hat{\eta}}(2^{j}(r\Phi_{\lambda}(x,y)) \, d\mu x \, d\mu y, \end{align*} 
where $r := d(x,y)$. To bound the $\lambda$-integral of the expression on the last line, we need
\begin{lemma}\label{PSLemma} Let $\gamma$ be a smooth function supported on $J$. Then, for any $j \in \Z$ and $q \in \N$, we have the estimate
\begin{displaymath} \left| \int_{\R} \gamma(\lambda)\hat{\eta}(2^{j - i}r\partial_{\lambda}\Phi_{\lambda}(x,y))\bar{\hat{\eta}}(2^{j}r\Phi_{\lambda}(x,y)) \, d\lambda \right| \lesssim_{d(\Omega),\gamma,q} (1 + 2^{j}r^{1 + A\tau})^{-q}, \end{displaymath} 
where $A \geq 1$ is some absolute constant.
\end{lemma}  

\begin{proof} The proof given in \cite{PS} for \cite[Lemma 4.6]{PS} extends to our situation. In fact, the statement would be virtually the same as in \cite[Lemma 4.6]{PS} without the presence of the factor 
\begin{displaymath} \hat{\eta}(2^{j - i}r\partial_{\lambda}\Phi_{\lambda}(x,y)). \end{displaymath}
Unfortunately, the proof of \cite[Lemma 4.6]{PS} requires something more delicate than 'bringing the absolute values inside the integral', so this factor cannot be completely dismissed. We discuss the lengthy details in Appendix A. 
\end{proof}

Now we are prepared to estimate the $L^{1}(I)$-norms of the functions $h_{i}$, as required by Lemma \ref{series}. Fix any $r \in (0,1)$, and let $\gamma$ be a smooth function supported on $J$ and identically one on $I$. For brevity, write
\begin{displaymath} \Gamma_{x,y}^{ij}(\lambda) := \gamma(\lambda)\hat{\eta}(2^{j - i}r\partial_{\lambda}\Phi_{\lambda}(x,y))\bar{\hat{\eta}}(2^{j}(r\Phi_{\lambda}(x,y)). \end{displaymath}
Then, use (\ref{form9}) and the Lemma above:
\begin{align*} \sum_{i = 2}^{\infty} 2^{i(r + 1)} \int_{I} h_{i}(\lambda) \, d\lambda & \lesssim_{s} \sum_{i = 2}^{\infty} 2^{ir} \sum_{j \in \Z} 2^{j(s - 2)} \int_{\R} \gamma(\lambda) \int_{A_{ij}} |\widehat{\Psi_{\lambda\sharp}\mu}(x)|^{2} \, dx \, d\lambda\\
& \leq \sum_{i = 2}^{\infty} 2^{i(r - 1)} \sum_{j \in \Z} 2^{js} \iint_{\Omega \times \Omega} \left| \int_{\R} \Gamma_{x,y}^{ij}(\lambda) \, d\lambda \right| \, d\mu x \, d\mu y\\
& \lesssim_{I} \sum_{i = 2}^{\infty} 2^{i(r - 1)} \iint_{\Omega \times \Omega} \sum_{j \in \Z} 2^{js} (1 + 2^{j}d(x,y)^{1 + A\tau})^{-2} \, d\mu x \, d\mu y\\
& \lesssim_{s} \sum_{i = 2}^{\infty} 2^{i(r - 1)} \iint_{\Omega \times \Omega} d(x,y)^{-(1 + A\tau)s} \, d\mu x \, d\mu y
\end{align*}
This sum is finite, if $\tau > 0$ is so small that $(1 + A\tau)s \leq t$. Under this hypothesis, we conclude that
\begin{equation}\label{form10} \|h_{i}\|_{L^{1}(I)} \lesssim_{I,r} 2^{-i(r + 1)}, \qquad r \in (0,1). \end{equation}

\subsection{The Derivatives}
First, we  need to find out the Fourier transform of the function $x \mapsto \varphi_{j}(\arg x)|x|^{s - 2}$. To this end, note that $\varphi$ is certainly smooth enough to have an absolutely convergent Fourier series representation:
\begin{displaymath} \varphi_{j}(\theta) = \sum_{k \in \Z} \hat{\varphi}_{j}(k)e^{ik\theta}. \end{displaymath}
Thus 
\begin{displaymath} \varphi_{j}(\arg x)|x|^{s - 2} = |x|^{s - 2}\sum_{k \in \Z} \hat{\varphi}_{j}(k) e^{ik\arg x} = \sum_{k \in \Z} |x|^{s - 2 - |k|}P_{j,k}(x), \end{displaymath}
where 
\begin{displaymath} P_{j,k}(x) := |x|^{|k|}\hat{\varphi}_{j}(k)e^{ik\arg x} \end{displaymath}
is a harmonic polynomial of degree $|k|$ in $\R^{2}$, which is also homogeneous of degree $|k|$.\footnote{In complex notation, $P_{j,k}(z) = i^{-k}\hat{\varphi}_{j}(k)z^{k}$ if $k \geq 0$, and $P_{j,k} = i^{-k}\hat{\varphi}_{j}(k)\bar{z}^{|k|}$ if $k < 0$. The factor $i^{-k}$ results from the fact that $\arg z = \Arg z - \pi/2$ with our definition of $\arg$.} The Fourier transform of each term $K_{j,k,s}(x) := P_{j,k}(x)|x|^{s - 2 - |k|}$ can be computed by an explicit formula given in \cite[Chapter IV, Theorem 4.1]{SW}:
\begin{displaymath} \hat{K}_{j,k,s}(x) = i^{-|k|}\pi^{1-s}\frac{\Gamma\left(\frac{|k| + s}{2}\right)}{\Gamma\left(\frac{2 + |k| - s}{2}\right)}P_{j,k}(x)|x|^{-|k| - s}.\end{displaymath} 
Thus, the Fourier transform of $x \mapsto \varphi_{j}(\arg x)|x|^{s - 2}$ is the function
\begin{displaymath} F_{j}(x) = \pi^{1-s}|x|^{-s}\sum_{k \in \Z} i^{-|k|}\frac{\Gamma\left(\frac{|k| + s}{2}\right)}{\Gamma\left(\frac{2 + |k| - s}{2}\right)}\hat{\varphi}_{j}(k)H_{k}(x), \end{displaymath}
where $H_{k}(x) = e^{ik\arg x}$. We may now rewrite $h_{j}$ as
\begin{align*} h_{j}(\lambda) & = 2^{-j}\iint F_{j}(x - y) \, d\Psi_{\lambda\sharp}\mu(x) \, d\Psi_{\lambda\sharp}\mu(y)\\
& = 2^{-j}\iint F_{j}(\Psi_{\lambda}(x) - \Psi_{\lambda}(y)) \, d\mu x \, d\mu y, \end{align*}
whence, in order to evaluate the $\partial_{\lambda}^{(l)}$-derivatives of $h_{j}$, we only need to consider the corresponding derivatives of the mappings $\lambda \mapsto F_{j}(\Psi_{\lambda}(x) - \Psi_{\lambda}(y))$ for arbitrary $x,y \in \Omega$. From the bounds we obtain for the derivative, it will be clear that if $I_{t}(\mu) < \infty$ for some $t \geq s$ large enough, then no issues arise from the legitimacy of exchanging the order or differentiation and integration. For fixed $x,y \in \Omega$, the mapping $\lambda \mapsto F_{j}(\Psi_{\lambda}(x) - \Psi_{\lambda}(y))$ is the composition of $F_{j}$ with the path $\lambda \mapsto \Psi_{\lambda}(x) - \Psi_{\lambda}(y) =: \gamma(\lambda)$, whence $\partial_{\lambda}^{(l)}F_{j}(\Psi_{\lambda}(x) - \Psi_{\lambda}(y)) = (F_{j} \circ \gamma)^{(l)}(\lambda)$. 
\begin{proposition}\label{Pderivatives} The derivative $(F_{j} \circ \gamma)^{(l)}(\lambda)$ consists of finitely many terms of the form 
\begin{displaymath} \partial^{\beta}F_{j}(\gamma(\lambda))\tilde{\gamma}_{1}(\lambda)\cdots\tilde{\gamma}_{|\beta|}(\lambda), \end{displaymath}
where $\beta$ is a multi-index of length $|\beta| \leq l$ and $\tilde{\gamma}_{j} = \gamma_{i}^{(k)}$ for some $i \in \{1,2\}$ and $k \leq l$.
\end{proposition}

\begin{proof} To get the induction started, note that 
\begin{displaymath} (F_{j} \circ \gamma)'(\lambda) = \nabla F_{j}(\gamma(\lambda)) \cdot \gamma'(\lambda) = \partial_{1}F_{j}(\gamma(\lambda))\gamma_{1}'(\lambda) + \partial_{2}F_{j}(\gamma(\lambda))\gamma_{2}'(\lambda). \end{displaymath} 
The expression on the right is certainly of the correct form. Then assume that the claim holds up to some $l \geq 1$. Then $(F_{j} \circ \gamma)^{(l + 1)}(\lambda)$ is the sum of the \emph{derivatives} of the terms occurring in the expression for $(F_{j} \circ \gamma)^{(l)}(\lambda)$. Take one such term $\partial^{\beta}F_{j}(\gamma(\lambda))\tilde{\gamma}_{1}(\lambda)\cdots\tilde{\gamma}_{|\beta|}(\lambda)$. The product rule shows that the derivative of this term is
\begin{displaymath} [\nabla \partial^{\beta}F_{j}(\gamma(\lambda)) \cdot \gamma'(\lambda)]\tilde{\gamma}_{1}(\lambda)\cdots\tilde{\gamma}_{|\beta|}(\lambda) + \partial^{\beta}F_{j}(\gamma(\lambda))\sum_{j = 1}^{|\beta|} \tilde{\gamma}_{1}(\lambda) \cdots \tilde{\gamma}_{j}'(\lambda) \cdots \tilde{\gamma}_{|\beta|}(\lambda) \end{displaymath}
The claim now follows immediately from this formula.
\end{proof}
At this point we state the estimate we aim to prove:
\begin{claim}\label{Fderivatives} Let, as before, $\gamma(\lambda) = \gamma_{x,y}(\lambda) = \Psi_{\lambda}(x) - \Psi_{\lambda}(y)$. Then
\begin{displaymath} |(F_{j} \circ \gamma)^{(l)}(\lambda)| \lesssim_{I,l,s} 2^{j(l + 1)}d(x,y)^{-s - \tau(l^{2} + l + s)}, \qquad \lambda \in I,\: l \geq 0. \end{displaymath}
\end{claim}
Once this is established, we will immediately obtain
\begin{displaymath} \|h_{j}^{(l)}\|_{L^{\infty}(I)} \lesssim_{I,l,s} 2^{jl} \iint d(x,y)^{-s - \tau(l(l + 1) + s)} \, d\mu x \, d\mu y = 2^{jl}I_{s + \tau(l(l + 1) + s)}(\mu), \end{displaymath}
so that Lemma \ref{series} can be applied with $B = 2$ and any $L \in \N$ such that
\begin{equation}\label{form11} s + \tau(L(L + 1) + s) \leq t. \end{equation} 
By Proposition \ref{Pderivatives}, it suffices to prove Claim \ref{Fderivatives} for all products of the form $\partial^{\beta} F_{j}(\gamma(\lambda))\tilde{\gamma}_{1}(\lambda)\cdots\tilde{\gamma}_{|\beta|}(\lambda)$, where $|\beta| \leq l$ and $\tilde{\gamma}_{j} = \gamma_{i}^{(k)}$ for some $i \in \{1,2\}$ and $k \leq l$. First of all, 
\begin{displaymath} |\tilde{\gamma}_{j}(\lambda)| \leq |\gamma^{(k)}(\lambda)| = |\partial_{\lambda}^{k}(\Psi_{\lambda}(x) - \Psi_{\lambda}(y))| \lesssim_{I,l,\tau} d(x,y)^{1 - (k + 1)\tau} \lesssim d(x,y)^{1 - (l + 1)\tau} \end{displaymath}
according to the regularity assumption (\ref{regularity}). This yields
\begin{displaymath} |\partial^{\beta} F_{j}(\gamma(\lambda))\tilde{\gamma}_{1}(\lambda)\cdots\tilde{\gamma}_{|\beta|}(\lambda)| \lesssim_{I,l} |\partial^{\beta}F_{j}(\gamma(\lambda))|d(x,y)^{|\beta|(1 - (l + 1)\tau)}. \end{displaymath}
Next we will prove that $|\partial^{\beta}F_{j}(\gamma(\lambda))| \lesssim_{I,l,s} 2^{j(l + 1)}d(x,y)^{-(1 + \tau)(s + |\beta|)}$ for multi-indices $\beta$ of length $|\beta| \leq l$. This will prove Claim \ref{Fderivatives}. One of the factors in the definition of $F_{j}$ is the Riesz kernel $x \mapsto k_{s}(x) = |x|^{-s}$, the expression of which does not depend on $j \geq 1$. It is easily checked that $|\partial^{\beta}k_{s}(x)| \lesssim_{l} s^{|\beta|}|x|^{-s - |\beta|}$, if $|\beta| \leq l$. What about the other factor? First we need the following estimate:
\begin{equation}\label{form12} |\partial^{\beta}H_{k}(x)| \lesssim_{l} |k|^{|\beta|}|x|^{-|\beta|} \leq |k|^{l}|x|^{-|\beta|}, \qquad k \in \Z, \: |\beta| \leq l.  \end{equation}
Note that $H_{k}(z) = i^{-k}z^{k}/|z|^{k}$ or $H_{k}(z) = i^{-k}\bar{z}^{|k|}/|z|^{|k|}$ for $z \in \C = \R^{2}$ (depending on whether $k \geq 0$ or $k < 0$). First,
\begin{displaymath} |\partial^{\beta}z^{k}| = |\partial^{\beta}(x + iy)^{k}| = |k(k - 1)\cdots(k - |\beta| + 1)(x + iy)^{k - |\beta|}| \leq |k|^{|\beta|}|z|^{k - |\beta|}, \end{displaymath}
and the same estimate holds for $\partial^{\beta}\bar{z}^{|k|}$. Second, the estimate for the derivatives of the Riesz kernel yields $|\partial^{\beta}|z|^{-k}| \lesssim_{l} |k|^{|\beta|}|x|^{-k - |\beta|}$ for $k \geq 0$ and $|\beta| \leq l$, and (\ref{form12}) then follows by applying the Leibnitz formula.

Mostly for convenience,\footnote{Any estimate of the form $\Gamma(x + \alpha)/\Gamma(x) \lesssim x^{c(\alpha)}$ would suffice to us.} we use the well-known fact that $\Gamma(x + \alpha)/\Gamma(x) \asymp x^{\alpha}$ for $\alpha > 0$ and large $x > 0$. In particular,
\begin{displaymath} \frac{\Gamma\left(\frac{|k| + s}{2}\right)}{\Gamma\left(\frac{2 + |k| - s}{2}\right)} = \frac{\Gamma\left(\frac{2 + |k| - s}{2} + s - 1\right)}{\Gamma\left(\frac{2 + |k| - s}{2}\right)} \lesssim |k|^{s - 1}, \end{displaymath}
Now we may use the rapid decay bound $|\hat{\varphi}_{j}(k)| = c_{j}^{-1}|\hat{\varphi}(k/c_{j})| \lesssim_{l} 2^{j(l + 1)}|k|^{-(l + 2)}$ to conclude that
\begin{align*} \left| \partial^{\beta} \sum_{k \in \Z} i^{-|k|}\frac{\Gamma\left(\frac{|k| + s}{2}\right)}{\Gamma\left(\frac{2 + |k| - s}{2}\right)}\hat{\varphi}_{j}(k)H_{k}(x) \right| & \lesssim_{l} 2^{j(l + 1)} \sum_{k \in \Z} |k|^{-(l + 2)}|k|^{s - 1}|\partial^{\beta}H_{k}(x)|\\
& \lesssim 2^{j(l + 1)}|x|^{-|\beta|} \sum_{k \in \Z} |k|^{s - 3} \lesssim_{s} 2^{j(l + 1)}|x|^{-|\beta|}. \end{align*} 
Finally, using Leibnitz's rule once more, and also the fact from estimate \eqref{holder} that 
\begin{displaymath} |\gamma(\lambda)| = |\Psi_{\lambda}(x) - \Psi_{\lambda}(y)| \gtrsim_{I,\tau} d(x,y)^{1 + \tau}, \end{displaymath}
we obtain
\begin{displaymath} |\partial^{\beta}F_{j}(\gamma(\lambda))| \lesssim_{l,s} 2^{j(l + 1)}\sum_{\alpha \leq \beta}|c_{\alpha\beta}||\gamma(\lambda)|^{-s - |\alpha|}|\gamma(\lambda)|^{|\alpha| - |\beta|} \lesssim_{l} 2^{j(l + 1)}d(x,y)^{-(1 + \tau)(s + |\beta|)} \end{displaymath}
This finishes the proof of Claim \ref{Fderivatives}.

\subsection{Completion of the Proof of Lemma \ref{thm1}}

Now we are prepared to apply Lemma \ref{series} and prove (\ref{form8}). Let us first consider the simpler case $\tau = 0$. Let $t = s$, $B = 2$ and $\tilde{R} = 2^{3 - s}$. Then fix $\alpha < s - 1$ and choose $r \in (0,1)$ so close to one that $\alpha < r + s - 2$. Next, let $R := 2^{r + 1}$, and choose $L \in \N$ be so large that $\alpha + \alpha(3 - s)/L \leq r + s - 2$. As $\tau = 0$, the hypotheses $(1 + A\tau)s \leq t$ and $s + \tau(L(L + 1) + s) \leq t$ from (\ref{form10}) and (\ref{form11}) are automatically satisfied. With these notations, we have
\begin{displaymath} \|h_{i}\|_{L^{1}(I)} \lesssim R^{-i}, \quad \|h_{i}^{(l)}\|_{L^{\infty}(I)} \lesssim_{l} B^{il} \text{ for } 1 \leq l \leq L, \quad B^{\alpha}\tilde{R}^{\alpha/L} \leq \frac{R}{\tilde{R}} \leq B\tilde{R}^{1/L}, \end{displaymath} 
whence Lemma \ref{series} yields
\begin{displaymath} \dim \left\{\lambda \in I : \sum_{i = 2}^{\infty} 2^{i(3 - s)}h_{i}(\lambda) = \infty\right\} \leq 1 - \alpha. \end{displaymath} 
The proof of Lemma \ref{thm1}(i) is finished by letting $\alpha \nearrow s - 1$. 

In the case (ii) of Lemma \ref{thm1}, we first set $s(\tau) := s + \tau^{1/3} < 2$ and define the functions $\tilde{h}_{i}$ in the same way as the functions $h_{i}$, only replacing $s$ by $s(\tau)$ in the definition. Then we write $\alpha := s - 1 < s(\tau) - 1$ and choose $r := 1 - \tau^{1/3}/2$. If $L$ is an integer satisfying $4\tau^{-1/3} \leq L \leq 8\tau^{-1/3}$, we then have
\begin{displaymath} \alpha + \alpha(3 - s(\tau))/L \leq \alpha + 2/L \leq s - 1 + \tau^{1/3}/2 = r + s(\tau) - 2, \end{displaymath}
which means that
\begin{displaymath} B^{\alpha}\tilde{R}^{\alpha/L} \leq \frac{R}{\tilde{R}} \leq 2 \leq B\tilde{R}^{1/L} \end{displaymath} 
with $B = 2$, $R = 2^{r + 1}$ and $\tilde{R} = 2^{3 - s(\tau)}$. Moreover, the proofs above show (nothing more is required than the 'change of variables' $s \mapsto s(\tau)$) that the estimates
\begin{displaymath} \|\tilde{h}_{i}\|_{L^{1}(I)} \lesssim R^{-i} \quad \text{and} \quad \|\tilde{h}^{(l)}_{i}\|_{L^{\infty}(I)} \lesssim_{l} B^{il} \text{ for } 1 \leq l \leq L, \end{displaymath} 
hold, this time provided that $(1 + A\tau)s(\tau) \leq t$ and $s(\tau) + \tau(L(L + 1) + s(\tau)) \leq t$. The first condition says that $t \geq s + \tau^{1/3} + As\tau + A\tau^{4/3} =: s + \varepsilon_{1}(\tau)$. On the other hand, the upper bound on $L$ gives the estimate
\begin{displaymath} s(\tau) + \tau(L(L + 1) + s(\tau)) \leq s + \tau^{1/3} + 128\tau^{1/3} + \tau(s + \tau^{1/3}) =: s + \varepsilon_{2}(\tau). \end{displaymath} 
Thus, if $t \geq s + \varepsilon(\tau) := s + \max\{\varepsilon_{1}(\tau),\varepsilon_{2}(\tau)\}$, we again have
\begin{displaymath} \dim \left\{\lambda \in I : \sum_{i = 2}^{\infty} 2^{i(3 - s(\tau))}\tilde{h}_{i}(\lambda) = \infty\right\} \leq 1 - \alpha = 2 - s. \end{displaymath}
Arguing in the same way as we arrived at (\ref{form8}), this even proves that
\begin{displaymath} \dim\left\{\lambda \in J : \int_{\R} I_{s(\tau) - 1}(\mu_{\lambda,t}) \, dt = \infty \right\} \leq 2 - s. \end{displaymath} 
The proof of Lemma \ref{thm1}(ii) is finished, since $I_{s - 1}(\mu_{\lambda,t}) \lesssim_{\lambda} I_{s(\tau) - 1}(\mu_{\lambda,t})$ for $\lambda \in J$ and $t \in \R$.
\end{proof}

\section{Proof of Theorem \ref{main} and Applications}

\begin{remark}\label{form3} If $B \subset \Omega$ is a Borel set with $\dim B > 1$, and $\spt \mu \subset B$, the previous lemma can be used to extract information on the dimension of $\Psi_{\lambda}(B) \cap \rho_{2}^{-1}\{t\}$ for various $\lambda \in J$ and $t \in \R$, since $\spt \mu_{\lambda,t} \subset \Psi_{\lambda}(B) \cap \rho_{2}^{-1}\{t\}$. Of course, we were originally interested in knowledge concerning $\dim [B \cap \pi_{\lambda}^{-1}\{t\}]$. Fortunately $\pi_{\lambda} = \rho_{2} \circ \Psi_{\lambda}$, so that
\begin{displaymath} \Psi_{\lambda}(B \cap \pi_{\lambda}^{-1}\{t\}) = \Psi_{\lambda}(B) \cap \rho_{2}^{-1}\{t\}. \end{displaymath}
By the H\"older continuity, recall (\ref{holder}), of the mappings $\Psi_{\lambda}$, we then have
\begin{displaymath} \dim [B \cap \pi_{\lambda}^{-1}\{t\}] \geq (1 - \tau)\dim [\Psi_{\lambda}(B) \cap \rho_{2}^{-1}\{t\}] \end{displaymath}  
for $\lambda \in J$ and $t \in \R$.

\end{remark}

\begin{proof}[Proof of Theorem \ref{main}] Let us first prove the case $\tau = 0$. Assume that $\calH^{s}(B) > 0$. We claim that the set
\begin{displaymath} E :=  \{\lambda \in J : \calL^{1}(\{t \in \R : \dim [B \cap \pi_{\lambda}^{-1}\{t\}] \geq s - 1\}) = 0\} \end{displaymath}
satisfies $\dim E \leq 2 - s$. By Frostman's lemma (the version for metric spaces, see \cite{Ho}), we may choose a non-trivial compactly supported Radon measure $\mu$ on $\Omega$ with $\spt \mu \subset B$ and $\mu(B(x,r)) \leq r^{s}$ for $x \in \Omega$ and $r > 0$. Then $I_{r}(\mu) < \infty$ for every $1 < r < s$, so that by Theorem \ref{sobolev} we have the estimate
\begin{equation}\label{form4} \dim \{\lambda \in J : \pi_{\lambda\sharp}\mu \not\ll \calL^{1}\} \leq 2 - s. \end{equation}
By (\ref{form4}) and Lemma \ref{thm1}, the set
\begin{displaymath} E_{\mu}^{j} := \left\{\lambda \in J : \pi_{\lambda\sharp}\mu \not\ll \calL^{1} \text{ or } \int_{\R} I_{(s - 1/i) - 1}(\mu_{\lambda,t}) \, dt = \infty \text{ for some } i \geq j \right\} \end{displaymath}
has dimension $\dim E_{\mu}^{j} \leq 2 - s + 1/j$. If $\lambda \in J \setminus E_{\mu}^{j}$, we know that the set $N_{\mu,\lambda} := \{t \in \R : \exists \, \mu_{\lambda,t} \text{ and } \mu_{\lambda,t} \not\equiv 0\}$ has positive $\calL^{1}$ measure.\footnote{This follows from the equation $\rho_{2\sharp}(\Psi_{\lambda\sharp}\mu) = \pi_{\lambda\sharp}\mu$: whenever $\pi_{\lambda\sharp}\mu \ll \calL^{1}$, we also have $\rho_{2\sharp}(\Psi_{\lambda\sharp}\mu) \ll \calL^{1}$, and $\calL^{1}(N_{\mu,\lambda}) > 0$ in this case, see Definition \ref{slices}.} Also, by definition of $E_{\mu}^{j}$, we know that for $\calL^{1}$ almost every $t \in N_{\mu,\lambda}$ the measure $\mu_{\lambda,t}$ has finite $(s - 1 - 1/i)$-energy \emph{for every $i \geq j$}: in particular, 
\begin{displaymath} \dim [B \cap \pi_{\lambda}^{-1}\{t\}] \geq \dim [\Psi_{\lambda}(B) \cap \rho_{2}^{-1}\{t\}] \geq \dim (\spt \mu_{\lambda,t}) \geq s - 1 \end{displaymath}
for these $t \in N_{\mu,\lambda}$. Thus $\calL^{1}(\{t \in \R : \dim [B \cap \pi_{\lambda}^{-1}\{t\}] \geq s - 1\}) \geq \calL^{1}(N_{\mu,\lambda}) > 0$, and we conclude that $\lambda \in J \setminus E$. It follows that $E \subset E_{\mu}^{j}$, whence $\dim E \leq 2 - s + 1/j$. Letting $j \to \infty$ now shows that $\dim E \leq 2 - s$, and 
\begin{displaymath} \calL^{1}(\{t \in \R : \dim [B \cap \pi_{\lambda}^{-1}\{t\}] \geq s - 1\}) > 0, \qquad \lambda \in J \setminus E. \end{displaymath}

Next assume that $\tau > 0$, and $\dim B = s$. Let $\delta_{0}(\tau) = 2\varepsilon(\tau)$, where $\varepsilon(\tau)$ is the constant from the previous theorem. Then choose a measure $\mu$ on $\Omega$, supported on $B$, such that $I_{s - \varepsilon(\tau)} < \infty$. Then, as $s - \varepsilon(\tau) = (s - \delta_{0}(\tau)) + \varepsilon(\tau)$, Lemma \ref{thm1} asserts that
\begin{displaymath} \dim \left\{ \lambda \in J : \int_{\R} I_{s - 1 - \delta_{0}(\tau)}(\mu_{\lambda,t}) \, dt = \infty \right\} \leq 2 - s + \delta_{0}(\tau). \end{displaymath} 
On the other hand, Theorem \ref{sobolev} gives
\begin{displaymath} \dim\{\lambda \in J : \pi_{\lambda\sharp}\mu \not\ll \calL^{1}\} \leq 2 - \frac{s - \varepsilon(\tau)}{1 + a_{0}\tau} \leq 2 - s + \frac{\varepsilon(\tau) + 2a_{0}\tau}{1 + a_{0}\tau} =: 2 - s + \delta_{1}(\tau). \end{displaymath} 
This means, again, that the set $N_{\mu,\lambda} = \{t \in \R : \exists \, \mu_{\lambda,t} \text{ and } \mu_{\lambda,t} \not\equiv 0\}$ has positive $\calL^{1}$ measure for all $\lambda \in J$ except for a set of dimension at most $2 - s + \delta_{1}(\tau)$. Writing $\delta(\tau) = \max\{\delta_{0}(\tau) + \tau,\delta_{1}(\tau)\}$, we may conclude that there exists a set $E \subset J$ of dimension at most $2 - s + \delta(\tau)$ such that
\begin{align*} \dim [B \cap \pi_{\lambda}^{-1}\{t\}] & \geq (1 - \tau)\dim [\Psi_{\lambda}(B) \cap \rho_{2}^{-1}\{t\}]\\
& \geq (1 - \tau) \dim (\spt \mu_{\lambda,t})\\
& \geq (1 - \tau)(s - 1 - \delta_{0}(\tau)) \geq s - 1 - \delta(\tau) \end{align*}
holds for $\lambda \in J \setminus E$ and for $\calL^{1}$ almost all $t \in N_{\mu,\lambda}$. In short,
\begin{displaymath} \calL^{1}(\{t \in \R : \dim[B \cap \pi_{\lambda}^{-1}\{t\}] \geq s - 1 - \delta(\tau)\}) = \calL^{1}(N_{\mu,\lambda}) > 0, \quad \lambda \in J \setminus E.\end{displaymath} 
\end{proof}

\begin{remark}\label{sharp} For later use, let us note that the proof above yields the following result: \emph{if $\mu$ is a Borel measure on $\Omega$ with $I_{s}(\mu) < \infty$ for some $1 < s < 2$, then there exists a set $E \subset J$ of dimension $\dim E \leq 2 - s + \delta(\tau)$ such that for every $\lambda \in J \setminus E$ and $\pi_{\lambda\sharp}\mu$ almost every $t \in \R$} it holds that $\pi_{\lambda\sharp}\mu \ll \calL^{1}$ and $\dim [\spt \mu \cap \pi_{\lambda}^{-1}\{t\}] \geq s - 1 - \delta(\tau)$. Indeed, as shown above, the inequality $\dim [\spt \mu \cap \pi_{\lambda}^{-1}\{t\}] \geq s - 1 - \delta(\tau)$ holds for $\calL^{1}$ almost every $t \in N_{\mu,\lambda}$. Moreover, as noted in Definition and Theorem \ref{slices}, we have $\pi_{\lambda\sharp}\mu(\R \setminus N_{\mu,\lambda}) = 0$ whenever $\pi_{\lambda\sharp}\mu \ll \calL^{1}$. 
\end{remark}

\subsection{Applications and Remarks on the Sharpness of Theorem \ref{main}}\label{sharpness}

As the first application of Theorem \ref{main}, we consider orthogonal projections to lines in $\R^{2}$. Namely, let $\pi_{\lambda}(x) := x \cdot (\cos \lambda,\sin \lambda)$ for $x \in \R^{2} = \Omega$ and $\lambda \in (0,2\pi)$. Then $\pi_{\lambda}$ is the orthogonal projection onto the line spanned by the vector $(\cos \lambda, \sin \lambda) \in S^{1}$. These projections are perhaps the most basic example of generalized projections: transversality with $\tau = 0$ follows immdediately from the equation
\begin{displaymath} |\Phi_{\lambda}(x,y)|^{2} + |\partial_{\lambda}\Phi_{\lambda}(x,y)|^{2} = 1, \qquad x,y \in \R^{2}, \end{displaymath}
and the other conditions are even easier to verify. Applied to these projections Theorem \ref{main} assumes the form
\begin{cor} Let $B \subset \R^{2}$ be a Borel set with $0 < \calH^{s}(B) < \infty$ for some $s > 1$. Denote by $L_{\lambda,t}$ the line $L_{\lambda,t} = \{x \in \R^{2} : x \cdot (\cos \lambda,\sin \lambda) = t\} = \pi_{\lambda}^{-1}\{t\}$. Then there exists a set $E \subset (0,2\pi)$ of dimension $\dim E \leq 2 - s$ such that
\begin{displaymath} \calL^{1}(\{t \in \R : \dim [B \cap L_{\lambda,t}] \geq s - 1\}) > 0, \qquad \lambda \in (0,2\pi) \setminus E. \end{displaymath}
\end{cor}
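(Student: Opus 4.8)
The corollary is a direct specialization of Theorem \ref{main}, so the plan is only to (a) reduce the ambient space to a compact one, as required by Definition \ref{projections}, and (b) verify hypotheses (i)--(iii) for the orthogonal projections $\pi_\lambda$ with $\tau=0$.

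For the reduction in (a): since $0<\calH^s(B)<\infty$ we have $\dim B=s$, and it is a standard fact of geometric measure theory (see e.g. \cite{Mat3}) that $B$ then contains a compact subset $K$ with $\calH^s(K)>0$; necessarily $\dim K=s$, since $\calH^s(K)>0$ gives $\dim K\ge s$ while $K\subseteq B$ gives $\dim K\le \dim B=s$. Moreover $1<s<2$ automatically, because a set of positive $\calH^s$-measure in $\R^2$ forces $s\le 2$, and the boundary value $s=2$ is excluded by the hypotheses of Theorem \ref{main}, so we assume $s<2$. Because $\dim[B\cap L_{\lambda,t}]\ge\dim[K\cap L_{\lambda,t}]$ for every $\lambda$ and $t$, it suffices to prove the statement with $K$ in place of $B$. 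Thus I take $\Omega:=K$ equipped with the Euclidean metric $d$, and $J:=(0,2\pi)$.

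For (b): for $l\in\N$ we have $|\partial_\lambda^l\pi_\lambda(x)|=|x\cdot\partial_\lambda^l(\cos\lambda,\sin\lambda)|\le|x|$, which is bounded on the compact set $\Omega$, giving (i). For $x\ne y$ in $\Omega$, writing $(x-y)/|x-y|=(\cos\theta,\sin\theta)$ yields $\Phi_\lambda(x,y)=\cos(\theta-\lambda)$, so every $\lambda$-derivative of $\Phi_\lambda(x,y)$ is $\pm\cos(\theta-\lambda)$ or $\pm\sin(\theta-\lambda)$, hence bounded by $1$; as $d$ is bounded on $\Omega$, this is regularity (iii) of order $\tau=0$, and a fortiori of every order $\tau>0$. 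Finally, the identity $|\Phi_\lambda(x,y)|^2+|\partial_\lambda\Phi_\lambda(x,y)|^2=1$ shows that $|\Phi_\lambda(x,y)|\le 1/\sqrt2$ forces $|\partial_\lambda\Phi_\lambda(x,y)|\ge 1/\sqrt2$, so transversality (ii) of order $0$ holds with $\delta_{I,0}=1/\sqrt2$ for every compact $I\subset J$. Now Theorem \ref{main}, applied with $\tau=0$ — for which $\delta(\tau)=0$ — to the Borel set $K\subset\Omega$ with $\dim K=s$ and $\calH^s(K)>0$, produces a set $E\subseteq(0,2\pi)$ with $\dim E\le 2-s$ such that $\calL^1(\{t\in\R:\dim[K\cap\pi_\lambda^{-1}\{t\}]\ge s-1\})>0$ for all $\lambda\in(0,2\pi)\setminus E$; since $\pi_\lambda^{-1}\{t\}=K\cap L_{\lambda,t}$ and $\dim[B\cap L_{\lambda,t}]\ge\dim[K\cap L_{\lambda,t}]$, this is exactly the assertion of the corollary.

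There is essentially no obstacle: the whole content is that orthogonal projections fit the Peres--Schlag formalism with $\tau=0$, together with the routine passage from $\R^2$ to a compact subset of $B$ carrying positive $\calH^s$-measure. The only points worth stating explicitly are that this reduction is legitimate, because the Hausdorff dimension of a slice of $B$ dominates that of the corresponding slice of any subset, and that $\calH^s(K)>0$ together with $K\subseteq B$ guarantees $\dim K=s$, so Theorem \ref{main} applies verbatim.
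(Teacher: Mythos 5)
Your proposal is correct and follows the same route as the paper: the paper likewise obtains the corollary by observing that the orthogonal projections satisfy Definition \ref{projections} with $\tau = 0$ (transversality coming from the identity $|\Phi_{\lambda}(x,y)|^{2} + |\partial_{\lambda}\Phi_{\lambda}(x,y)|^{2} = 1$) and then invoking Theorem \ref{main}. You merely spell out the routine verifications and the passage to a compact subset $K \subset B$ with $\calH^{s}(K) > 0$, which the paper leaves implicit.
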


In fact, the inequality $\dim [B \cap L_{\lambda,t}] \geq s - 1$ above could be replaced by equality: indeed, if $\pi \colon \R^{2} \to \R$ is any Lipschitz map, then $\dim [B \cap \pi^{-1}\{t\}] \leq \dim B - 1$ for almost every $t \in \R$. This follows from \cite[Theorem 7.7]{Mat3}.

In the case of orthogonal projections, the bound obtained above for $\dim E$ is sharp. To see this, first note that if $B \subset \R^{2}$ is a Borel set with $\dim B > 1$, and if $\lambda \in (0,2\pi)$ is any parameter such that $\calL^{1}(\{t : \dim [B \cap L_{\lambda,t}] \geq \dim B - 1\}) > 0$, then it follows immediately that $\calL^{1}(\pi_{\lambda}(B)) > 0$. On the other hand, it is known that a compact set $B \subset \R^{2}$ exists with $s = \dim B > 1$, and such that
\begin{equation}\label{FKMP} \dim \{\lambda \in (0,2\pi) : \calL^{1}(\pi_{\lambda}(B)) = 0\} = 2 - s. \end{equation}
This yields
\begin{displaymath} \dim \{\lambda \in (0,2\pi) : \calL^{1}(\{t : \dim [B \cap L_{\lambda,t}] \geq s - 1\}) = 0\} \geq 2 - s \end{displaymath}
for this particular set $B$, so that the bound $\dim E \leq 2 - s$ in the previous corollary cannot be improved. The construction of the compact set $B$ satisfying (\ref{FKMP}) is essentially the same as that of a similar counter-example obtained by Kaufman and Mattila in \cite{KM}. The applicability of the example in \cite{KM} to this situation was observed by K. Falconer in his paper \cite{Fa}, and the full details were worked out by A. Peltomäki in his licenciate thesis \cite{Pe}.

Before discussing another application, we consider the following question. Let $B \subset \R^{2}$ be a Borel set with $\calH^{s}(B) > 0$ for some $s > 1$, and let $(\pi_{\lambda})_{\lambda \in J}$ be a family of projections satisfying the regularity and transversality conditions of Definition \ref{projections} with $\tau = 0$. Then we have the bounds
\begin{equation}\label{exception1} \dim \{\lambda \in J : \calL^{1}(\{t \in \R : \dim [B \cap \pi_{\lambda}^{-1}\{t\}] \geq s - 1\}) = 0\} \leq 2 - s, \end{equation} 
given by Theorem \ref{main}, and
\begin{equation}\label{exception2} \dim \{\lambda \in J : \calL^{1}(\pi_{\lambda}(B)) = 0\} \leq 2 - s, \end{equation}
by Theorem \ref{sobolev}. Moreover, any reader familiar with the proofs of Peres and Schlag in \cite{PS} will note that the projection results there and the slicing result here are very strongly connected. Thus, one might wonder if the 'exceptional sets' of dimension at most $2 - s$ on lines \eqref{exception1} and \eqref{exception2} might, in fact, coincide. In other words the question is, does $\calL^{1}(\pi_{\lambda}(B)) > 0$ imply $\calL^{1}(\{t \in \R : \dim [B \cap \pi_{\lambda}^{-1}\{t\}] \geq s - 1\}) > 0$ (the converse implication is trivial and true, as noted above)? The answer is negative (for orthogonal projections, at least), as can be seen by considering the the self-affine fractal $F \subset [0,1] \times [0,1]$ obtained by iterating the scheme below:

\begin{center}
\includegraphics[scale = 0.9]{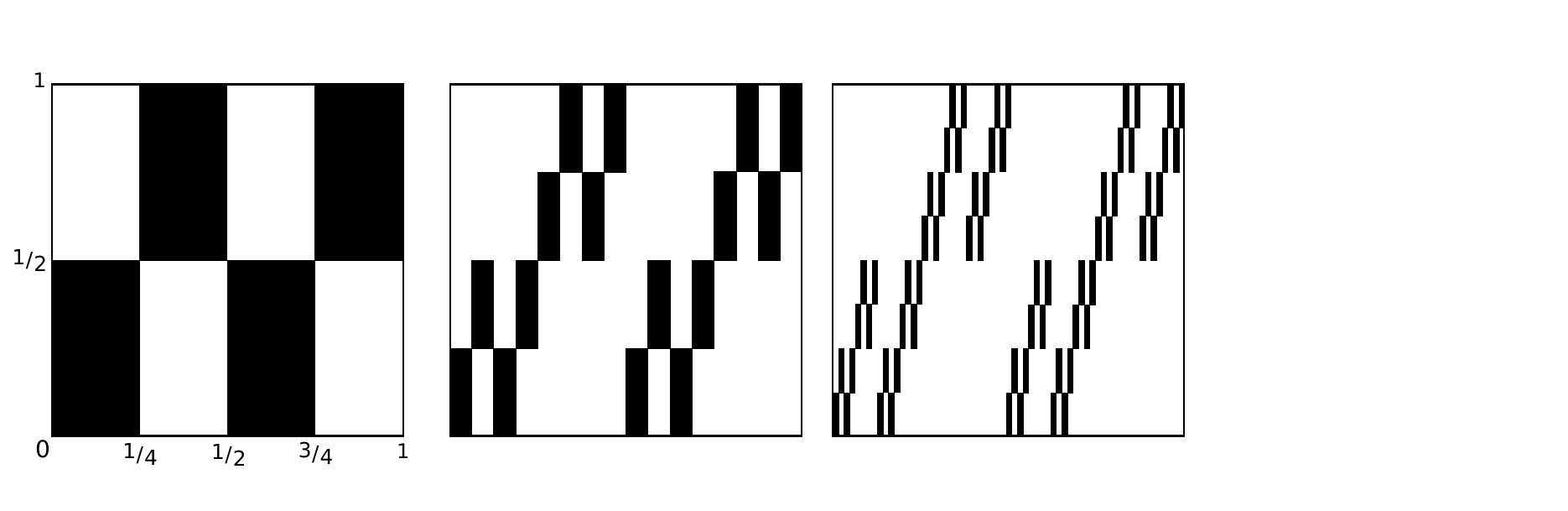}
\end{center}
It is easy to prove that if $\mu$ is the uniformly distributed measure on $F$, then $\mu(Q) \lesssim d(Q)^{3/2}$ for all cubes $Q \subset \R^{2}$, whence $\calH^{3/2}(F) > 0$. Also, the projection of $F$ onto the real axis is the interval $[0,1]$. However, the intersection of any vertical line with $F$ contains at most two points, and, in particular, the dimension of these vertical slices of $F$ have dimension zero instead of $3/2 - 1 = 1/2$.

We finish this section with another application of Theorem \ref{main}. As noted in Remark \ref{ps}, there exist families of projections, which only satisfy the regularity and transversality requirements of Definition \ref{projections} for strictly positive parameters $\tau > 0$. Several such examples are presented in the original paper by Peres and Schlag, so here and now we will content ourselves with just one, the \emph{Bernoulli convolutions}. Let $\Omega = \{-1,1\}^{\N}$ and let $\mu$ be the product measure on $\Omega$. For $\lambda \in (0,1)$, consider the projections
\begin{displaymath} \pi_{\lambda}(\omega) := \sum_{n = 0}^{\infty} \omega_{n}\lambda^{n}, \qquad \omega = (\omega_{1},\omega_{2},\ldots) \in \Omega. \end{displaymath}
Our result yields some information on the number of distinct solutions $\omega \in \Omega$ for the equation $\pi_{\lambda}(\omega) = t$, for $\calL^{1}$ almost every $t \in [-1/(1 - \lambda),1/(1 - \lambda)] =: I_{\lambda}$ (the equation obviously has \emph{no} solutions for $t$ outside $I_{\lambda}$). If $(a,b) \subset (0,1)$, write $G(a,b) := \{\lambda \in (a,b) :  \text{number of solutions to } \pi_{\lambda}(\omega) = t \text{ is uncountable for a.e. } t \in I_{\lambda}\}$.

\begin{cor} If $(a,b) \subset (2^{-1},2^{-2/3}] \approx (0.5,0.63]$, we have
\begin{displaymath} \dim [(a,b) \setminus G(a,b)] \leq 2 - \frac{\log 2}{-\log a}. \end{displaymath} 
\end{cor}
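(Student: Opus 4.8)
The strategy is to deduce the corollary from Theorem~\ref{main}, in the sharper form of Remark~\ref{sharp}, applied to $\Omega = \{-1,1\}^{\N}$ with $B = \Omega$ and $\mu$ the product measure. The underlying mechanism is this: suppose $\lambda \in (a,b)$ lies outside a suitable exceptional set and satisfies $\pi_{\lambda\sharp}\mu \ll \calL^{1}$ together with $\dim[\Omega \cap \pi_{\lambda}^{-1}\{t\}] > 0$ for $\pi_{\lambda\sharp}\mu$-almost every $t$. Then $\pi_{\lambda}^{-1}\{t\}$ is uncountable for such $t$. Moreover $\pi_{\lambda\sharp}\mu = \nu_{\lambda}$ is the Bernoulli convolution, with $\spt \nu_{\lambda} = I_{\lambda}$; and for $\lambda \in (1/2,1)$ an absolutely continuous $\nu_{\lambda}$ is in fact equivalent to Lebesgue measure on $I_{\lambda}$, as follows by iterating the self-similarity $\nu_{\lambda} = \tfrac{1}{2}(S_{+})_{\sharp}\nu_{\lambda} + \tfrac{1}{2}(S_{-})_{\sharp}\nu_{\lambda}$ with $S_{\pm}(x) = \lambda x \pm 1$, and observing that for $\lambda > 1/2$ the $2^{m}$ rescaled translates of $\nu_{\lambda}$ overlap and cover $I_{\lambda}$. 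Hence, for such $\lambda$, $\pi_{\lambda}^{-1}\{t\}$ is uncountable for $\calL^{1}$-almost every $t \in I_{\lambda}$, i.e.\ $\lambda \in G(a,b)$.

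To run this argument we must equip $\Omega$ with a suitable metric. Fix $\theta \in (1/2,1)$ and put $d_{\theta}(\omega,\omega') = \theta^{\,n(\omega,\omega')}$, where $n(\omega,\omega')$ is the least index at which $\omega$ and $\omega'$ differ. Then $\mu(B(\omega,r)) \asymp r^{s(\theta)}$ with $s(\theta) := \frac{\log 2}{-\log \theta}$, so $\dim(\Omega,d_{\theta}) = s(\theta)$, $I_{s'}(\mu) < \infty$ for every $s' < s(\theta)$, and $1 < s(\theta) < 2$ precisely for $\theta \in (1/2, 2^{-1/2})$. Condition (i) of Definition~\ref{projections} is immediate. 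For (ii) and (iii), if $N = n(\omega,\omega')$ then
\begin{displaymath}
\Phi_{\lambda}(\omega,\omega') = \Big(\frac{\lambda}{\theta}\Big)^{N}\Big(\omega_{N} - \omega_{N}' + \sum_{k \geq 1}(\omega_{N+k} - \omega_{N+k}')\lambda^{k}\Big),
\end{displaymath}
so, apart from the factor $(\lambda/\theta)^{N} \leq 1$ present when $\lambda \leq \theta$, this is exactly the quantity whose transversality and regularity Peres and Schlag analysed for Bernoulli convolutions \cite{PS}. Transferring their estimates, one obtains: if $I \subset (0,2^{-2/3}]$ is a compact interval and $\theta$ is chosen equal to (or just above) $\sup I$, then $(\pi_{\lambda})_{\lambda \in I}$ satisfies (ii) and (iii) on $I$ with a transversality/regularity order $\tau = \tau(I)$ such that $\tau(I) \to 0$ as $|I| \to 0$; hence the constant $\delta(\tau(I))$ of Theorem~\ref{main} also tends to $0$. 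It is the need for the underlying Peres--Schlag transversality estimate to be available for the relevant $\lambda$ that forces $(a,b) \subset (2^{-1}, 2^{-2/3}]$.

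We now combine these ingredients by a double limiting procedure. Fix $\delta > 0$ and $\varepsilon > 0$, and cover the compact interval $[a+\delta, b-\delta]$ by finitely many closed intervals $I_{1}, \dots, I_{M} \subset (a, b]$, each of length at most $\varepsilon$. For each $j$ set $\theta_{j} := \sup I_{j}$; since the $I_{j}$ cover $a+\delta$ we have $\theta_{j} > a$ and $\min_{j}\theta_{j} \geq a+\delta$, while $\theta_{j} \leq b \leq 2^{-2/3}$. Working in the metric $d_{\theta_{j}}$, pick $s_{j}$ with $\frac{\log 2}{-\log a} < s_{j} < s(\theta_{j}) < 2$ (possible since $\theta_{j} > a$). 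As $I_{s_{j}}(\mu) < \infty$, Remark~\ref{sharp} provides a set $E_{j} \subset (0,1)$ with $\dim E_{j} \leq 2 - s_{j} + \delta(\tau(I_{j}))$ such that every $\lambda \in I_{j} \setminus E_{j}$ satisfies $\pi_{\lambda\sharp}\mu \ll \calL^{1}$ and $\dim[\Omega \cap \pi_{\lambda}^{-1}\{t\}] \geq s_{j} - 1 - \delta(\tau(I_{j})) > 0$ for $\pi_{\lambda\sharp}\mu$-almost every $t$ --- the positivity, and the hypothesis $s_j + \tau(I_j)^{1/3} < 2$ of Lemma~\ref{thm1}(ii), both holding once $\varepsilon$ is small, since $\tau(I_j) \to 0$. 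By the first paragraph, such $\lambda$ lie in $G(a,b)$; therefore $[a+\delta, b-\delta] \setminus G(a,b) \subset \bigcup_{j=1}^{M} E_{j}$, a finite union, and (letting $s_{j} \uparrow s(\theta_{j})$)
\begin{displaymath}
\dim\big[[a+\delta, b-\delta] \setminus G(a,b)\big] \leq \max_{1 \leq j \leq M}\big(2 - s(\theta_{j}) + \delta(\tau(I_{j}))\big) \leq 2 - \frac{\log 2}{-\log(a+\delta)} + o_{\varepsilon}(1),
\end{displaymath}
since $s(\theta_{j}) \geq s(\min_{j}\theta_{j}) \geq s(a+\delta)$ and $\delta(\tau(I_{j})) \to 0$ uniformly in $j$ as $\varepsilon \to 0$. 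Letting $\varepsilon \to 0$ removes the $o_{\varepsilon}(1)$; then letting $\delta \to 0$, together with $(a,b) \setminus G(a,b) = \bigcup_{\delta > 0}\big([a+\delta, b-\delta] \setminus G(a,b)\big)$ (an increasing union), yields $\dim[(a,b) \setminus G(a,b)] \leq 2 - \frac{\log 2}{-\log a}$.

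The main obstacle is the input invoked in the second paragraph: verifying that the Bernoulli family fits Definition~\ref{projections} with respect to the rescaled metrics $d_{\theta}$ and, above all, that the transversality/regularity order can be taken to vanish with the length of the parameter interval. This is a variant of the transversality analysis of Peres and Schlag, and it is simultaneously what produces and what limits the admissible range $(2^{-1}, 2^{-2/3}]$. A smaller technical point is the passage from "$\pi_{\lambda\sharp}\mu$-almost every $t$" to "$\calL^{1}$-almost every $t \in I_{\lambda}$", which rests on the equivalence of an absolutely continuous Bernoulli convolution with Lebesgue measure on $I_{\lambda}$.
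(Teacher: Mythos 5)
Your proposal is correct and follows essentially the same route as the paper: subdivide the parameter interval into short pieces so that the Peres--Schlag transversality order $\tau$ (and hence $\delta(\tau)$) is small, equip $\Omega$ with the rescaled metric $d_{\theta}(\omega,\omega')=\theta^{\,n(\omega,\omega')}$ with $\theta$ the right endpoint of each piece, apply Remark \ref{sharp} to the product measure on each piece, upgrade ``$\pi_{\lambda\sharp}\mu$-a.e.\ $t$'' to ``$\calL^{1}$-a.e.\ $t\in I_{\lambda}$'' via the equivalence of an absolutely continuous Bernoulli convolution with $\calL^{1}\llcorner I_{\lambda}$, and let the subdivision refine. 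The only cosmetic differences are that the paper cites Mauldin--Simon \cite{MS} for that equivalence where you sketch the self-similarity argument, and that it subdivides $(a,b)$ directly rather than exhausting it by compact subintervals.
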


\begin{proof} The proof is essentially the same as the proof of \cite[Theorem 5.4]{PS}, but we provide it here for completeness. We may assume that $a > 1/2$. Fix $\tau > 0$ and divide the interval $(a,b)$ into finitely many subintervals $J_{i} = (a_{i},b_{i})$ satisfying $a_{i} > b_{i}^{1 + \tau}$. Given $J_{i}$, define the metric $d_{i}$ on $\Omega$ by setting $d_{i}(\omega^{1},\omega^{2}) := b_{i}^{|\omega^{1} \land \,\omega^{2}|}$, where $|\omega^{1} \land \,\omega^{2}| = \min\{j \geq 0 : \omega^{1}_{j} \neq \omega^{2}_{j}\}$. As shown in \cite[Lemma 5.3]{PS}, the projections $\pi_{\lambda}$ satisfy the requirements of Definition \ref{projections} on $J_{i}$ with the metric $d_{i}$ and the fixed constant $\tau > 0$. 

Let $\mu$ be the product measure on $\Omega$: then $I_{s}(\mu) < \infty$ with respect to $d_{i}$, if and only if $b_{i}^{s} > 1/2$, if and only if $s  < -\log 2/\log b_{i}$. Let $s_{i} := -\log 2/\log b_{i} - \delta(\tau)$, where $\delta(\tau)$ is the constant from Theorem \ref{main}. Using Remark \ref{sharp}, one then finds a set $E_{i} \subset J_{i}$ of dimension 
\begin{displaymath} \dim E_{i} \leq 2 - s_{i} + \delta(\tau) \leq 2 + \log 2/\log a + 2\delta(\tau) \end{displaymath}
such that $\pi_{\lambda\sharp}\mu \ll \calL^{1}$ and 
\begin{displaymath} \dim \pi_{\lambda}^{-1}\{t\} = \dim [\spt \, \mu \cap \pi_{\lambda}^{-1}\{t\}] \geq s_{i} - \delta(\tau) \end{displaymath}
for every $\lambda \in J_{i} \setminus E_{i}$ and $\pi_{\lambda\sharp}\mu$ almost every $t \in \R$. By now it is well-known, see \cite{MS}, that $\pi_{\lambda\sharp}\mu \ll \calL^{1}$ implies $\calL^{1}\llcorner I_{\lambda} \ll \pi_{\lambda\sharp}\mu$. Thus, we see that the inequality $\dim \pi_{\lambda}^{-1}\{t\} \geq s_{i} - \delta(\tau)$ holds for $\calL^{1}$ almost every $t \in I_{\lambda}$. Also, we clearly have $\card(\{\omega \in \Omega : \pi_{\lambda}(\omega) = t\}) > \aleph_{0}$, whenever $\dim \pi_{\lambda}^{-1}\{t\} \geq s_{i} - \delta(\tau) > 0$, that is, whenever $\tau > 0$ is small enough. Thus $\dim [(a,b) \setminus G(a,b)] \leq 2 + \log 2/\log a + 2\delta(\tau)$. The proof is finished by letting $\tau \to 0$.

\end{proof}

\begin{remark} The proof above does not work with $(2^{-1},2^{-2/3}]$ replaced with $(2^{-1},1]$. This unfortunate fact is concealed in our reference to \cite[Lemma 5.3]{PS} above: the transversality conditions cannot be verified for the projections $\pi_{\lambda}$ on the whole interval $(2^{-1},1]$. Our methods reveal nothing on the dimension of the sets $(a,b) \setminus G(a,b)$ for intervals $(a,b)$ outside $(2^{-1},2^{-2/3})$.

\end{remark}

\subsection{Further Generalisations}\label{further}

In \cite{PS} Peres and Schlag obtained results analogous to Theorem \ref{sobolev} for $\R^{m}$-valued projections with an $\R^{n}$-valued parameter set. It seems reasonable to conjecture that an analogue of Theorem \ref{main} would also hold for such projections, but the technical details might be rather tedious. In some special circumstances, even the inherently one-dimensional Theorem \ref{main} can be used to deduce results for projections with a multidimensional parameter set. For example, let us consider the projections $\pi_{\lambda} \colon \R^{2} \to [0,\infty)$, $\lambda \in \R^{2}$, defined by $\pi_{\lambda}(x) := |\lambda - x|^{2}$. Assume that $B \subset \R^{2}$ is a Borel set with $\calH^{s}(B) > 0$ for some $s > 3/2$.
\begin{claim} There exists a point $\lambda \in B$ such that
\begin{equation}\label{form14} \calL^{1}(\{r > 0 : \dim [B \cap S(\lambda,r)] \geq s - 1\}) > 0, \end{equation} 
where $S(\lambda,r) = \pi_{\lambda}^{-1}\{r^{2}\} = \{y : |\lambda - y| = r\}$. 
\end{claim}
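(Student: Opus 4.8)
\subsection*{Proof proposal}

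The plan is to reduce the Claim to Theorem \ref{main} by freezing the center $\lambda$ to a line. The family $\pi_{\lambda}(x)=|\lambda-x|^{2}$ has a two-dimensional parameter, so it cannot be fed directly into the one-dimensional Theorem \ref{main}; but if we restrict $\lambda$ to a line $L\subset\R^{2}$ and only let $\pi_{\lambda}$ act on a compact set $K\subset B$ lying far to one side of $L$, then $(\pi_{\lambda}|_{K})_{\lambda\in L}$ should become a genuine family of generalized projections with $\tau=0$, and Theorem \ref{main} will apply with $\delta(\tau)=0$.

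I would begin with reductions. We may assume $3/2<s<2$ (if $s\geq 2$, then $\dim B=2$, so $\calH^{s'}(B)=\infty$ for any $s'\in(3/2,2)$, and we replace $s$ by such an $s'$). Since $\dim B\geq s>1$, Marstrand's slicing theorem \eqref{marstrand2} produces a line $L\subset\R^{2}$ with $\dim(B\cap L)\geq\dim B-1\geq s-1$. Because $s>1$, the line $L$ carries no $\calH^{s}$-mass, so $\calH^{s}(B\setminus L)=\calH^{s}(B)>0$; choosing a side of $L$ on which $B$ still has positive $\calH^{s}$-mass, then a value $n$ for which $\{x\in B:\operatorname{dist}(x,L)>1/n\}$ (on that side) still has positive $\calH^{s}$-mass, then a ball of radius $<1/(4n)$ meeting that set in positive $\calH^{s}$-mass, and finally a compact subset of positive finite $\calH^{s}$-measure inside it (Davies' theorem; see \cite{Mat3}), I obtain a compact $K\subset B$ with $\dim K=s$ (so $\calH^{s}(K)>0$) and $\operatorname{dist}(K,L)>2\operatorname{diam}(K)$.

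Next I would verify (i)--(iii) of Definition \ref{projections} with $\tau=0$ for $(\pi_{\lambda}|_{K})_{\lambda\in J}$, where $L$ is parametrized by arc length, $\lambda=p_{0}+te$, and $J\ni t$ is a bounded open interval to be chosen. A direct computation gives $\Phi_{\lambda}(x,y)=\frac{(x-y)\cdot(x+y-2\lambda)}{|x-y|}$ and $\partial_{\lambda}\Phi_{\lambda}(x,y)=\frac{-2\,e\cdot(x-y)}{|x-y|}$, with all higher $\lambda$-derivatives vanishing; boundedness of $\pi_{\lambda}$ and its derivatives on $J\times K$ is immediate, giving (i) and (iii). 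For transversality (ii): if $|\partial_{\lambda}\Phi_{\lambda}(x,y)|\leq\delta$ then $x-y$ is nearly orthogonal to $e$; decomposing along $(e,e^{\perp})$ and using that $\operatorname{dist}(K,L)>2\operatorname{diam}(K)$ forces $|e^{\perp}\cdot(x+y-2\lambda)|$ to stay bounded below by a positive constant (uniformly over $x,y\in K$, $\lambda\in J$), one finds $|\Phi_{\lambda}(x,y)|\geq 2c\sqrt{1-\delta^{2}/4}-C\delta$ for constants $c,C>0$, hence $|\Phi_{\lambda}(x,y)|>\delta$ once $\delta$ is small enough (depending on $\operatorname{diam}(K),\operatorname{dist}(K,L),|J|$). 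I expect this to be the main obstacle: a line through or near $K$ admits near-reflection pairs $x,y$ across $L$, for which $\Phi_{\lambda}$ and $\partial_{\lambda}\Phi_{\lambda}$ are \emph{simultaneously} small, so transversality genuinely needs $K$ separated from $L$ --- exactly what the reduction arranges.

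Finally I would fix the bounded open interval $J\subset L$ so that $\dim(B\cap L\cap J)\geq s-1$ (possible by countable stability of dimension), and apply Theorem \ref{main} with $\Omega=B=K$, $\tau=0$ and $\calH^{s}(K)>0$: this gives $E\subset J$ with $\dim E\leq 2-s$ and $\calL^{1}(\{t\in\R:\dim[K\cap\pi_{\lambda}^{-1}\{t\}]\geq s-1\})>0$ for all $\lambda\in J\setminus E$. Since $\pi_{\lambda}^{-1}\{t\}=S(\lambda,\sqrt t)$ and the substitution $t=r^{2}$ is bi-Lipschitz away from $0$ (while $t=0$ contributes nothing, as $\dim[K\cap\{\lambda\}]=0$), this is \eqref{form14} with $K$, hence with $B\supset K$, for every $\lambda\in J\setminus E$. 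As $s>3/2$ we have $\dim E\leq 2-s<s-1\leq\dim(B\cap L\cap J)$, so $(B\cap L\cap J)\setminus E\neq\emptyset$; any $\lambda$ in this set lies in $B$ and satisfies \eqref{form14}, which proves the Claim. The hypothesis $s>3/2$ enters precisely here: it is what makes Marstrand's slice $B\cap L$ too large to be contained in the exceptional set $E$.
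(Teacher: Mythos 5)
Your proposal is correct and takes essentially the same route as the paper: find a Marstrand line $L$ with $\dim (B\cap L)\geq s-1>1/2$, restrict the projections $\pi_{\lambda}$, $\lambda\in L$, to a compact subset of $B$ of positive $\calH^{s}$-measure lying on one side of and well separated from $L$ so that Definition \ref{projections} holds with $\tau=0$, apply Theorem \ref{main} to get an exceptional set $E$ with $\dim E\leq 2-s<s-1$, and pick $\lambda\in(B\cap L)\setminus E$. The only difference is that you write out the transversality computation and the $t=r^{2}$ change of variables, which the paper dismisses as an easy exercise.
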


\begin{proof} It follows from the classical line-slicing result of Marstrand (but also from Theorem \ref{main} applied to orthogonal projections) that there exists a line $L \subset \R^{2}$ such that $\dim [B \cap L] > 1/2$. The mappings $\pi_{\lambda}$ with $\lambda \in L \cong \R$ now form a collection of projections with a one-dimensional parameter set. They do not satisfy transversality as projections from the whole plane to $\R$, but this is no issue: choose a compact subset $C \subset B$ with $\calH^{s}(C) > 0$ \emph{lying entirely on one side of, and well separated from, the line $L$.} Then it is an easy exercise to check that the restricted projections $\pi_{\lambda} \colon C \to \R$, $\lambda \in L \cong \R$, satisfy the requirements of Definition \ref{projections} with $\tau = 0$. Thus, according to Theorem \ref{main}, the equation \eqref{form14} holds for all $\lambda \in L \setminus E$, where $\dim E \leq 2 - \dim C < 1/2$. Since $\dim [B \cap L] > 1/2$, we may find a point $\lambda \in [B \cap L] \setminus E$. This finishes the proof.
\end{proof}

\section{Acknowledgments}

I am grateful to my advisor Pertti Mattila for useful comments. I would also like to give many thanks to an anonymous referee for making numerous detailed observations and pointing out several mistakes in the original manuscript. 

\appendix

\section{Proof of Lemma \ref{PSLemma}}

In this section we provide the details for the proof of Lemma \ref{PSLemma}. \emph{The argument is the same as used to prove \cite[Lemma 4.6]{PS}, and we claim no originality on this part}. As mentioned right after Lemma \ref{PSLemma}, the only reason for reviewing the proof here is to make sure that the factor 
\begin{displaymath} \hat{\eta}(2^{j - i}r\partial_{\lambda}\Phi_{\lambda}(x,y))\end{displaymath}
produces no trouble -- and in particular, no dependence on the index $i \in \N$! In this respect, everything depends on an estimate made no earlier than on page 27 of this paper. 

\begin{lemma}\label{Intervals} Fix $x,y \in \Omega$, $x \neq y$, write $r = d(x,y)$, and let $I$ be a compact subinterval of $J$. The set
\begin{displaymath} \{\lambda \in \interior I : |\Phi_{\lambda}(x,y)| < \delta_{I,\tau}r^{\tau}\} \end{displaymath}
can be written as the countable union of disjoint (maximal) open intervals $I_{1},I_{2},\ldots \subset I$. 
\begin{itemize}
\item[(i)] The intervals $I_{j}$ satisfy $\calL^{1}(I_{j}) \leq 2$. Furthermore, if $I_{j}$ and $I$ have no common boundary, then $r^{2\tau} \lesssim_{I,\tau} \calL^{1}(I_{j})$. Thus, in fact, there are only finitely many intervals $I_{j}$.
\item[(ii)] There exist points $\lambda_{j} \in \bar{I}_{j}$, which satisfy: if $\lambda \in \bar{I}_{j}$, then $|\Phi_{\lambda}(x,y)| \geq |\Phi_{\lambda_{j}}(x,y)|$ and $|\Phi_{\lambda}(x,y)| \geq \delta_{I,\tau}r^{\tau}|\lambda - \lambda_{j}|$. Furthermore, there exists a constant $\varepsilon > 0$, depending only on $I$ and $\tau$, with the following properties: \emph{(a)} if $\lambda \in I_{j}$ and $|\Phi_{\lambda}(x,y)| \leq \delta_{I,\tau}r^{\tau}/2$, then $(\lambda - \varepsilon r^{2\tau},\lambda + \varepsilon r^{2\tau}) \cap I \subset I_{j}$, and \emph{(b)} if $|\Phi_{\lambda_{j}}(x,y)| \geq \delta_{I,\tau}r^{\tau}/2$, then $|\Phi_{\lambda}(x,y)| \geq \delta_{I,\tau}r^{\tau}/2$ for all $\lambda \in (\lambda_{j} - \varepsilon r^{2\tau}, \lambda_{j} + \varepsilon r^{2\tau}) \cap I$.
\end{itemize}
\end{lemma}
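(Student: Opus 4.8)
The plan is to analyze the set $S := \{\lambda \in \interior I : |\Phi_{\lambda}(x,y)| < \delta_{I,\tau}r^{\tau}\}$ using the transversality hypothesis \eqref{transversality} as the workhorse. Since $\Phi_{\lambda}(x,y)$ is a smooth function of $\lambda$ (by (i) of Definition \ref{projections}, as it is a ratio of a smooth function and the fixed constant $d(x,y)$), the set $S$ is open, hence a countable disjoint union of maximal open intervals $I_1, I_2, \ldots \subset I$. So the decomposition statement is immediate and the content is entirely in the quantitative claims (i) and (ii).

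For part (i), I would argue as follows. On the closure $\bar I_j$ we have $|\Phi_\lambda(x,y)| \le \delta_{I,\tau}r^\tau$, so transversality \eqref{transversality} forces $|\partial_\lambda \Phi_\lambda(x,y)| \ge \delta_{I,\tau}r^\tau$ throughout $\bar I_j$. Since $\partial_\lambda\Phi_\lambda(x,y)$ is continuous, it has constant sign on $\bar I_j$, so $\Phi_\lambda(x,y)$ is strictly monotone on $\bar I_j$; combined with $|\partial_\lambda\Phi_\lambda| \ge \delta_{I,\tau}r^\tau$ this gives the oscillation bound, namely the total variation of $\Phi_\lambda(x,y)$ over $I_j$ is at least $\delta_{I,\tau}r^\tau \cdot \calL^1(I_j)$. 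But $|\Phi_\lambda(x,y)|$ is bounded by $2\delta_{I,\tau}r^\tau$ on $\bar I_j$ (actually $\le \delta_{I,\tau}r^\tau$ on $\bar I_j$), and crucially, if $I_j$ has an endpoint in the interior of $I$, then at that endpoint $|\Phi_\lambda(x,y)| = \delta_{I,\tau}r^\tau$ by maximality, while on the other side the function must drop down — here one uses that $|\Phi_\lambda(x,y)|$ is bounded above on all of $I$ by something like $\delta_{I,\tau}$ (for $\tau$ small, or by a $\tau$-independent constant using (i) of the definition with $l=0$), giving $\calL^1(I_j) \lesssim_{I,\tau} \delta_{I,\tau}^{-1}$, which after adjusting constants is the $\calL^1(I_j) \le 2$ bound. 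For the lower bound $r^{2\tau} \lesssim_{I,\tau} \calL^1(I_j)$ when $I_j$ shares no boundary with $I$: at both endpoints $|\Phi_\lambda(x,y)|$ equals $\delta_{I,\tau}r^\tau$, so by the mean value theorem there is an interior point of $I_j$ where $\partial_\lambda\Phi_\lambda(x,y) = 0$; but then transversality is violated there unless $|\Phi_\lambda(x,y)| > \delta_{I,\tau}r^\tau$ at that point — contradiction, \emph{unless} the interval is long enough. More precisely, one uses the second-derivative bound from \eqref{regularity}: $|\partial_\lambda^2\Phi_\lambda(x,y)| \lesssim_{I,\tau} r^{-2\tau}$, so starting from a point where $\partial_\lambda\Phi_\lambda = 0$, to build up $|\partial_\lambda\Phi_\lambda| \ge \delta_{I,\tau}r^\tau$ (which is needed near the endpoints where $|\Phi_\lambda| \le \delta_{I,\tau}r^\tau$) requires traversing a $\lambda$-distance of at least $\delta_{I,\tau}r^\tau / (Cr^{-2\tau}) \gtrsim_{I,\tau} r^{3\tau}$... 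I'd need to be careful to land exactly on $r^{2\tau}$; the cleaner route is: on $\bar I_j$ transversality gives $|\partial_\lambda\Phi_\lambda| \ge \delta_{I,\tau}r^\tau$, so $\partial_\lambda\Phi_\lambda$ has constant sign on $\bar I_j$, so $\Phi_\lambda(x,y)$ goes monotonically from $\pm\delta_{I,\tau}r^\tau$ at one endpoint to $\mp\delta_{I,\tau}r^\tau$ at the other (opposite signs, since it dips below $\delta_{I,\tau}r^\tau$ in absolute value in between only if it changes sign — wait, it could stay the same sign), so the variation is at least $\delta_{I,\tau}r^\tau$ if signs are opposite; if the same sign, then $\Phi$ dips to near $0$ and comes back, forcing $\partial_\lambda\Phi_\lambda$ to change sign, contradiction with constant sign. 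Hence signs at the endpoints are opposite, variation $\ge 2\delta_{I,\tau}r^\tau$, and $|\partial_\lambda\Phi_\lambda| \lesssim_{I,\tau} r^{-\tau}$ from \eqref{regularity} gives $\calL^1(I_j) \ge 2\delta_{I,\tau}r^\tau / (Cr^{-\tau}) \gtrsim_{I,\tau} r^{2\tau}$. Good — that's the right exponent. Finiteness of the collection then follows since the $I_j$'s are disjoint, of length $\gtrsim_{I,\tau} r^{2\tau}$ (except possibly the two containing endpoints of $I$), and contained in the bounded interval $I$.

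For part (ii), I would take $\lambda_j \in \bar I_j$ to be a point minimizing $|\Phi_\lambda(x,y)|$ on $\bar I_j$ (it exists by compactness and continuity). The first inequality $|\Phi_\lambda(x,y)| \ge |\Phi_{\lambda_j}(x,y)|$ is then by definition. For the second, $|\Phi_\lambda(x,y)| \ge \delta_{I,\tau}r^\tau|\lambda - \lambda_j|$: on the subinterval of $\bar I_j$ between $\lambda_j$ and $\lambda$, transversality applies (since $|\Phi| \le \delta_{I,\tau}r^\tau$ there), so $|\partial_\mu\Phi_\mu(x,y)| \ge \delta_{I,\tau}r^\tau$; moreover $\partial_\mu\Phi_\mu$ has constant sign on $\bar I_j$ and $\lambda_j$ is where $|\Phi|$ is minimized, so integrating the derivative from $\lambda_j$ to $\lambda$ gives $|\Phi_\lambda(x,y) - \Phi_{\lambda_j}(x,y)| \ge \delta_{I,\tau}r^\tau|\lambda - \lambda_j|$, and since $\Phi_{\lambda_j}$ and the increment have the same sign (again using that $\lambda_j$ is a minimizer of $|\Phi|$ and the constant sign of the derivative), $|\Phi_\lambda(x,y)| \ge \delta_{I,\tau}r^\tau|\lambda-\lambda_j|$. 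For the constant $\varepsilon > 0$ and properties (a), (b): these are "stability under small perturbation" statements proved via the second-derivative bound $|\partial_\lambda^2\Phi_\lambda(x,y)| \lesssim_{I,\tau} r^{-2\tau}$ from \eqref{regularity}. For (a): if $\lambda \in I_j$ with $|\Phi_\lambda(x,y)| \le \delta_{I,\tau}r^\tau/2$, then for $|\mu - \lambda| < \varepsilon r^{2\tau}$ we estimate, using $|\partial_\mu\Phi| \lesssim_{I,\tau} r^{-\tau}$ — hmm, actually I want to show $|\Phi_\mu(x,y)| < \delta_{I,\tau}r^\tau$ to conclude $\mu \in S$, hence $\mu \in I_j$ since $\lambda \in I_j$ and $I_j$ is a connected component; the bound $|\Phi_\mu - \Phi_\lambda| \le \|\partial\Phi\|_\infty|\mu-\lambda| \lesssim_{I,\tau} r^{-\tau}\cdot\varepsilon r^{2\tau} = \varepsilon r^\tau \lesssim_{I,\tau}$, which is $< \delta_{I,\tau}r^\tau/2$ if $\varepsilon$ is small enough relative to $\delta_{I,\tau}$ — wait, $r^{-\tau}\cdot r^{2\tau} = r^\tau$, so $|\Phi_\mu - \Phi_\lambda| \lesssim_{I,\tau}\varepsilon r^\tau$, hence $< \delta_{I,\tau}r^\tau/2$ for $\varepsilon$ small, giving $|\Phi_\mu| < \delta_{I,\tau}r^\tau$. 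For (b): same Lipschitz-type estimate, $|\Phi_\mu(x,y)| \ge |\Phi_{\lambda_j}(x,y)| - C_{I,\tau}\varepsilon r^\tau \ge \delta_{I,\tau}r^\tau/2 - C_{I,\tau}\varepsilon r^\tau \ge \delta_{I,\tau}r^\tau/4$ (adjusting constants), or one absorbs into the statement's $\delta_{I,\tau}r^\tau/2$ by choosing $\varepsilon$ appropriately from the start. The main obstacle, I expect, is bookkeeping the constants so that the exponents of $r$ come out exactly as stated ($r^{2\tau}$ for interval lengths, $r^\tau$ for the slope and for the spacing in (ii)) — this requires using the first-derivative regularity bound $|\partial_\lambda\Phi_\lambda(x,y)| \lesssim_{I,\tau} r^{-\tau}$ and the transversal lower bound $|\partial_\lambda\Phi_\lambda| \ge \delta_{I,\tau}r^\tau$ in tandem, plus the second-derivative bound $\lesssim_{I,\tau} r^{-2\tau}$, and keeping track that all implied constants depend only on $I$ and $\tau$ (and $d(\Omega) = \operatorname{diam}\Omega$), never on the specific points $x,y$ or the index $j$.
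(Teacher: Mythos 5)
Your overall strategy is the paper's: openness gives the decomposition into maximal components; on each $\bar{I}_{j}$ the transversality bound $|\partial_{\lambda}\Phi_{\lambda}| \geq \delta_{I,\tau}r^{\tau}$ forces strict monotonicity, and dividing the oscillation bound $|\Phi_{b} - \Phi_{a}| \leq 2\delta_{I,\tau}r^{\tau}$ by this slope gives $\calL^{1}(I_{j}) \leq 2$ exactly (your digression about bounding $|\Phi_{\lambda}|$ on all of $I$ by $\delta_{I,\tau}$ and "adjusting constants" is unnecessary -- the direct division already yields the constant $2$); for interior components the endpoint values are $\pm\delta_{I,\tau}r^{\tau}$ with opposite signs, and the regularity bound $|\partial_{\lambda}\Phi_{\lambda}| \lesssim_{I,\tau} r^{-\tau}$ gives $\calL^{1}(I_{j}) \gtrsim_{I,\tau} r^{2\tau}$. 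Your self-correction away from the second-derivative route (which lands on $r^{3\tau}$) to the first-derivative route is exactly what the paper does, and your choices of $\lambda_{j}$ as the minimizer of $|\Phi_{\lambda}|$ on $\bar{I}_{j}$ and of $\varepsilon \asymp \delta_{I,\tau}/C_{I,1,\tau}$ for (a) match the paper.

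The one genuine gap is (ii)(b). Your single Lipschitz estimate from $\lambda_{j}$ gives only $|\Phi_{\lambda}| \geq \delta_{I,\tau}r^{\tau}/2 - C_{I,1,\tau}\varepsilon r^{\tau}$, which is strictly below the stated threshold for \emph{every} $\varepsilon > 0$: no choice of $\varepsilon$ "absorbs" the loss, and weakening the conclusion to $\delta_{I,\tau}r^{\tau}/4$ would break the later application, where the cutoff $\varphi$ in the proof of Lemma \ref{PSLemma} is supported precisely in $\{|\Phi_{\lambda}| \leq \delta_{I,\tau}r^{\tau}/2\}$. The paper recovers the exact constant by a three-case analysis: if $\lambda \in I_{j}$, minimality of $|\Phi_{\lambda_{j}}|$ gives $|\Phi_{\lambda}| \geq \delta_{I,\tau}r^{\tau}/2$ with no loss at all; if $\lambda$ lies in no component, then $|\Phi_{\lambda}| \geq \delta_{I,\tau}r^{\tau}$ outright; and if $\lambda \in I_{i}$ with $i \neq j$, the Lipschitz estimate is run not from $\lambda_{j}$ but from the endpoint $t$ of $I_{i}$ lying between $\lambda_{j}$ and $\lambda$, where $|\Phi_{t}| = \delta_{I,\tau}r^{\tau}$ exactly, so that losing $C_{I,1,\tau}\varepsilon r^{\tau} = \delta_{I,\tau}r^{\tau}/2$ still leaves $\delta_{I,\tau}r^{\tau}/2$. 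You need that case split (or an equivalent device) to obtain the constant in the statement.
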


\begin{proof} Let $J$ be one of the intervals $I_{j}$, see Figure 2. According to the transversality condition (\ref{transversality}), we have
\begin{displaymath} \partial_{\lambda}\Phi_{\lambda}(x,y) \geq \delta_{I,\tau}r^{\tau} \quad \text{or} \quad \partial_{\lambda}\Phi_{\lambda}(x,y) \leq -\delta_{I,\tau}r^{\tau} \end{displaymath}
for all $\lambda \in J$, which means that the mapping $\lambda \mapsto \Phi_{\lambda}(x,y)$ is strictly monotonic on $J$. The first inequality in (i) follows from (\ref{transversality}) via the  mean value theorem: if $[a,b] \subset J$ and $\xi \in (a,b)$ is the point specified by the mean value theorem, we have
\begin{displaymath} \delta_{I,\tau}r^{\tau}(b - a) \leq |\partial_{\lambda}\Phi_{\xi}(x,y)|(b - a) = |\Phi_{b}(x,y) - \Phi_{a}(x,y)| \leq 2\delta_{I,\tau}r^{\tau}. \end{displaymath}
For the second inequality in (i) we apply the regularity condition (\ref{regularity}): write $J = (a,b)$. Since $J$ and $I$ have no common boundary, we have $\{\Phi_{a}(x,y),\Phi_{b}(x,y)\} = \{-\delta_{I,\tau}r^{\tau},\delta_{I,\tau}r^{\tau}\}$. Hence, by (\ref{regularity}) and the mean value theorem,
\begin{displaymath} C_{I,1,\tau}r^{-\tau}\calL^{1}(J) \geq |\partial_{\lambda}\Phi_{\xi}(x,y)|(b - a) = |\Phi_{b}(x,y) - \Phi_{a}(x,y)| = 2\delta_{I,\tau}r^{\tau}. \end{displaymath}

\begin{center}
\begin{figure}[ht!]
\includegraphics[scale = 0.8]{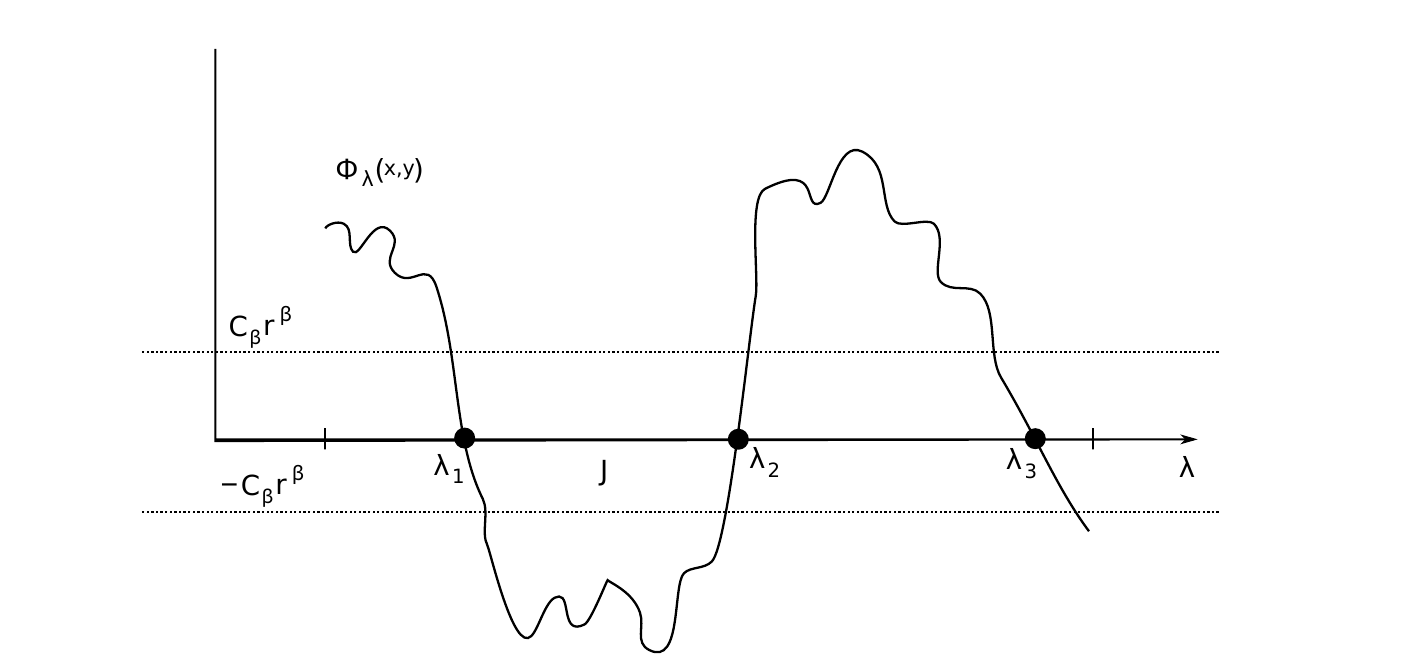}
\caption{The interval $J$ and some of the points $\lambda_{i}$}
\end{figure}
\end{center}
To prove (ii), let $\lambda_{j} \in \bar{I}_{j}$ be the unique point in $\bar{I}_{j}$ where the mapping $\lambda \mapsto |\Phi_{\lambda}(x,y)|$ attains its minimum on $I_{j}$. Such a point exists by continuity and monotonicity: note that on all but possibly two of the intervals $I_{j}$ (the left- and rightmost ones) $\lambda_{j}$ is the unique zero of the mapping $\lambda \mapsto \Phi_{\lambda}(x,y)$ on $I_{j}$. Now if $\lambda \in I_{j}$ is any point, we see that $\Phi_{\lambda}(x,y)$ has the same sign and absolute value at least as great as $\Phi_{\lambda_{j}}(x,y)$. This gives
\begin{displaymath} |\Phi_{\lambda}(x,y)| \geq |\Phi_{\lambda}(x,y) - \Phi_{\lambda_{j}}(x,y)| \geq \delta_{I,\tau}r^{\tau}|\lambda - \lambda_{j}| \end{displaymath}
by \eqref{transversality} and the mean value theorem. All that is left now are (a) and (b) of (iii): set $\varepsilon := \delta_{I,\tau}C_{I,1,\tau}^{-1}/2 > 0$. Assume first that $\lambda \in I_{j}$, $|\Phi_{\lambda}(x,y)| \leq \delta_{I,\tau}r^{\tau}/2$ and $t \in (\lambda - \varepsilon r^{2\tau},\lambda + \varepsilon r^{2\tau}) \cap I$. Then, by the regularity assumption (\ref{regularity}) and $|t - \lambda| < \varepsilon r^{2\tau}$ we get
\begin{displaymath} |\Phi_{t}(x,y)| \leq |\Phi_{t}(x,y) - \Phi_{\lambda}(x,y)| + \delta_{I,\tau}r^{\tau}/2 \leq C_{I,1,\tau}r^{-\tau}|t - \lambda| + \delta_{I,\tau}r^{\tau}/2 < \delta_{I,\tau}r^{\tau}. \end{displaymath}
Since the same also holds for all $t'$ between $\lambda$ and $t$, we see that $t \in I_{j}$. Finally, suppose that $|\Phi_{\lambda_{j}}(x,y)| \geq \delta_{I,\tau}r^{\tau}/2$ and fix $\lambda \in (\lambda_{j} - \varepsilon r^{2\tau}, \lambda_{j} + \varepsilon r^{2\tau}) \cap I$. There are three cases: first, if $\lambda \in I_{j}$, then, by choice of $\lambda_{j}$, we clearly have $|\Phi_{\lambda}(x,y)| \geq \delta_{I,\tau}r^{\tau}/2$. Second, if $\lambda$ belongs to none of the intervals $I_{j}$, we even have the stronger conclusion $|\Phi_{\lambda}(x,y)| \geq \delta_{I,\tau}r^{\tau}$. Finally, if $\lambda \in I_{i}$ for some $i \neq j$, let $t \in I$ be the end point of $I_{i}$ between $\lambda_{j}$ and $\lambda$. Then $|\Phi_{t}(x,y)| = \delta_{I,\tau}r^{\tau}$ and $|t - \lambda| \leq \varepsilon r^{2\tau}$ so that by (\ref{regularity}), the mean value theorem and the choice of $\varepsilon$,
\begin{displaymath} |\Phi_{\lambda}(x,y)| \geq \delta_{I,\tau}r^{\tau} - |\Phi_{t}(x,y) - \Phi_{\lambda}(x,y)| \geq \delta_{I,\tau}r^{\tau} - C_{I,1,\tau}r^{-\tau}\varepsilon r^{2\tau} = \delta_{I,\tau}r^{\tau}/2. \end{displaymath}
This finishes the proof of (ii).
\end{proof}

Now we are prepared to prove Lemma \ref{PSLemma}. Recall that we should prove
\begin{displaymath} \left| \int_{\R} \gamma(\lambda)\hat{\eta}(2^{j - i}r\partial_{\lambda}\Phi_{\lambda}(x,y))\bar{\hat{\eta}}(2^{j}r\Phi_{\lambda}(x,y)) \, d\lambda \right| \lesssim_{\gamma,q} (1 + 2^{j}r^{1 + A\tau})^{-q}, \end{displaymath} 
where $x,y \in \Omega$, $r = d(x,y)$, $q \in \N$, $j \in \Z$, $\gamma$ is any compactly supported smooth function on $J,$ and $A \geq 1$ is an absolute constant. We may and will assume that $q \geq 2$. Moreover, recall that $\eta$ was a fixed smooth function satisfying $\chi_{[a^{-1},a]} \leq \eta \leq \chi_{[a^{-2},2a]}$ for some large $a \geq 1$. Setting $\psi := \bar{\hat{\eta}}$ (in the appendix, we freely recycle all the Greek letters and other symbols that had a special meaning in the previous sections), this definition implies that $\psi$ is a rapidly decreasing function, that is, obeys the bounds $|\psi(t)| \lesssim_{N} (1 + |t|)^{-N}$ for any $N \in \N$, and also satisfies
\begin{equation}\label{form13} \hat{\psi}^{(l)}(0) = 0, \qquad l \in \N. \end{equation}
Fixing $x,y \in \Omega$, we will also temporarily write 
\begin{displaymath} \Gamma(\lambda) := \gamma(\lambda)\hat{\eta}(2^{j - i}r\partial_{\lambda}\Phi_{\lambda}), \end{displaymath}
where $\Phi_{\lambda} := \Phi_{\lambda}(x,y)$. For quite some while, it suffices to know that $\Gamma$ is a smooth function satisfying $\|\Gamma\|_{L^{\infty}(\R)} \lesssim_{\gamma} 1$. The inequality we are supposed to prove now takes the form
\begin{equation}\label{Projection ineq} \left| \int_{\R} \Gamma(\lambda)\psi(2^{j}r\Phi_{\lambda}) \, d\lambda \right| \lesssim_{\gamma,q} (1 + 2^{j}r^{1 + A\tau})^{-q}. \end{equation} 
We claim that $A = 14$ will do the trick. First of all, we may assume that 
\begin{equation}\label{reduction} 2^{j}r^{1 + m\tau} > 1, \qquad 0 \leq m \leq 14, \end{equation} 
Indeed, if this were not the case for some $0 \leq m \leq 14$, then
\begin{displaymath} \left| \int_{\R} \Gamma(\lambda)\psi(2^{j}r\Phi_{\lambda}) \, d\lambda \right| \lesssim \|\gamma\|_{L^{1}(\R)} \lesssim_{\gamma,q} 2^{-q} \leq (1 + 2^{j}r^{1 + m\tau})^{-q} \lesssim_{q,d(\Omega)} (1 + 2^{j}r^{1 + 14\tau})^{-q}, \end{displaymath}
and we would be done. Next choose an auxiliary function $\varphi \in C^{\infty}(\R)$ with $\chi_{[-1/4,1/4]} \leq \varphi \leq \chi_{[-1/2,1/2]}$. Then split the integration in (\ref{Projection ineq}) into two parts:
\begin{align} \int_{\R} \Gamma(\lambda)\psi(& 2^{j}r\Phi_{\lambda}) \, d\lambda \notag\\
&\label{Piece I} = \int_{\R} \Gamma(\lambda)\psi(2^{j}r\Phi_{\lambda})\varphi(\delta_{\tau}^{-1}r^{-\tau}\Phi_{\lambda}) \, d\lambda\\
&\label{Piece II} + \int_{\R} \Gamma(\lambda)\psi(2^{j}r\Phi_{\lambda})[1 - \varphi(\delta_{\tau}^{-1}r^{-\tau}\Phi_{\lambda})] \, d\lambda. \end{align}
Here $\delta_{\tau} := \delta_{I,\tau} > 0$ is the constant from Definition \ref{projections}, and $I \subset J$ is some compact interval containing the support of $\Gamma$. The integral of line (\ref{Piece II}) is easy to bound, since the integrand vanishes whenever $|\Phi_{\lambda}| \leq \delta_{\tau}r^{\tau}/4$, and if $|\Phi_{\lambda}| \geq \delta_{\tau}r^{\tau}/4$, we have the estimate
\begin{displaymath} |\psi(2^{j}r\Phi_{\lambda})| \lesssim_{q} (1 + \delta_{\tau}2^{j - 2}r^{\tau + 1})^{-q} \lesssim_{\tau,q} (1 + 2^{j}r^{\tau + 1})^{-q}, \end{displaymath}
whence
\begin{equation}\label{Rapid decay} \left| \int_{\R} \Gamma(\lambda)\psi(2^{j}r\Phi_{\lambda})[1 - \varphi(\delta_{\tau}^{-1}r^{-\tau}\Phi_{\lambda})] d\lambda \right| \lesssim_{q,\gamma,\tau} (1 + 2^{j}r^{\tau + 1})^{-q}. \end{equation}

Moving on to line \eqref{Piece I}, let the intervals $I_{1},\ldots,I_{N}$, the points $\lambda_{i} \in I_{i}$ and the constant $\varepsilon > 0$ be as provided by Lemma \ref{Intervals} (related to the interval $I$ specified above). Choose another auxiliary function $\chi \in C^{\infty}(\R)$ with $\chi_{[-\varepsilon/2,\varepsilon/2]} \leq \chi \leq \chi_{(-\varepsilon,\varepsilon)}$. Then split the integration on line (\ref{Piece I}) into $N + 1$ parts:
\begin{align} \int_{\R} \Gamma(\lambda)& \psi(2^{j}r\Phi_{\lambda})\varphi(\delta_{\tau}^{-1}r^{-\tau}\Phi_{\lambda}) \, d\lambda \notag\\
&\label{Piece III} = \sum_{i = 1}^{N} \int_{\R} \Gamma(\lambda)\chi(r^{-2\tau}(\lambda - \lambda_{i}))\psi(2^{j}r\Phi_{\lambda})\varphi(\delta_{\tau}^{-1}r^{-\tau}\Phi_{\lambda}) \, d\lambda \\
&\label{Piece IV} + \int_{\R} \Gamma(\lambda) \left[ 1 - \sum_{i = 1}^{N} \chi(r^{-2\tau}(\lambda - \lambda_{i})) \right]\psi(2^{j}r\Phi_{\lambda})\varphi(\delta_{\tau}^{-1}r^{-\tau}\Phi_{\lambda}) \, d\lambda. \end{align}
With the aid of part (ii) of Lemma \ref{Intervals}, the integral on line (\ref{Piece IV}) is easy to handle. If the integrand is non-vanishing at some point $\lambda \in I$, we necessarily have $\varphi(\delta_{\tau}^{-1}r^{-\tau}\Phi_{\lambda}) \neq 0$, which means that $|\Phi_{\lambda}| \leq \delta_{\tau}r^{\tau}/2$: in particular $\lambda \in I_{i}$ for some $1 \leq i \leq N$. Now (a) of Lemma \ref{Intervals}(ii) tells us that $(\lambda - \varepsilon r^{2\tau}, \lambda + \varepsilon r^{2\tau}) \cap I \subset I_{i}$, whence $r^{-2\tau}|\lambda - \lambda_{j}| \geq \varepsilon$ and $\chi(r^{-2\tau}(\lambda - \lambda_{j})) = 0$ for $j \neq i$. But, since the integrand is non-vanishing at $\lambda \in I$, this enables us to conclude that $\chi(r^{-2\tau}(\lambda - \lambda_{i})) < 1$: in particular, $|\lambda - \lambda_{i}| \geq \varepsilon r^{2\tau}/2$. Then Lemma \ref{Intervals}(ii) shows that $|\Phi_{\lambda}| \geq \delta_{\tau}r^{\tau}|\lambda - \lambda_{i}| \geq \varepsilon \delta_{\tau}r^{3\tau}/2$, and using the rapid decay of $\psi$ as on line (\ref{Rapid decay}) one obtains
\begin{displaymath} \left|\int_{\R} \Gamma(\lambda) \left[ 1 - \sum_{j = 1}^{N} \chi(r^{-2\tau}(\lambda - \lambda_{j})) \right]\psi(2^{j}r\Phi_{\lambda})\varphi(\delta_{\tau}^{-1}r^{-\tau}\Phi_{\lambda}) \, d\lambda \right| \lesssim_{q,\gamma,\tau} (1 + 2^{j}r^{1 + 3\tau})^{-q}. \end{displaymath}

Now we turn our attention to the $N$ integrals on line (\ref{Piece III}). If $|\Phi_{\lambda_{i}}| \geq \delta_{\tau}r^{\tau}/2$ for some $1 \leq i \leq N$, part (b) of Lemma \ref{Intervals}(ii) says that $|\Phi_{\lambda}| \geq \delta_{\tau}r^{\tau}/2$ for all $\lambda \in (\lambda_{i} - \varepsilon r^{2\tau},\lambda_{i} + \varepsilon r^{2\tau}) \cap I$, that is, for all such $\lambda \in I$ that $\chi(r^{-2\tau}(\lambda - \lambda_{i})) \neq 0$. But for such $\lambda \in I$ it holds that $\varphi(\delta_{\tau}^{-1}r^{-\tau}\Phi_{\lambda}) = 0$, whence
\begin{displaymath} \int_{\R} \Gamma(\lambda)\chi(r^{-2\tau}(\lambda - \lambda_{i}))\psi(2^{j}r\Phi_{\lambda})\varphi(\delta_{\tau}^{-1}r^{-\tau}\Phi_{\lambda}) \, d\lambda = 0. \end{displaymath}
So we may assume that $|\Phi_{\lambda_{i}}| < \delta_{\tau}r^{\tau}/2$. In this case part (a) of Lemma \ref{Intervals}(ii) tells us that the support of $\lambda \mapsto \Gamma(\lambda)\chi(r^{-2\tau}(\lambda - \lambda_{i}))$ is contained in $I_{i}$. The restriction of $\lambda \mapsto \Phi_{\lambda}$ to this interval $I_{i}$ is strictly monotonic, so the inverse $g := \Phi_{(\cdot)}^{-1} \colon \{\Phi_{\lambda} : \lambda \in I_{i}\} \to I_{i}$ exists. We perform the change of variables $\lambda \mapsto g(u)$:
\begin{align} \int_{\R} & \Gamma(\lambda)\chi(r^{-2\tau}(\lambda - \lambda_{i}))\psi(2^{j}r\Phi_{\lambda})\varphi(\delta_{\tau}^{-1}r^{-\tau}\Phi_{\lambda}) \, d\lambda \notag\\
&\label{Piece IIIb} = \int_{\R} \Gamma(g(u))\chi(r^{-2\tau}(g(u) - \lambda_{i}))\psi(2^{j}r u)\varphi(\delta_{\tau}^{-1}r^{-\tau} u)g'(u) \, du. \end{align}
Write $F(u) := \Gamma(g(u))\chi(r^{-2\tau}(g(u) - \lambda_{i}))\varphi(\delta_{\tau}^{-1}r^{-\tau} u)g'(u)$ for $u \in \{\Phi_{\lambda} : \lambda \in I_{i}\}$. The support of $\lambda \mapsto \Gamma(\lambda)\chi(r^{-2\tau}(\lambda - \lambda_{i}))$ is compactly contained in $I_{i}$, which means that the support of $u \mapsto \Gamma(g(u))\chi(r^{2\tau}(g(u) - \lambda_{i}))$ is compactly contained in the open interval $\{\Phi_{\lambda} : \lambda \in I_{i}\}$. Thus $F$ may be defined smoothly on the real line by setting $F(u) := 0$ for $u \notin \{\Phi_{\lambda} : \lambda \in I_{i}\}$. We will need the following lemmas:

\begin{lemma}\label{Inverse derivatives} Let $h \in C^{k}(a,b)$ with $h'(x) \neq 0$ for $x \in (a,b)$. Suppose the inverse $h^{-1} \colon h(a,b) \to (a,b)$ exists. Then, for $x \in (a,b)$ and $1 \leq l \leq k$,
\begin{displaymath} (h^{-1})^{(l)}(h(x)) = \sum_{m = 0}^{l - 1} \frac{(-1)^{m}}{m!}(h'(x))^{-l - m} \sum_{\mathbf{b} \in \N^{m}} \frac{(l - 1 + m)!}{b_{1}! \cdots b_{m}!}h^{(b_{1})}(x) \cdots h^{(b_{m})}(x), \end{displaymath}
where the inner summation runs over those $\mathbf{b} = (b_{1},\ldots,b_{m}) \in \N^{m}$ with $b_{1} + \ldots + b_{m} = (l - 1) + m$ and $b_{i} \geq 2$ for all $1 \leq i \leq k$.
\end{lemma}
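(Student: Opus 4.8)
The plan is to prove the formula by induction on $l$, using throughout the abbreviation
\[
T_{l,m}(x) := \sum_{\substack{\mathbf{b}\in\N^m,\ b_i\geq 2\\ b_1+\cdots+b_m = l-1+m}} \frac{(l-1+m)!}{b_1!\cdots b_m!}\,h^{(b_1)}(x)\cdots h^{(b_m)}(x),
\]
so that the asserted identity reads $(h^{-1})^{(l)}(h(x)) = \sum_{m\geq 0}\frac{(-1)^m}{m!}(h'(x))^{-l-m}T_{l,m}(x) =: S_l(x)$; note $T_{l,m}\equiv0$ for $m\geq l$ (the constraints $b_i\ge2$ and $\sum b_i=l-1+m$ are then incompatible), so the sum over $m$ is finite. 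The base case $l=1$ is the elementary rule $(h^{-1})'(h(x))=1/h'(x)$: only the term $m=0$ is present, and with the usual conventions ($0!=1$, empty tuple, empty product $=1$) it equals $(h'(x))^{-1}$.

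For the inductive step, suppose $1\le l<k$ and $(h^{-1})^{(l)}(h(x))=S_l(x)$ on $(a,b)$. Since $h'\neq0$, the inverse function theorem gives $h^{-1}\in C^{k}$, so both sides are $C^1$ and may be differentiated: the chain rule turns the left side into $(h^{-1})^{(l+1)}(h(x))\,h'(x)$, whence $(h^{-1})^{(l+1)}(h(x))=(h'(x))^{-1}\partial_x S_l(x)$. I would then expand $\partial_x S_l$ by the product rule: each summand splits into a piece where $(h')^{-l-m}$ is differentiated (which lowers its exponent and brings down a factor $-(l+m)h''$, effectively adjoining a new ``$b$-coordinate equal to $2$'') and a piece where the product $T_{l,m}$ is differentiated. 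Regrouping the results according to the power of $h'$ — the first pieces at index $m$ line up with index $m+1$ — reduces the target identity $(h')^{-1}\partial_xS_l=S_{l+1}$ to the single combinatorial relation
\[
T_{l+1,m}(x)=\partial_x T_{l,m}(x)+m(l+m-1)\,h''(x)\,T_{l,m-1}(x),\qquad 0\le m\le l.
\]

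To verify this relation I would compare, on both sides, the coefficient of a fixed monomial $h^{(c_1)}(x)\cdots h^{(c_m)}(x)$ with $c_i\ge2$ and $c_1+\cdots+c_m=l+m$; write $\nu$ for the number of indices with $c_i=2$ and $N$ for the number of distinct orderings of the multiset $\{c_1,\dots,c_m\}$. The left side evidently contributes $N(l+m)!/(c_1!\cdots c_m!)$. On the right, differentiating $T_{l,m}$ increments one coordinate of some admissible $\mathbf b$ (which was $\ge2$, so the incremented value is $\ge3$), and summing the resulting weights gives $\frac{(l+m-1)!}{c_1!\cdots c_m!}\sum_{\mathbf a}\sum_{j:\,a_j\ge3}a_j$, with $\mathbf a$ ranging over orderings of $\{c_i\}$; using $\sum_{j:a_j\ge3}a_j=(l+m)-2\nu$ this equals $\frac{N(l+m-1)!}{c_1!\cdots c_m!}(l+m-2\nu)$. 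The term $m(l+m-1)h''T_{l,m-1}$ is nonzero only when $\nu\ge1$, and a short count (one factor $2!$ is gained in a denominator, and the number of $\N^{m-1}$-orderings of $\{c_i\}\setminus\{2\}$ is $\tfrac{\nu}{m}N$) shows it contributes $\frac{2\nu N(l+m-1)!}{c_1!\cdots c_m!}$. Since $(l+m-2\nu)+2\nu=l+m$, the two right-hand contributions add to $\frac{N(l+m)!}{c_1!\cdots c_m!}$, matching the left side.

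The routine parts are the product-rule bookkeeping and the permutation counts; the main obstacle — modest as it is — is keeping the ordered summation over $\mathbf b\in\N^m$ consistent with the symmetry of the monomials $h^{(b_1)}\cdots h^{(b_m)}$, and correctly identifying which terms of $(h')^{-1}\partial_xS_l$ reassemble into $T_{l+1,m}$. (An alternative that avoids the induction: for $l\le k$ both sides depend only on the $l$-jet of $h$ at $x$, which one sees by differentiating $h^{-1}\circ h=\mathrm{id}$; one may therefore replace $h$ by its degree-$l$ Taylor polynomial $p$ at $x$ and apply the Lagrange inversion formula $(h^{-1})^{(l)}(h(x))/l!=\tfrac1l[t^{l-1}](t/p(t))^l$, expanding $(t/p(t))^l$ as a geometric series and reading off the coefficient — this reproduces the stated formula verbatim after the substitutions $a_j=h^{(j)}(x)/j!$ and $b_i=k_i+1$.)
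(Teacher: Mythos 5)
Your proof is correct, but it is worth noting that the paper does not actually prove this lemma: its ``proof'' is the single line ``See \cite{Jo1}'', deferring entirely to Johnson's article on higher derivatives of inverses. You instead supply a complete, self-contained argument, and it checks out. Differentiating the claimed identity for $(h^{-1})^{(l)}(h(x))$ and regrouping by powers of $h'$ does reduce the inductive step to exactly the relation $T_{l+1,m}=\partial_x T_{l,m}+m(l+m-1)h''T_{l,m-1}$ (the sign bookkeeping works: the $-(l+m)$ from differentiating $(h')^{-l-m}$ combines with the shift $m\mapsto m+1$ and the factor $(-1)^m/m!$ to give precisely $m(l+m-1)$ at index $m$), and your coefficient comparison on a fixed monomial is right: the derivative term contributes $N(l+m-1)!\,(l+m-2\nu)/(c_1!\cdots c_m!)$ because incrementing $b_j\mapsto b_j+1$ multiplies the multinomial weight by $a_j$ and is only reversible when $a_j\ge3$, while the $h''T_{l,m-1}$ term contributes $2\nu N(l+m-1)!/(c_1!\cdots c_m!)$ via the ordering count $N'=\tfrac{\nu}{m}N$; these sum to the required $N(l+m)!/(c_1!\cdots c_m!)$. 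The boundary cases also behave ($T_{l,m}\equiv0$ for $m\ge l$, so no stray terms survive at $m=l+1$). What your route buys is independence from the external reference; what the citation buys the author is brevity. Your parenthetical alternative via Lagrange inversion is also a legitimate second road to the same formula, though as written it is only a sketch.
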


\begin{proof} See \cite{Jo1}. \end{proof}

\begin{lemma}[Faà di Bruno formula] Let $f,h \in C^{l}(\R)$. Then
\begin{equation}\label{Faa di Bruno} (f \circ h)^{(l)}(\lambda) = \sum_{\mathbf{m} \in \N^{l}} b_{\mathbf{m}} f^{(m_{1} + \ldots + m_{l})}(h(\lambda)) \cdot [h'(\lambda)]^{m_{1}} \cdots [h^{(l)}(\lambda)]^{m_{l}}, \end{equation}
where $\mathbf{m} = (m_{1},\ldots,m_{l}) \in \N^{l}$, and $b_{\mathbf{m}} \neq 0$ if and only if $m_{1} + 2m_{2} + \ldots + lm_{l} = l$.
\end{lemma}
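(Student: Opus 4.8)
The plan is induction on $l$, tracking only which multi-indices $\mathbf{m}$ can occur rather than the exact values of the coefficients $b_{\mathbf{m}}$, since the statement asks only for the support condition $m_1 + 2m_2 + \cdots + lm_l = l$. The base case $l = 1$ is the chain rule $(f \circ h)'(\lambda) = f'(h(\lambda))h'(\lambda)$, which is the asserted formula with the single index $\mathbf{m} = (1)$. For the inductive step, assume \eqref{Faa di Bruno} at level $l \geq 1$; since the sum there is finite, it suffices to differentiate one generic summand
\begin{displaymath} f^{(m_1 + \cdots + m_l)}(h(\lambda)) \cdot [h'(\lambda)]^{m_1} \cdots [h^{(l)}(\lambda)]^{m_l}, \qquad \sum_{i = 1}^{l} i m_i = l, \end{displaymath}
and check that every term it produces again has the shape required at level $l + 1$. (Note $m_1 + \cdots + m_l \leq \sum_i i m_i = l$, so all $f$-derivatives appearing are of order $\leq l$ and hence legitimate for $f \in C^{l}$; the corresponding bound at the next level will follow from the computation below.)

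By the product and chain rules, differentiating this summand produces: (a) one term in which the order of the $f$-derivative increases by one and an extra factor $h'$ appears, i.e., the slot count $m_1$ is raised to $m_1 + 1$ and the rest unchanged; and (b) for every $i$ with $m_i \geq 1$, a term carrying a positive integer factor $m_i$ in which one occurrence of $h^{(i)}$ becomes $h^{(i+1)}$, i.e., one unit is moved from slot $i$ to slot $i+1$ while the $f$-derivative order is unchanged. In case (a) the new index $\mathbf{m}' \in \N^{l+1}$ satisfies $\sum_j j m_j' = 1 + \sum_i i m_i = l + 1$ and $|\mathbf{m}'| = |\mathbf{m}| + 1$, matching the new $f$-derivative order; in case (b), $\sum_j j m_j' = \sum_i i m_i - i + (i+1) = l + 1$ and $|\mathbf{m}'| = |\mathbf{m}|$, again matching. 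Collecting the produced terms by their multi-index and letting $b_{\mathbf{m}'}$ be the total coefficient of the corresponding monomial proves the formula at level $l + 1$, with $b_{\mathbf{m}'} \neq 0$ only when $\sum_j j m_j' = l + 1$.

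For the reverse implication, observe that in step (b) each new coefficient is a positive integer multiple of an earlier one, and in step (a) it is carried over unchanged; starting from $b_{(1)} = 1$, all coefficients therefore remain strictly positive, so no cancellation ever occurs. It then remains to check that every $\mathbf{m}'$ with $m_1' + 2m_2' + \cdots + (l+1)m_{l+1}' = l+1$ is reached by move (a) or (b) from some admissible level-$l$ index — via (a) from $(m_1' - 1, m_2', \ldots, m_l')$ if $m_1' \geq 1$, and otherwise via (b) from the index obtained by shifting one unit down from the smallest nonzero slot of $\mathbf{m}'$. I expect the only genuine work in the whole argument to be this combinatorial bookkeeping — verifying that the two index moves respect the running constraint $\sum_i i m_i = l$ (bumping it to $l+1$) and the identity relating $|\mathbf{m}|$ to the order of the $f$-derivative — and since the explicit values of $b_{\mathbf{m}}$ play no role in our application, even that is routine; a reader wanting the classical closed form for $b_{\mathbf{m}}$ may consult the standard references.
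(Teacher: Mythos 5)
Your argument is correct. Note, however, that the paper does not prove this lemma at all: it simply cites Johnson's survey \cite{Jo2}, where the classical closed form of the coefficients $b_{\mathbf{m}}$ (a multinomial-type expression) is derived. What you supply instead is a short self-contained induction that deliberately ignores the exact values of the $b_{\mathbf{m}}$ and tracks only the support condition $m_{1} + 2m_{2} + \ldots + lm_{l} = l$, together with positivity of the coefficients to rule out cancellation. This is exactly calibrated to how the lemma is used in the paper -- every application only invokes $|b_{\mathbf{m}}|$ as an $l$-dependent constant and the constraint $\sum_{i} i m_{i} = l$ to collapse the powers of $r^{-\tau}$ -- so your version trades the explicit combinatorial formula for a two-page-shorter, fully elementary proof. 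The bookkeeping in your inductive step is sound: move (a) raises both the $f$-derivative order and $|\mathbf{m}|$ by one while raising the weighted sum from $l$ to $l+1$; move (b) preserves both the $f$-derivative order and $|\mathbf{m}|$ while shifting one unit of weight up by one; and your surjectivity check (peeling a unit off slot $1$, or shifting one unit down from the smallest occupied slot) does reach every admissible index at level $l+1$ from an admissible one at level $l$. One small point worth making explicit if you write this up: when $m_{1}' \geq 1$ the constraint forces $m_{l+1}' = 0$, so the truncated index $(m_{1}'-1, m_{2}', \ldots, m_{l}')$ genuinely lies in $\N^{l}$; you use this silently.
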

\begin{proof} See \cite{Jo2}. \end{proof}

We apply the first lemma to $g$: if $u = \Phi_{\lambda}$ for some $\lambda \in I_{i}$, the derivatives $\partial_{\lambda}^{m}\Phi_{\lambda}$ satisfy the transversality and regularity conditions of Definition \ref{projections}. Hence
\begin{align} |g^{(l)}(u)| & = \left| \sum_{m = 0}^{l - 1} \frac{(-1)^{m}}{m!}(\partial_{\lambda}\Phi_{\lambda})^{-l - m} \sum_{\mathbf{b} \in \N^{m}} \frac{(l - 1 + m)!}{b_{1}! \cdots b_{m}!}\partial^{b_{1}}_{\lambda}\Phi_{\lambda} \cdots \partial^{b_{m}}_{\lambda}\Phi_{\lambda} \right| \notag\\
& \leq \sum_{m = 0}^{l - 1} \sum_{\mathbf{b} \in \N^{m}} \frac{(l - 1 + m)!}{b_{1}! \cdots b_{m}!} (\delta_{\tau}r^{\tau})^{-l - m}(C_{\tau,b_{1}}r^{-\tau b_{1}}) \cdots (C_{\tau,b_{m}}r^{-\tau b_{m}}) \notag\\
& \lesssim_{l,\tau} \sum_{m = 0}^{l - 1} \sum_{\mathbf{b} \in \N^{m}} \frac{(l - 1 + m)!}{b_{1}! \cdots b_{m}!} r^{-\tau(l + m + b_{1} + \ldots + b_{m})} \notag\\
&\label{derivatives} \lesssim_{l,\tau} \sum_{m = 0}^{l - 1} r^{-\tau(l + m + (l - 1) + m)} \lesssim_{l,\tau} r^{-\tau(4l - 3)}, \qquad l \geq 1.  \end{align}
On the last line we simply ignored the summation and employed the inequality
\begin{equation}\label{r ineq} r^{-s} \lesssim_{s,t,d(\Omega)} r^{-t}, \qquad s \leq t. \end{equation} 
with $s = m \leq l - 1 = t$ (this follows from $d(\Omega) < \infty$). Next we wish to estimate the derivatives of $F$, so we recall that
\begin{displaymath} F(u) = \gamma(g(u))\hat{\eta}(2^{j - i}r\partial_{\lambda}\Phi_{g(u)})\chi(r^{-2\tau}(g(u) - \lambda_{i}))\varphi(\delta_{\tau}^{-1}r^{-\tau} u)g'(u)\end{displaymath}
If $u \notin \{\Phi_{\lambda} : \lambda \in I_{i}\}$, we have $F^{(l)}(u) = 0$ for all $l \geq 0$. So, let $u = \Phi_{\lambda}$, $\lambda \in I_{i}$. By (\ref{derivatives}) and (\ref{r ineq}) we have $|g^{(l)}(u)| \lesssim_{l,\tau} r^{-4\tau l}$ for $l \geq 1$, so (\ref{Faa di Bruno}) gives
\begin{align} |(\gamma \circ g)^{(l)}(u)| & \lesssim_{l,\gamma} \sum_{\mathbf{m} \in \N^{l}} |b_{\mathbf{m}}||g'(u)|^{m_{1}} \cdots |g^{(l)}(u)|^{m_{l}} \notag \\
&\label{rho derivative} \lesssim_{l,\tau} \sum_{\mathbf{m} \in \N^{l}} |b_{\mathbf{m}}| (r^{-4 \tau})^{m_{1}} \cdots (r^{-4\tau l})^{m_{l}} \\
& = \sum_{\mathbf{m} \in \N^{l}} |b_{\mathbf{m}}| r^{-4\tau(m_{1} + 2m_{2} + \ldots + lm_{l})} \lesssim_{l} r^{-4\tau l}. \notag \end{align}
A similar computation also yields
\begin{equation}\label{chi and phi derivatives} |\partial^{(l)}_{u}\chi(r^{-2\tau}(g(u) - \lambda_{i}))| \lesssim_{l,\tau} r^{-6\tau l} \quad \text{and} \quad |\partial^{(l)}_{u}\varphi(\delta_{\tau}^{-1}r^{-\tau}u)| \lesssim_{l,\tau} r^{-\tau l}. \end{equation}  
\emph{The presence of the factor $\hat{\eta}(2^{j - i}r\partial_{\lambda}\Phi_{g(u)})$ in the definition of $F(u)$ is the only place where our proof of Lemma \ref{PSLemma} differs from the original proof of \cite[Lemma 4.6]{PS} -- and, indeed, we only need to check that the $l^{th}$ derivatives of this factor admit bounds similar to those of the other factors.} Applying \eqref{Faa di Bruno} with $f(\lambda) = \hat{\eta}(2^{j - i}r\partial_{\lambda}\Phi_{\lambda})$ and $h(u) = g(u)$, we use the bounds $|g^{(l)}(u)| \lesssim_{l,\tau} r^{-4\tau l}$ to obtain
\begin{align*} |\partial^{(l)}_{u} \hat{\eta}(2^{j - i}r\partial_{\lambda}\Phi_{g(u)})| & \leq \sum_{\mathbf{m} \in \N^{l}} |b_{\mathbf{m}}||f^{(m_{1} + \ldots + m_{l})}(g(u))| \cdot |g'(u)|^{m_{1}}\cdots|g^{(l)}(u)|^{m_{l}}\\
& \lesssim_{l,\tau} r^{-4\tau l}\left( \sum_{\mathbf{m} \in \N^{l}} |b_{\mathbf{m}}||f^{(m_{1} + \ldots + m_{l})}(g(u))| \right).
\end{align*}
Here, for any $k = m_{1} + \ldots + m_{l} \leq l$ and $\lambda \in I$, we may use \eqref{Faa di Bruno} again, combined with rapid decay bounds of the form $|\hat{\eta}^{(p)}(t)| \lesssim_{N,p} |t|^{-N}$, to estimate
\begin{align*} |f^{(k)}(\lambda)| & \leq \sum_{\mathbf{m} \in \N^{k}} |b_{\mathbf{m}}||\hat{\eta}^{(m_{1} + \ldots + m_{k})}(2^{j - i}r\partial_{\lambda}\Phi_{\lambda})|(2^{j - i}r)^{m_{1} + \ldots + m_{k}}|\partial_{\lambda}^{2}\Phi_{\lambda}|^{m_{1}}\cdots|\partial_{\lambda}^{k + 1}\Phi_{\lambda}|^{m_{k}}\\
& \lesssim_{l,\tau} \sum_{\mathbf{m} \in \N^{k}} |b_{\mathbf{m}}|\cdot|\partial_{\lambda}\Phi_{\lambda}|^{-(m_{1} + \ldots + m_{k})}\cdot r^{-2m_{1}\tau}\cdots r^{-(k + 1)m_{k}\tau}\\
& \lesssim_{d(\Omega)} \sum_{\mathbf{m} \in \N^{k}} |b_{\mathbf{m}}|\cdot |\partial_{\lambda}\Phi_{\lambda}|^{-(m_{1} + \ldots + m_{k})}\cdot r^{-2k\tau}. \end{align*} 
Finally, recalling that $g(u) \in I_{i}$, we have $|\partial_{\lambda}\Phi_{g(u)}| \gtrsim_{\tau,I} r^{\tau}$ by \eqref{transversality}, whence
\begin{displaymath} |f^{(m_{1} + \ldots + m_{l})}(g(u))| \lesssim_{I,l,\tau} r^{-3(m_{1} + \ldots + m_{l})\tau} \lesssim_{l,\tau} r^{-3\tau l}, \end{displaymath}
and so 
\begin{displaymath} |\partial^{(l)}_{u} \hat{\eta}(2^{j - i}r\partial_{\lambda}\Phi_{g(u)})| \lesssim_{I,l,\tau} r^{-7\tau l}. \end{displaymath} 
Now that we have estimated the derivatives of all the factors of $F$ separately, we may conclude from the Leibnitz formula that
\begin{equation}\label{F derivatives} |F^{(l)}(u)| \lesssim_{\gamma,\tau,l} r^{-7\tau l}, \qquad l \geq 1. \end{equation}
For $l = 0$ estimate (\ref{derivatives}) yields $|F(u)| \lesssim_{\gamma,\tau} |g'(u)| \lesssim_{\tau} r^{-\tau}$.

Next we write $F$ as a degree $2(q - 1)$ Taylor polynomial centered at the origin:
\begin{displaymath} F(u) = \sum_{l = 0}^{2(q - 1)} \frac{F^{(l)}(0)}{l!}u^{l} + \calO(F^{(2q - 1)}(u)u^{2q - 1}). \end{displaymath}
Then we split the integration in (\ref{Piece IIIb}) in two for one last time. Denoting $U_{1} = \{u \in \R : |u| < (2^{j}r)^{-1/2}\}$ and $U_{2} = \R \setminus U_{1}$,
\begin{align}\label{Piece V} \int_{\R} F(u)\psi(2^{j}r u) \, du & = \int_{U_{1}} \psi(2^{j}r u) \left[ \sum_{l = 0}^{2(q - 1)} \frac{F^{(l)}(0)}{l!}u^{l} + \calO(F^{(2q - 1)}(u)u^{2q - 1}) \right] \\
&\label{Piece VI} + \int_{U_{2}} \psi(2^{j}r u)F(u) \, du. \end{align} 
To estimate the integral of line (\ref{Piece VI}) we use the bounds $|\psi(2^{j}ru)| \lesssim_{q} (2^{j}r|u|)^{-2q - 1}$, $|F(u)| \lesssim_{\gamma,\tau} r^{-\tau}$ and $2^{j}r > 1$ to obtain
\begin{equation}\label{Piece VII} \left|\int_{U_{2}} \psi(2^{j}r u)F(u) \, du\right| \lesssim_{q,\gamma,\tau} (2^{j}r)^{-2q - 1} r^{-\tau} \int_{(2^{j}r)^{-1/2}}^{\infty} u^{-2q - 1} \lesssim_{q} (2^{j}r)^{-q}r^{-\tau}.  \end{equation}
As regards (\ref{Piece V}), note that the Fourier transform of $x \mapsto x^{l}\psi(x)$ is (a constant multiple of) $i^{l}\hat{\psi}^{(l)}$, whence
\begin{displaymath} \int_{\R} x^{l}\psi(x) \, dx = \int_{\R} x^{l}\psi(x) e^{-ix \cdot 0} \, dx \asymp i^{l}\hat{\psi}^{(l)}(0) = 0, \quad l \geq 0, \end{displaymath}
according to \eqref{form13}. This shows that
\begin{displaymath} \int_{U_{1}} \psi(2^{j}r u) \sum_{l = 0}^{2(q - 1)} \frac{F^{(l)}(0)}{l!} u^{l} \, du = -\sum_{l = 0}^{2(q - 1)} \int_{U_{2}} \psi(2^{j}r u) \frac{F^{(l)}(0)}{l!} u^{l} \, du. \end{displaymath}
The term corresponding to $l = 0$ may be bounded as on line (\ref{Piece VII}). For $l \geq 1$ we apply the estimates $|\psi(2^{j}ru)| \lesssim_{q} (2^{j}ru)^{-2q - l - 1}$, $1 \leq l \leq q$, and (\ref{F derivatives}):
\begin{align*} \left|\sum_{l = 1}^{2(q - 1)} \int_{U_{2}} \psi(2^{j}r u) \frac{F^{(l)}(0)}{l!} u^{l} \, du \right| & \lesssim_{q,\tau} \sum_{l = 1}^{2(q - 1)} \left( r^{-7\tau l}(2^{j}r)^{-2q - l - 1} \int_{(2^{j}r)^{-1/2}}^{\infty} u^{-2q - 1} \, du \right) \\
& \asymp_{q} \sum_{l = 1}^{2(q - 1)} r^{-7\tau l}(2^{j}r)^{-q - l - 1} \lesssim_{q} r^{-7\tau}(2^{j}r)^{-q}. \end{align*}
On the last line, we need not actually perform the summation: just note that the terms are decreasing, since $2^{j}r^{1 + 7\tau} > 1$, and their number is $2(q - 1) \lesssim_{q} 1$. Since also
\begin{align*} \left|\int_{U_{1}} \psi(2^{j}r u)\calO(F^{(2q - 1)}(u)u^{2q - 1}) \, du\right| & \lesssim_{q,\tau} r^{-7\tau(2q - 1)}\int_{0}^{(2^{j}r)^{-1/2}} u^{2q - 1} \, du \\
& \asymp_{q} r^{-7\tau(2q - 1)}(2^{j}r)^{-q}, \end{align*}
we have shown that
\begin{equation}\label{Piece VIII} \left| \int_{U_{1}} \psi(2^{j}r u) \left[ \sum_{l = 0}^{2(q - 1)} \frac{F^{(l)}(0)}{l!}u^{l} + \calO(F^{(2q - 1)}(u)u^{2q - 1}) \right] \right| \lesssim_{q,\tau} r^{-7\tau(2q - 1)}(2^{j}r)^{-q}. \end{equation}
Now we wrap things up. On line \eqref{Piece III} it follows from part (i) of Lemma \ref{Intervals} that the number $N$ of intervals $I_{i}$ meeting the support of $\Gamma$ (or $\gamma$) is bounded above by $N \lesssim_{\gamma,\tau} r^{-2\tau}$. This, together with the estimates (\ref{Piece VII}) and (\ref{Piece VIII}), yields
\begin{align*} \Bigg| \sum_{i = 1}^{N} \int_{\R} \Gamma(\lambda) & \chi(r^{-2\tau}(\lambda - \lambda_{i}))\psi(2^{j}r\Phi_{\lambda})\varphi(\delta_{\tau}^{-1}r^{-\tau}\Phi_{\lambda}) \, d\lambda \Bigg| \\
& \lesssim_{q,\gamma,\tau,d(\Omega)} r^{-2\tau}((2^{j}r)^{-q}r^{-\tau} + r^{-7\tau(2q - 1)}(2^{j}r)^{-q})\\
& \lesssim_{\tau,d(\Omega)} (2^{j}r^{1 + 14\tau})^{-q} \lesssim_{q} (1 + 2^{j}r^{1 + 14\tau})^{-q}. \end{align*}
The last inequality uses assumption $2^{j}r^{1 + 14\tau} \geq 1$. This finishes the proof, since all the finitely many pieces $I$ into which the integral on line (\ref{Projection ineq}) was decomposed have been seen to satisfy $I \lesssim_{q,\gamma,\tau,d(\Omega)} (1 + 2^{j}r^{1 + c\tau})^{-q}$ for some $0 \leq c \leq 14$.

\end{document}